\definecolor{unbleu}{rgb}{0.03, 0.15, 0.4}
\numberwithin{equation}{section}  
\newtheorem{theorem}{Theorem}[section]
\newtheorem{lemma}[theorem]{Lemma}
\newtheorem{proposition}[theorem]{Proposition}
\newtheorem{corollary}[theorem]{Corollary}
\newtheorem{remark}{Remark}[section]
\newtheorem*{theorem*}{Theorem}
\newtheorem*{notations}{Notations}
\newcommand{\cte}[1]{C_{\mathrm{(\ref{#1})}}}
\newcommand{\pcte}[1]{c_{\mathrm{(\ref{#1})}}}
\newcommand{\proba}{{\mathds{P}}}
\newcommand{\E}{{\mathds{E}}}
\newcommand{\real}{\mathds{R}}
\newcommand{\zentiers}{{\mathds{Z}}}
\newcommand{\integers}{\mathds{N}}
\newcommand{\domaine}{{\zentiers}_{\scaleto{+}{5pt}}^{\hspace{-.3pt}\scaleto{d}{5pt}}}
\newcommand{\sdomaine}{{\zentiers}_{\scaleto{+}{3.5pt}}^{\hspace{-.2pt}\scaleto{d}{3.5pt}}}
\newcommand{\rpd}{\mathds{R}_{\scaleto{+}{5pt}}^{\hspace{-.3pt}\scaleto{d}{5pt}}}
\newcommand{\srpd}{\mathds{R}_{\scaleto{+}{3.5pt}}^{\hspace{-.2pt}\scaleto{d}{3.5pt}}}
\newcommand{\Oun}{\mathcal{O}(1)} 
\newcommand{\Thetaun}{\Theta(1)} 
\newcommand{\sK}{{\scriptscriptstyle K}}
\newcommand{\gen}{\mathscr{L}_{\sK}}
\newcommand{\un}{\mathds{1}}
\newcommand{\Sp}{\mathrm{Sp}}
\newcommand{\vecn}{\ushort{n}}
\newcommand{\vecnf}{\ushort{n}^{\scaleto{*}{3pt}}}
\newcommand{\svecnf}{\ushort{n}^{\hspace{-1pt}\scaleto{*}{2.2pt}}}
\newcommand{\vecxf}{\ushort{x}^{\scaleto{*}{3pt}}}
\newcommand{\svecxf}{\ushort{x}^{\scaleto{*}{2pt}}}
\newcommand{\vecx}{\ushort{x}}
\newcommand{\vecy}{\ushort{y}}
\newcommand{\vecz}{\ushort{z}}
\newcommand{\vecv}{\ushort{v}}
\newcommand{\vecm}{\ushort{m}}
\newcommand{\vecX}{\ushort{X}}
\newcommand{\vecB}{\ushort{B}}
\newcommand{\vecD}{\ushort{D}}
\newcommand{\vp}{\ushort{p}}
\newcommand{\vmu}{\ushort{\mu}}
\newcommand{\veczer}{\ushort{0}}
\newcommand{\vece}[1]{\scaleto{\ushort{e}^{(#1)}}{11.5pt}}
\newcommand{\res}{\rho^{\scaleto{*}{2.8pt}}}
\newcommand{\sDelta}{\scaleto{\Delta}{5pt}}
\newcommand{\ssDelta}{\scaleto{\Delta}{3.5pt}}
\newcommand{\leproc}{\ushort{N}^{\hspace{-1pt}{\scriptscriptstyle K}}}
\newcommand{\sleproc}{\ushort{\scaleto{N}{5pt}}^{\hspace{-0.5pt}{\scaleto{K}{3pt}}}}
\newcommand{\nuk}{\nu\!_{\scaleto{K}{3.5pt}}}
\newcommand{\snuk}{\nu\!_{\scaleto{K}{2.8pt}}}
\newcommand{\e}{\operatorname{e}} 
\newcommand{\ii}{\operatorname{i}} 
\newcommand{\dd}{\hspace{1pt}\mathrm{d}}
\newcommand{\norm}[1]{\ensuremath{\scaleto{\|}{9pt}\hspace{.5pt} #1\scaleto{\|}{9pt}_{\scaleto{1}{4.pt}}}}
\newcommand{\snorm}[1]{\ensuremath{\scaleto{\|}{8pt}\hspace{.5pt} #1\scaleto{\|}{8pt}_{\scaleto{1}{3pt}}}} 
\newcommand{\normdeux}[1]{\ensuremath{\scaleto{\|}{9pt}\hspace{.5pt} #1\scaleto{\|}{9pt}_{\scaleto{2}{3.5pt}}}}
\newcommand{\snormdeux}[1]{\ensuremath{\scaleto{\|}{8pt}\hspace{.5pt} #1\scaleto{\|}{8pt}_{\scaleto{2}{3pt}}}} 
\title{Quasi-Stationary Distributions and Resilience: What to get from a sample?
}
\author[1]{J.-R. Chazottes
\thanks{Email: \texttt{chazottes@cpht.polytechnique.fr}}}
\author[1]{P. Collet
\thanks{Email: \texttt{collet@cpht.polytechnique.fr}}}
\author[2]{S. M\'el\'eard
\thanks{Email: \texttt{sylvie.meleard@polytechnique.edu}}}
\author[3]{S. Mart\'{\i}nez
\thanks{Email: \texttt{smartine@dim.uchile.cl}}}
\affil[1]{CPHT, CNRS, Ecole Polytechnique, Institut Polytechnique de Paris, F-91128 Palaiseau, France}
\affil[2]{CMAP, CNRS, Ecole Polytechnique, Institut Polytechnique de Paris, F-91128 Palaiseau, France}
\affil[3]{DIM-CMM, Universidad de Chile, UMI 2807 UChile-CNRS, Beauchef 851, Santiago, Chile}
\begin{document}

\maketitle

\begin{abstract}
We study a class of multi-species birth-and-death processes going almost surely to extinction and 
admitting a unique quasi-stationary distribution (qsd for short). When rescaled by $K$ and in the limit $K\to+\infty$,
the realizations of such processes get close, in any fixed finite-time window, to the trajectories of a dynamical system whose 
vector field is defined by the birth and death rates.
Assuming  this dynamical system has a unique attracting fixed point, we analyzed the behavior of these processes
for finite $K$ and finite times, ``interpolating" between the two limiting regimes just mentioned. 
In the present work, we are mainly interested in the following question:
Observing a realization of the process, can we determine the so-called engineering resilience? 
To answer this question, we establish two relations which intermingle the resilience, which is a 
macroscopic quantity defined for the dynamical system, and the fluctuations of the process, which are 
microscopic quantities. Analogous relations are well known in nonequilibrium statistical mechanics.
To exploit these relations, we need to introduce several estimators which we control for times between $\log K$ 
(time scale to converge to the qsd) and $\exp(K)$ (time scale of mean time to extinction).
\end{abstract}

\bigskip \noindent 
\textbf{Keywords}: birth-and-death process, dynamical system, engineering resilience, quasi-stationary distribution,
fluctuation-dissipation relation, empirical estimators. 

\tableofcontents

\newpage


\textbf{Titre en fran\c{c}ais :}

Distributions quasi-stationnaires et r\'esilience : que peut-on obtenir des donn\'ees ?

\bigskip

\textbf{R\'esum\'e en fran\c{c}ais :}

Nous \'etudions une classe de processus de naissance et mort avec plusieurs esp\`eces dans la situation o\`u
l'extinction est certaine et la distribution quasi-stationnaire est unique. Si on fixe un intervalle de temps fini et qu'on normalise 
les r\'ealisations d'un tel processus par un param\`etre d'\'echelle $K$, elles deviennent arbitrairement proches, dans la limite
$K\to+\infty$, des trajectoires d'un certain syst\`eme dynamique dont le champ de vecteurs est d\'efini \`a partir des taux de 
naissance et mort.
Quand le syst\`eme dynamique admet un seul point fixe attractif, nous avons pr\'ec\'edemment analys\'e le comportement
du processus pour des valeurs de $K$ finies et pour des temps finis, c'est-\`a-dire, le comportement interm\'ediaire entre les 
deux comportements limites \'evoqu\'es ci-dessus.
La question principale qui nous int\'eresse dans le pr\'esent article est la suivante : si on observe une r\'ealisation du
processus, pouvons-nous estimer la r\'esilience au sens de l'ing\'enieur (\emph{engineering resilience}) ?
Pour r\'epondre \`a cette question, nous d\'emontrons deux relations entrem\^elant la r\'esilience, qui est une quantit\'e 
macroscopique d\'efinie pour le syst\`eme dynamique sous-jacent, et les fluctuations du processus, qui sont elles des quantit\'es
microscopiques. De tels genres de relations sont bien connues en m\'ecanique statistique hors d'\'equilibre.
Afin d'exploiter ces relations nous introduisons plusieurs estimateurs empiriques que nous parvenons \`a contr\^oler
pour des temps entre $\log K$, qui est l'\'echelle de temps pour observer la convergence vers la distribution quasi-stationnaire,
et $\exp(K)$, qui est l'\'echelle du temps moyen d'extinction.

\bigskip

\textbf{Mots cl\'es :}

processus de naissance et mort, syst\`emes dynamiques, r\'esilience, distribution quasi-stationnaire, relation de fluctuation-dissipation, estimateurs empiriques.

\newpage


\section{Introduction and main results}\label{sec:intro}

\subsection{Context and setting}

The ability of an ecosystem to return to its reference state after a perturbation stress is given by its resilience, a concept 
pioneered by Holling. Resilience has several faces and multiple definitions \cite{resilience-book}. In the traditional theoretical 
setting of dynamical systems, that is, differential equations, one of them is the so-called {\em engineering resilience}. It 
is concerned with what happens in the vicinity of a fixed point (equilibrium state) of the system, and is given by minus the real 
part of the dominant eigenvalue of the Jacobian matrix evaluated at the fixed point. It can also be defined as the reciprocal of 
the characteristic return time to the fixed point after a (small) perturbation. 
In this paper, we are interested in how to determine the engineering resilience from the data. But which data?
The drawback of the notion of engineering resilience is that we do not observe population densities governed by differential equations. 
Instead, we count individuals which are subject to stochastic fluctuations. Can we nevertheless infer the resilience? The subject of this 
paper is to show that this is possible in the framework of birth-and-death processes which are, in a sense made precise below, 
close to the solutions of a corresponding differential equation, at certain time and population size scales.

Let us now describe our framework. We consider a population made of $d$ species interacting with one another. Suppose that the state of the 
process at time $t$, which we denote by $\leproc(t)=(N_{\scaleto{1}{4.5pt}}^{\sK}(t), \ldots,N_{\scaleto{d}{4.5pt}}^{\sK}(t))$, is $\vecn=(n_{\scaleto{1}{4pt}},
\ldots,n_i,\ldots,n_{\scaleto{d}{4pt}}) \in\domaine$, where $n_i$ is the number of individuals of the $i$th species. Then the rate at which the population increases 
(respectively decreases) by one individual of the $j$th species is $\scaleto{K}{6pt}B_j(\scaleto{\frac{\vecn}{K}}{11pt})$ (respectively $\scaleto{K}{6pt}
D_j(\scaleto{\frac{\vecn}{K}}{11pt})$), where $K$ is a scaling parameter.
Under the assumptions we will make, the process goes extinct, \emph{i.e.}, $\veczer$ is an absorbing state, with probability one.
There are two limiting regimes for the behavior of this process.
The first one is to fix $K$ and let $t$ tend to infinity, which leads inevitably to extinction.
The second one consists in fixing a time horizon and letting $K$ tend to $+\infty$, after having rescaled the process
by $K$. In this limit, the behavior of the rescaled process is governed by a certain differential equation. 
More precisely, given any $0<t_{\scaleto{H}{4pt}}<+\infty$ and any $\varepsilon>0$ and $x_{0}\in \rpd$, we have
\[
\lim_{K\to+\infty} \proba_{\lfloor \sK x_{0}\rfloor}
\left(\,\sup_{0\,\leq t\leq\, t_{\scaleto{H}{3pt}}}\textup{dist}\left( \frac{\leproc(t)}{K},\vecx(t)\right)>\varepsilon\right)=0
\]
where $\textup{dist}(\cdot,\cdot)$ is the Euclidean distance in $\rpd$, and $\vecx(t)$ is the solution of the differential equation in $\rpd$
\[
\frac{\dd\vecx}{\dd t}=\vecB(\vecx)-\vecD(\vecx)
\]
with initial condition $\vecx_0$. We refer to \cite[Chapter 11]{EK} for a proof.
We use the notations $\vecx=(x_{\scaleto{1}{4pt}},\ldots,x_{\scaleto{d}{4pt}})$, $\vecB(\vecx)=(B_{\scaleto{1}{4pt}}(\vecx),\ldots,B_{\scaleto{d}{4pt}}(\vecx))$, 
and so on and so forth. We will make further assumptions (see Subsection \ref{hypo}) on the birth and death rates to be in the following 
situation. 
The vector field 
\[
\vecX=\vecB-\vecD
\]
has a unique attracting fixed point $\vecxf$ (lying in the interior of $\rpd$). We denote by $M^*$ its differential  evaluated at  
$\vecxf$, namely
\[
M^*=D\vecX(\vecxf).
\]
We then define the (engineering) resilience as
\[
\res= - \sup\{ \mathrm{Re} (z): z\in\Sp(M^*)\}
\]
where $\mathrm{Sp}\big(M^*\big)$ denotes the spectrum (set of eigenvalues) of the matrix $M^*$.
Under our assumptions, we have $\res>0$.

We can now formulate more precisely the goal of this paper. 
Given a \emph{finite-length} realization of the process $(\leproc(t), t\leq T)$, with large, but \emph{finite} $K$, we want to build an 
estimator for $\res$.
To this end, we need a good understanding of the behavior of the Markov process $(\leproc(t))$ in an intermediate regime between the 
two limiting regimes described above. This was done in a previous work of ours \cite{ccm2}, and this can be roughly summarized as follows.
All states $\vecn\neq \veczer$ are transient and $\veczer$ is absorbing, hence the only stationary distribution is the Dirac measure sitting at
$\veczer$. The mean time to extinction behaves like $\exp(\Theta(K))$. (We recall Bachmann-Landau notations below.)
If we start in the vicinity of the state $\vecnf = \lfloor K\vecxf\rfloor$, that is, if the initial state has its coordinates of size of order $K$,
then either the process wanders around $\vecnf$ or it gets absorbed at $\veczer$. 
More precisely, there is a unique quasi-stationary distribution (qsd, for short) $\nuk$ which describes the statistics of the process conditioned 
to be non-extinct before time $t$. Without this conditionning, the law of the process at time $t$ is well approximated by a mixture of the 
Dirac measure at $\veczer$ and the qsd $\nuk$, for times $t\in\left[ c K\log K, \exp(\Theta(K))\right]$, where $c>0$, in the sense that the total variation 
distance between them is exponentially small in $K$, provided that $K$ is large enough.
We will rely on these results that will be recalled precisely later on. We will also need to prove further properties.

\subsection{Main results}\label{subsec:main-results}

To estimate the engineering resilience $\res$, we will establish a matrix relation involving $M^*$. 
Let $\vmu^{\sK}=(\mu_1^{\sK},\ldots,\mu_d^{\sK})$ be the vector of species sizes averaged with respect to $\nuk$, that is,
\begin{equation}
\label{def-mup}
\mu_{p}^{\sK}=\int n_{p}\,\dd\nuk(\vecn)\,,\; p=1,\ldots,d.
\end{equation}
For each $\tau\geq 0$, define the matrix 
\[
\Sigma^{\sK}_{p,q}(\tau)=
\E_{\snuk}\!\left[\big(N_{p}^{\sK}(\tau)-\mu_{p}^{\sK}\big)\big(N_{q}^{\sK}(0)-\mu_{q}^{\sK}\big)\right],\;p,q \in \{1, \ldots, d\}.
\]
In Section \ref{proof:onsager}, we will prove the following result.
\begin{theorem}
\label{thm-onsager-relation}
For all $\tau\geq 0$ we have
\begin{equation}\label{onsager}
\Sigma^{\sK}(\tau)=\e^{\tau M^*}\Sigma^{\sK}(0)+\mathcal{O}\big(\sqrt{K}\,\big).
\end{equation}
\end{theorem}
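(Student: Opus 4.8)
The plan is to set up a differential equation in $\tau$ for the matrix-valued correlation function $\Sigma^{\sK}(\tau)$ and show it is, up to a controlled error, the linear equation $\frac{\dd}{\dd\tau}\Sigma^{\sK}(\tau) = M^* \Sigma^{\sK}(\tau)$. First I would fix the column index $q$ and view $\tau \mapsto \E_{\snuk}[(N^{\sK}_p(\tau) - \mu^{\sK}_p)(N^{\sK}_q(0) - \mu^{\sK}_q)]$ as a function of $\tau$ for each $p$. Writing $\gen$ for the infinitesimal generator of the rescaled-rate birth-and-death process and conditioning on $\mathcal F_0$, the Markov property and Dynkin's formula give
\[
\frac{\dd}{\dd\tau}\,\E_{\snuk}\!\big[f(\leproc(\tau))\,(N^{\sK}_q(0)-\mu^{\sK}_q)\big]
= \E_{\snuk}\!\big[(\gen f)(\leproc(\tau))\,(N^{\sK}_q(0)-\mu^{\sK}_q)\big]
\]
for $f(\vecn) = n_p - \mu^{\sK}_p$. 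The key computation is $\gen(n_p) = K\big(B_p(\tfrac{\vecn}{K}) - D_p(\tfrac{\vecn}{K})\big) = K X_p(\tfrac{\vecn}{K})$, so that the right-hand side becomes $\E_{\snuk}[\,K X_p(\leproc(\tau)/K)\,(N^{\sK}_q(0)-\mu^{\sK}_q)\,]$.

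Next I would Taylor-expand $X_p$ around $\vecxf$. Since under $\nuk$ the process stays (with overwhelming probability, by the results of \cite{ccm2} recalled above) within $\mathcal{O}(\sqrt K)$ of $\vecnf = \lfloor K\vecxf\rfloor$, one writes
\[
K X_p\!\big(\tfrac{\leproc(\tau)}{K}\big) = K X_p(\vecxf) + \sum_{r=1}^d M^*_{p,r}\,\big(N^{\sK}_r(\tau) - K x^*_r\big) + \mathcal{O}\!\big(\tfrac{1}{K}\,\|\leproc(\tau) - \vecnf\|^2\big).
\]
The constant term $K X_p(\vecxf)$ vanishes because $\vecxf$ is a fixed point of $\vecX$. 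Replacing $K x^*_r$ by $\mu^{\sK}_r$ costs only $\mathcal O(1)$ (the bias $\mu^{\sK}_r - K x^*_r$ is $\mathcal O(1)$ under our assumptions — this is exactly the kind of control on the qsd established previously, and should be recalled or re-derived). The quadratic remainder, after multiplying by the centered, bounded-in-$L^2$ factor $(N^{\sK}_q(0)-\mu^{\sK}_q)$ and taking expectations, is $\mathcal O(1)$ by Cauchy–Schwarz together with the moment bounds $\E_{\snuk}[\|\leproc-\vecnf\|^k] = \mathcal O(K^{k/2})$. Altogether this yields
\[
\frac{\dd}{\dd\tau}\,\Sigma^{\sK}_{p,q}(\tau) = \sum_{r=1}^d M^*_{p,r}\,\Sigma^{\sK}_{r,q}(\tau) + \mathcal{R}^{\sK}_{p,q}(\tau), \qquad \mathcal{R}^{\sK}(\tau) = \mathcal O(1)
\]
uniformly in $\tau$ on the relevant time range. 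Integrating this linear ODE via the variation-of-constants formula gives $\Sigma^{\sK}(\tau) = \e^{\tau M^*}\Sigma^{\sK}(0) + \int_0^\tau \e^{(\tau-s)M^*}\mathcal{R}^{\sK}(s)\,\dd s$, and since $\res > 0$ the matrix exponential $\e^{(\tau-s)M^*}$ is bounded (indeed contracting) for $s \le \tau$, so the integral term is $\mathcal O(1)$, hence certainly $\mathcal{O}(\sqrt K)$.

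The main obstacle is making rigorous the passage from the process to its linearization: one must control $\E_{\snuk}[\,\|\leproc(\tau) - \vecnf\|^2\,(N^{\sK}_q(0)-\mu^{\sK}_q)\,]$ uniformly in $\tau$. This requires (i) a uniform-in-$\tau$ second-moment (and ideally fourth-moment, for the Cauchy–Schwarz step) bound on $\|\leproc(\tau) - \vecnf\|$ under the quasi-stationary law — which follows from stationarity of $\nuk$ for the conditioned semigroup, provided we are careful that conditioning on non-extinction does not spoil the moment bounds on the time scales considered — and (ii) control of the bias $\mu^{\sK} - K\vecxf = \mathcal O(1)$ and of $\Sigma^{\sK}(0) = \mathcal O(K)$. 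A secondary technical point is that $X_p$ is only assumed smooth, so the Taylor remainder must be handled on the high-probability event $\{\|\leproc(\tau)-\vecnf\| \le K^{2/3}\}$, say, with the complementary event contributing a super-polynomially small (in $K$) correction by the large-deviation-type estimates of \cite{ccm2}. Once these moment and concentration estimates are in hand — several of which we will have to state and prove as preliminary lemmas — the rest is the elementary ODE argument sketched above.
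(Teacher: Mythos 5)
Your proposal follows essentially the same route as the paper: differentiate the correlation matrix in $\tau$ via the generator, Taylor-expand $\vecX$ around $\vecxf$ (restricting to a high-probability neighbourhood of $\vecnf$ and using the qsd moment bounds of Theorem \ref{bornemom} for the remainder), and integrate the resulting linear ODE by variation of constants using the stability of $M^*$; the paper merely centers at $\vecnf$ first and passes to $\vmu^{\sK}$ at the end via Proposition \ref{moyenne}. One small miscount: the factor $N_q^{\sK}(0)-\mu_q^{\sK}$ has $L^2(\nuk)$-norm $\Theta(\sqrt{K})$ (Proposition \ref{borninf}), not $\Oun$, so your remainder $\mathcal{R}^{\sK}(\tau)$ is $\mathcal{O}(\sqrt{K})$ rather than $\Oun$ — which is still exactly what is needed for the stated conclusion.
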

Some comments are in order. If $\tau$ is equal to, say, $1/K$ then the estimate becomes useless. More generally, if $\tau$ is too small then
$\e^{\tau M^*}$ is too close to the identity matrix.  
Moreover, we will show later on that $\vmu^{\sK}$ and $\Sigma^{\sK}(\tau)$ are of order $K$. Hence the estimate becomes irrelevant 
if $\tau$ becomes proportional to $\log K$. Indeed, without knowing the constant of proportionality, $\e^{\tau M^*}\Sigma^{\sK}(0)$ can be of the same 
order than the error term.

Before proceeding further, we recall the following classical Bachmann-Landau notations.
\begin{notations}
Given $a\in\real$, the symbol $\mathcal{O}(K^a)$ stands for any real-valued function $f(K)$ such that 
there exists $C>0$ and $K_0>0$ such that, for any $K>K_0$, $|f(K)|\leq C K^a$. Note in particular that $\Oun$ will always mean a strictly positiveconstant that we don't want to specify.
Sometimes, we will also use the symbol $\Theta(K^a)$ stands for any real-valued function $f(K)$ such that there exist $C_1,C_2>0$ and $K_0>0$ 
such that, for any $K>K_0$, $C_1 K^a\leq f(K)\leq C_2 K^a$. One can naturally generalize $\Theta(K^a)$ to vector-valued functions. For instance, for $
\vecn\in\rpd$ we write $\vecn=\Theta(K^a)$ if $n_i=\Theta(K^a)$ for $i=1,\ldots,d$.
\end{notations}
Relation \eqref{onsager} allows to determine $M^*$. Indeed, we have
\begin{equation}\label{matrix-onsager}
\e^{\tau M^*} = \Sigma^{\sK}(\tau)\,\Sigma^{\sK}(0)^{-1} + \mathcal{O}\left(\frac{1}{\sqrt{K}}\right).
\end{equation}
This formula suggests that in order to estimate $M^*$, we need estimators for  $\Sigma^{\sK}(0)$ and  $\Sigma^{\sK}(\tau)$.
Given a finite-length realization of $\big(\leproc(t), 0\le t\le T\big)$ up to some time $T>0$, we define  estimators for 
$\mu_{p}^{\sK}$ and $\Sigma^{\sK}_{p,q}(\tau)$, 
for $0\le \tau<T$, $p,q\in\{1,\ldots, d\}$, $K\in \mathbb{N}^*$ by 
\begin{align}
\label{stat-mu}
& S^{{\scriptscriptstyle\vmu}}_{p}(T,K) =\frac{1}{T}\int_{0}^{T}N_{p}^{\sK}(s) \dd s
\end{align}
and
\begin{align}
& S^{{\scriptscriptstyle C}}_{p,q}(T,\tau,K)  =\nonumber\\
\label{stat-sigma}
& \qquad \frac{1}{T\!-\!\tau}\int_{0}^{T-\tau}\!\!\big(N_{p}^{\sK}(s+\tau)\!-\!S^{{\scriptscriptstyle\vmu}}_{p}(T,K)\big)
\big(N_{q}^{\sK}(s)\!-\!S^{{\scriptscriptstyle\vmu}}_{q}(T,K)\big)\dd s.
\end{align}
Under suitable conditions on $\vecn$, $K$ and $T$, $S^{{\scriptscriptstyle\vmu}}(T,K)$ well approximates $\vmu^{\sK}$.
More precisely, we will prove an estimate of the following form (see Theorem \ref{pseudoergod} for a precise statement)
\begin{align}
\nonumber
& \left|\E_{\vecn}\big[S^{{\scriptscriptstyle\vmu}}_{p}(T,K)\big]-\mu_p^{\sK}\right| \leq \\
\label{control-est-mu}
& \qquad C\big(K+\norm{\vecn}\big)\left(\frac{1+\log K}{T} + \e^{-c\,(\scaleto{\|}{7pt}\vecn\scaleto{\|}{7pt}_{\scaleto{1}{3pt}}\wedge K)}+\, T\e^{-\,c' K} \right)
\end{align}
for every $\vecn\in\domaine$, $p=1,\ldots,d$, where $c,c',C$ are positive constants. We use the notation
$\norm{\vecn}=\sum_{i=1}^d n_i$.
Let us comment on this bound. 
Roughly speaking, it can only be useful if $T$ is much smaller than $\exp(\Oun K)$ if $\vecn$ is, say, of order $K$. 
For instance, suppose that, for $K$ large enough, we want the bound to be $\Theta(K^{-a})$, for some $a>0$. One can check that this is possible if
$\vecn=\Theta(K)$ and $T=\Theta(K^{a+1} \log K)$. (Note in particular that, in this situation, we have a consistent estimator when $K\to+\infty$.)
However, when $T$ becomes $\exp(\Oun K)$ or larger, we know that $\E_{\vecn}\big[S^{{\scriptscriptstyle\vmu}}_{p}(T,K)\big]\approx 0$, because with high 
probability, at this time scale the process is absorbed at $\veczer$. This is the manifestation of the fact that the only stationary distribution is the Dirac 
measure at $\veczer$. Consistently, our bound becomes very bad in that regime.\newline
An estimate of the same kind holds for $S^{{\scriptscriptstyle C}}(T,\tau,K)$ which well approximates $\Sigma^{\sK}(\tau)$ in the appropriate ranges.

\begin{remark} It is possible to use discrete time  instead of continuous time in the above averages. Indeed the key results (in particular Proposition 
\ref{pre-theorem-ergodic}) are obtained for discrete times.
\end{remark} 

We can now define the empirical matrix $M^{*}_{\scriptscriptstyle{\mathrm{emp}}}(T,\tau,K)$  by
\[
\e^{\tau M^{*}_{\scriptscriptstyle{\mathrm{emp}}}(T,\tau,K)}=S^{{\scriptscriptstyle C}}(T,\tau,K)\,S^{{\scriptscriptstyle C}}(T,0,K)^{-1}.
\]
We will see later on that, in appropriate regimes,  $S^{{\scriptscriptstyle C}}(T,0,K)$ is near $\Sigma^K(0)$ and $S^{{\scriptscriptstyle C}}(T,\tau,K)$ is near $\Sigma^K(\tau)$ 
(see Propositions \ref{vs2} and \ref{vs4}). The matrix $\Sigma^K(0)$ is invertible as a covariance matrix of a non-constant vector and is $\Theta(K)$ (see Proposition 
\ref{borninf}). Then  \eqref{onsager} implies that $\Sigma^K(\tau)$ is invertible and the same holds for $S^{{\scriptscriptstyle C}}(T,\tau,K)$. These remarks imply 
that the matrix  $M^{*}_{\scriptscriptstyle{\mathrm{emp}}}$ is well defined. 

We define  the empirical resilience by  
\[
\rho^{\scaleto{*}{3pt}}_{\scriptscriptstyle{\mathrm{emp}}}(T,\tau,K)=
- \sup\big\{ \operatorname{Re} (z): z\in\mathrm{Sp}\big(M^{*}_{\scriptscriptstyle{\mathrm{emp}}}(T,\tau,K)\big)\big\}.
\]
Our main result (Theorem \ref{rhoemp}) is then the following. 
\begin{theorem*}
For $\tau=\Theta(1)$, $\vecn=\Theta(K)$ (initial state) and $0<T\ll \exp(\Thetaun K)$,
and $K$ large enough, we have
\[
\big|\rho^{\scaleto{*}{2.8pt}}_{\scriptscriptstyle{\mathrm{emp}}}(T,\tau,K)-\res\big|\le
\Oun \left( \frac{K^2}{\sqrt{T}} +\frac{1}{\sqrt{K}}\right)
\]
with a probability larger than $1-1/K$. In particular, if $T\geq K^5$, we have
\[
\big|\rho^{\scaleto{*}{2.8pt}}_{\scriptscriptstyle{\mathrm{emp}}}(T,\tau,K)-\res\big|\leq \frac{\Oun}{\sqrt{K}}.
\]
\end{theorem*}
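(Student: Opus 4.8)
The plan is to reduce the estimate, in three steps, to results already at hand. First I would combine the deterministic matrix identity \eqref{matrix-onsager} with the concentration of the empirical correlation matrices to get a high-probability bound on $\big\|\e^{\tau M^{*}_{\scriptscriptstyle\mathrm{emp}}}-\e^{\tau M^*}\big\|$. Then I would turn this into a bound on $\|M^{*}_{\scriptscriptstyle\mathrm{emp}}-M^*\|$ by inverting the exponential (matrix logarithm), which is licit because $\tau=\Thetaun$ and $M^*$ is a \emph{fixed} matrix, independent of $K$, with eigenvalues of size $\Thetaun$. Finally I would pass from $\|M^{*}_{\scriptscriptstyle\mathrm{emp}}-M^*\|$ to $\big|\res_{\scriptscriptstyle\mathrm{emp}}(T,\tau,K)-\res\big|$ by eigenvalue perturbation. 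The last assertion of the theorem is then immediate, since $T\ge K^5$ forces $K^2/\sqrt T\le 1/\sqrt K$.

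\textbf{Step 1: the matrix estimate.} The deterministic inputs are: by Theorem~\ref{thm-onsager-relation}, $\e^{\tau M^*}=\Sigma^{\sK}(\tau)\,\Sigma^{\sK}(0)^{-1}+\Oun\big(1/\sqrt K\big)$; by Proposition~\ref{borninf}, $\Sigma^{\sK}(0)$ is invertible with $\Sigma^{\sK}(0)=\Theta(K)$, hence $\big\|\Sigma^{\sK}(0)^{-1}\big\|=\Oun\big(1/K\big)$, and by \eqref{onsager} also $\big\|\Sigma^{\sK}(\tau)\big\|=\Oun(K)$. The probabilistic input is the concentration of the estimators \eqref{stat-mu}--\eqref{stat-sigma}: starting from the pseudo-ergodic control behind \eqref{control-est-mu} (Theorem~\ref{pseudoergod}) together with a second-moment bound for $S^{{\scriptscriptstyle C}}$, Propositions~\ref{vs2} and~\ref{vs4} yield, for $\vecn=\Theta(K)$, $\tau=\Thetaun$ and $T$ in the stated range, that $\big\|S^{{\scriptscriptstyle C}}(T,0,K)-\Sigma^{\sK}(0)\big\|$ and $\big\|S^{{\scriptscriptstyle C}}(T,\tau,K)-\Sigma^{\sK}(\tau)\big\|$ are $\Oun\big(K^{3}/\sqrt T\big)$ with probability $\ge 1-1/K$ (a Chebyshev/Markov estimate entry by entry, then a union bound over the $d^2$ entries, the dimension $d$ being absorbed in the constant). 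On that event, once the deviations are $\ll K$ (i.e. $T\gg K^4$, which covers the range of interest), one has the resolvent identity
\[
S^{{\scriptscriptstyle C}}(T,0,K)^{-1}-\Sigma^{\sK}(0)^{-1}=-\,S^{{\scriptscriptstyle C}}(T,0,K)^{-1}\,\big(S^{{\scriptscriptstyle C}}(T,0,K)-\Sigma^{\sK}(0)\big)\,\Sigma^{\sK}(0)^{-1},
\]
whose right-hand side is $\Oun\big(1/K\big)\cdot\Oun\big(K^{3}/\sqrt T\big)\cdot\Oun\big(1/K\big)=\Oun\big(K/\sqrt T\big)$; combined with $\big\|S^{{\scriptscriptstyle C}}(T,\tau,K)\big\|=\Oun(K)$ and the splitting $S^{{\scriptscriptstyle C}}(T,\tau,K)S^{{\scriptscriptstyle C}}(T,0,K)^{-1}-\Sigma^{\sK}(\tau)\Sigma^{\sK}(0)^{-1}=\big(S^{{\scriptscriptstyle C}}(T,\tau,K)-\Sigma^{\sK}(\tau)\big)\Sigma^{\sK}(0)^{-1}+S^{{\scriptscriptstyle C}}(T,\tau,K)\big(S^{{\scriptscriptstyle C}}(T,0,K)^{-1}-\Sigma^{\sK}(0)^{-1}\big)$, this gives a bound $\Oun\big(K^{2}/\sqrt T\big)$. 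Adding the $\Oun\big(1/\sqrt K\big)$ from \eqref{matrix-onsager} and recalling that by definition $\e^{\tau M^{*}_{\scriptscriptstyle\mathrm{emp}}}=S^{{\scriptscriptstyle C}}(T,\tau,K)S^{{\scriptscriptstyle C}}(T,0,K)^{-1}$, we obtain $\big\|\e^{\tau M^{*}_{\scriptscriptstyle\mathrm{emp}}}-\e^{\tau M^*}\big\|\le\Oun\big(K^{2}/\sqrt T+1/\sqrt K\big)$ on the good event.

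\textbf{Steps 2 and 3: from the exponential to the resilience.} These are soft, because all matrices in sight are $o(1)$ perturbations of the $K$-independent matrices $\e^{\tau M^*}$ and $M^*$. Since $\tau=\Thetaun$, $\e^{\tau M^*}$ is invertible with spectrum bounded away from the branch cut of the principal logarithm, so $\log$ is real-analytic, in particular Lipschitz with a $\Thetaun$ constant, on a fixed neighbourhood of $\e^{\tau M^*}$; as the deviation of Step~1 is $o(1)$ throughout the regime $K^2\ll\sqrt T$ (hence for $T\ge K^5$), $\e^{\tau M^{*}_{\scriptscriptstyle\mathrm{emp}}}$ lies in that neighbourhood on the good event, $M^{*}_{\scriptscriptstyle\mathrm{emp}}=\tfrac1\tau\log\big(S^{{\scriptscriptstyle C}}(T,\tau,K)S^{{\scriptscriptstyle C}}(T,0,K)^{-1}\big)$ is the distinguished real logarithm near $M^*$, and $\|M^{*}_{\scriptscriptstyle\mathrm{emp}}-M^*\|\le\Oun\big(K^{2}/\sqrt T+1/\sqrt K\big)$. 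Finally, the map $A\mapsto-\sup\{\mathrm{Re}\,z:z\in\Sp(A)\}$ is continuous and, near the fixed matrix $M^*$, Lipschitz with a $\Thetaun$ constant (Bauer--Fike when $M^*$ is diagonalizable; in general because the perturbation is small and the dominant eigenvalue stays controlled), whence $\big|\res_{\scriptscriptstyle\mathrm{emp}}(T,\tau,K)-\res\big|\le\Oun\big(K^{2}/\sqrt T+1/\sqrt K\big)$ on an event of probability $\ge 1-1/K$; the special case follows from $K^2/\sqrt T\le 1/\sqrt K$ when $T\ge K^5$.

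\textbf{Main obstacle.} The genuine work sits in Step~1, and within it in the concentration of the empirical covariance $S^{{\scriptscriptstyle C}}$ of \eqref{stat-sigma}: the process $\leproc$ is only approximately distributed as the quasi-stationary distribution, only approximately time-stationary, non-reversible, and its law leaks towards $\veczer$ on the scale $\exp(\Thetaun K)$; bounding the variance of the double time-integral in \eqref{stat-sigma}, and thereby pinning down the exponents of $K$, rests on the decay-of-correlations / pseudo-ergodic estimates underlying \eqref{control-est-mu} (Theorem~\ref{pseudoergod}, Proposition~\ref{pre-theorem-ergodic}), and it is precisely there that the hypothesis $T\ll\exp(\Thetaun K)$ must be used, so that the mass already escaped to extinction is negligible. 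Once Step~1 is in place, Steps~2 and~3 are routine perturbation theory for a fixed matrix.
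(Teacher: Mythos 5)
Your proposal is correct and follows essentially the same route as the paper: Chebyshev applied to the variance bounds of Propositions \ref{vs2} and \ref{vs4} gives the $\Oun(K^3/\sqrt{T})$ concentration of $S^{{\scriptscriptstyle C}}(T,0,K)$ and $S^{{\scriptscriptstyle C}}(T,\tau,K)$ with probability $1-1/K$, which combined with Theorem \ref{thm-onsager-relation} and the lower bound $\Sigma^{\sK}\ge \pcte{borninf}K\,\mathrm{Id}$ of Proposition \ref{borninf} yields $\|\e^{\tau M^{*}_{\scriptscriptstyle\mathrm{emp}}}-\e^{\tau M^{*}}\|\le\Oun(K^2/\sqrt{T}+1/\sqrt{K})$, exactly as in the paper. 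Your Steps 2 and 3 merely spell out (somewhat more carefully than the paper's one-line ``the result follows since $\tau$ is of order one'') the passage from the matrix exponential to the spectral abscissa, including the caveat about non-diagonalizable $M^*$ that the paper leaves implicit.
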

Several comments are in order.
The dependence on the initial state $\vecn$ is somewhat hidden and involved in the fact that the estimates hold ``with a probability larger than $1-1/K$''.
Indeed, the estimate of this probability results from Chebychev inequality and variance estimates in which the process is started in
$\vecn$.
What the symbol $\ll$ precisely means is not mathematically defined. It means that we need to consider $T$ ``much smaller than 
something exponentially big in $K$''. Indeed, since we do not control explicitly the various constants appearing in exponential terms in $K$, we have to 
consider $T$ which varies on a scale smaller than $\exp(\Theta(1)K)$, for instance $\exp\big(\Theta(1)\sqrt{K}\,\big)$. The 
reader is invited to step through the proof of Theorem \ref{rhoemp} for the more precise, but cumbersome bound we obtain.

\subsection{A ``fluctuation-dissipation'' approach}

The above estimator for the engineering resilience, based on \eqref{matrix-onsager}, is valid for any $d$. 
In the case $d=1$ (only one species), we  have another, simpler, estimator based on a ``fluctuation-dissipation relation''.
This relation is in fact true for any $d$ and of independent interest. 
Let $\EuScript{D}^{\sK}$ be the $d\times d$ diagonal matrix given by
\[
\EuScript{D}^{\sK}_{p,p}=K B_{p}(\vecxf)=K D_{p}(\vecxf).
\]
We have the following result. We write $\Sigma^{\sK}$ instead of $\Sigma^{\sK}(0)$, and the transpose of a matrix $M$ is denoted
by $M^{\intercal}$.
\begin{theorem}\label{thm-Kubo}
We have
\begin{equation}\label{kuboeq}
M^*\Sigma^{\sK}+\Sigma^{\sK}{M^*}^{\intercal}+2\EuScript{D}^{\sK}=\mathcal{O}\big(\sqrt{K}\,\big).
\end{equation}
\end{theorem}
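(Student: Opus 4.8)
The plan is to derive \eqref{kuboeq} as the stationary ``variance equation'' for the quasi-stationary process, treating it essentially as a diffusion-approximation identity with controlled errors. The starting point is the generator $\gen$ of the Markov process $(\leproc(t))$ acting on the quadratic test functions $\varphi_{p,q}(\vecn)=(n_p-\mu_p^{\sK})(n_q-\mu_q^{\sK})$. Using that $\nuk$ is the quasi-stationary distribution with decay rate, say, $\theta_K>0$ (so that $\E_{\snuk}[\gen f]=-\theta_K\,\E_{\snuk}[f]$ for suitable $f$), I would apply this to $f=\varphi_{p,q}$. The point is that $\theta_K=\exp(-\Theta(K))$ is super-polynomially small, so the term $\theta_K\,\E_{\snuk}[\varphi_{p,q}]$, with $\E_{\snuk}[\varphi_{p,q}]=\Sigma^{\sK}_{p,q}(0)=\Theta(K)$, is negligible compared to the $\mathcal{O}(\sqrt K)$ target — in fact it is $o(1)$. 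Hence $\E_{\snuk}[\gen\varphi_{p,q}]=\mathcal{O}(\sqrt K)$ (indeed much smaller), and the whole content is to compute $\gen\varphi_{p,q}$ explicitly and identify the leading terms.

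Next I would carry out the explicit computation of $\gen\varphi_{p,q}$. Writing $\vece{j}$ for the $j$th unit vector, the generator acts by
\[
\gen f(\vecn)=\sum_{j=1}^d K B_j\!\big(\tfrac{\vecn}{K}\big)\big(f(\vecn+\vece{j})-f(\vecn)\big)+\sum_{j=1}^d K D_j\!\big(\tfrac{\vecn}{K}\big)\big(f(\vecn-\vece{j})-f(\vecn)\big).
\]
For $f=\varphi_{p,q}$ the first-order (``drift'') part produces, after expansion, a term of the form $\sum_j K(B_j-D_j)(\tfrac{\vecn}{K})\,\partial_j\varphi_{p,q}(\vecn)$, i.e. $K\,X_j(\tfrac{\vecn}{K})$ times $[(n_q-\mu_q^{\sK})\delta_{jp}+(n_p-\mu_p^{\sK})\delta_{jq}]$; the second-order (``diffusion'') part produces $\sum_j K(B_j+D_j)(\tfrac{\vecn}{K})\,\delta_{jp}\delta_{jq}$, which for $p=q$ is exactly $2KB_p(\tfrac{\vecn}{K})$ (using $B_p=D_p$ at the relevant scale) and vanishes for $p\neq q$ — this is what gives rise to the diagonal matrix $\EuScript{D}^{\sK}$. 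Then I would linearize $X_j(\tfrac{\vecn}{K})$ around $\vecxf$: since $\vecn$ is concentrated around $\vecnf=\lfloor K\vecxf\rfloor$ under $\nuk$ at scale $\sqrt K$ (this concentration is exactly the kind of estimate recalled/proved from \cite{ccm2}), one writes $K X_j(\tfrac{\vecn}{K})=K X_j(\vecxf)+\sum_k M^*_{jk}(n_k-K x^*_k)+(\text{quadratic remainder})$, and $X_j(\vecxf)=0$. Replacing $Kx^*_k$ by $\mu_k^{\sK}$ costs only $\mathcal{O}(1)$ (another consequence of the concentration estimates, $\mu_k^{\sK}=Kx^*_k+\mathcal{O}(1)$). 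Taking $\E_{\snuk}$ of the resulting expression, the linear-in-$M^*$ drift term becomes $\sum_k M^*_{pk}\Sigma^{\sK}_{kq}(0)+\sum_k M^*_{qk}\Sigma^{\sK}_{kp}(0)=(M^*\Sigma^{\sK})_{pq}+(\Sigma^{\sK}{M^*}^{\intercal})_{pq}$, the diffusion term gives $2\EuScript{D}^{\sK}_{pq}$, and collecting everything yields \eqref{kuboeq} once all remainders are shown to be $\mathcal{O}(\sqrt K)$.

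The main obstacle — and where the $\sqrt K$ on the right-hand side comes from — is controlling the remainder terms under $\E_{\snuk}$. There are two sources. First, the Taylor remainder in linearizing $X_j$: it is $\mathcal{O}(K\cdot|\tfrac{\vecn}{K}-\vecxf|^2)=\mathcal{O}(K^{-1}|\vecn-\vecnf|^2)$ times a linear factor $(n_p-\mu_p^{\sK})$, so after multiplying and taking expectations one needs a bound on $\E_{\snuk}[\,|\vecn-\vecnf|^2\,|n_p-\mu_p^{\sK}|\,]$; since the fluctuations are $\Theta(\sqrt K)$, this is of order $K^{-1}\cdot K\cdot\sqrt K=\sqrt K$. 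Making this rigorous requires moment bounds of order three (or a bit more) on $\vecn-\vecnf$ under $\nuk$, uniformly in $K$ — these should follow from the Gaussian-type concentration / sub-exponential tail estimates for $\nuk$ established in the companion paper, but one must check they are available in the needed form, and this is the step I expect to demand the most care. Second, there is the lattice/rounding discrepancy between $\vecnf$ and $K\vecxf$ and between $B_p(\vecxf)$ and $B_p(\tfrac{\vecnf}{K})$, all of which are $\mathcal{O}(1)$ and hence harmless. I would organize the write-up so that all error contributions are bounded by a single clean moment estimate of the form $\E_{\snuk}\big[\snorm{\vecn-\vecnf}^3\big]=\mathcal{O}(K^{3/2})$, cite it from the earlier results, and conclude.
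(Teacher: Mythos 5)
Your proposal follows essentially the same route as the paper's proof: apply the quasi-stationarity identity $\nuk(\gen f)=-\lambda_0(K)\,\nuk(f)$ to quadratic observables, note that the right-hand side is exponentially small, compute $\gen$ explicitly to isolate the drift and diagonal ``diffusion'' contributions, Taylor-expand the rates around $\vecxf$, and control the remainders via the moment bounds on $\nuk$ (Theorem \ref{bornemom} and Corollary \ref{bornepuissance}, after splitting the integrals over $\mathcal{D}$ and $\mathcal{D}^c$). The one small imprecision is that the dominant error in the diffusion term is not the $\mathcal{O}(1)$ rounding discrepancy but the first-order Taylor term $K\,\nabla(B_p+D_p)(\vecxf)\cdot(\vecn/K-\vecxf)$, whose expectation under $\nuk$ is of order $\sqrt{K}$ — still within the stated tolerance, so the argument stands.
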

This relation is proved in Section \ref{proof:kuboeq}. For background on fluctuation-dissipation relations in Statistical 
Physics, we refer to \cite[sections 2-3]{kubo}. Note that the matrix $\Sigma^{\sK}$ is symmetrical, but in general the matrix 
$M^*$ is not (see 
\cite{ccm2}). Note also that each term in the left-hand side of \eqref{kuboeq} is of order $K$, as we will see below.

If $\Sigma^{\sK}$ and $\EuScript{D}^{\sK}$ are known, the matrix $M^*$ is not  uniquely defined, except 
for $d=1$ (see  for example \cite{sim}).
For $d=1$, \eqref{kuboeq} easily gives the resilience since it becomes a scalar equation: 
\[
\res = \frac{K(B(\vecxf)+D(\vecxf))}{2 \Sigma^{\sK}} +\mathcal{O}\left(\frac{1}{\sqrt{K}}\right).
\]
\begin{remark}
The quantity $K(B(\vecxf)+D(\vecxf)) = 2K B(\vecxf)$ is the average total jump rate $K \nuk(B(\vecn/K) +D(\vecn/K))$ up to $\Oun$ terms.
This follows from a Taylor expansion of $B(\vecn/K) +D(\vecn/K)$ around $\vecxf$, Theorem \ref{bornemom} and Proposition \ref{moyenne}.
\end{remark}
In the case $d=1$, an estimator for $\EuScript{D}^{\sK}$  is
\begin{equation}
\label{stat-D}
S^{{\scriptscriptstyle\EuScript{D}}}(T,K)=\frac{1}{T}\big(\mathrm{number\; of \; births\;up\; to\; time}\;T\big).
\end{equation}
In Section \ref{sec:stat}, we establish a bound for
\[
\left|\E_{\vecn}\big[S^{{\scriptscriptstyle\EuScript{D}}}(T,K)\big]- K B(\vecxf)\right|
\]
which depends on $T, K$ and $\norm{\vecn}$, and is small in the relevant regimes. 
The estimator we use for $\Sigma^{\sK}$ is
\begin{equation}
\label{stat-MSigma}
S^{{\scriptscriptstyle\Sigma}}(T,K)=
\frac{1}{T}\int_{0}^{T}\big(N^{\sK}(s)-S^{{\scriptscriptstyle\vmu}}(T,K)\big)\big(N^{\sK}(s)-S^{{\scriptscriptstyle\vmu}}(T,K)\big)\dd s.
\end{equation}
Again, we can control how well this estimator approximates $\Sigma^{\sK}$. This provides another estimator for $\rho^{\scaleto{*}{3pt}}$, with a controlled error.  

\subsection{Standing assumptions}\label{hypo}

The two (regular) vector fields $\vecB(\vecx)$ and $\vecD(\vecx)$ are given in $\rpd$. 
We assume that their components have second partial derivatives which are polynomially bounded.
Obviously, we suppose that $B_j(\vecx)\geq 0$ and $D_j(\vecx)\geq 0$ for all $j=1,\ldots,d$ and $\vecx\in\rpd$.
A dynamical system in $\rpd$ is defined by the vector field $\vecX(\vecx)=\vecB(\vecx)-\vecD(\vecx)$, namely
\[
\frac{\dd \vecx}{\dd t}=\vecB(\vecx)-\vecD(\vecx)=\vecX(\vecx).
\]
For $\vecx\in \rpd$, we use the following standard norms:
\[
\norm{\vecx}=\sum_{j=\scaleto{1}{4.5pt}}^{\scaleto{d}{5pt}}x_{j}\,, \; 
\normdeux{\vecx}=\sqrt{\sum_{j=\scaleto{1}{4.5pt}}^{\scaleto{d}{5pt}} x_{j}^{\scaleto{2}{4pt}}}.
\]
We now state our hypotheses.

\begin{enumerate}[label=\textbf{H.\arabic*}]
\item \label{H1} The vectors fields $\vecB$ and $\vecD$ vanish only at $\ushort{0}$. 
\item \label{H2} There exists $\vecxf$ belonging to the interior of $\rpd$ \textup{(}fixed point of $\vecX$
\textup{)} such that
\[
\vecB(\vecxf)-\vecD(\vecxf)=\vecX(\vecxf)=\ushort{0}\,.
\]
\item\label{H3} Attracting fixed point: there exist $\beta>0$ and $R>0$ such that $\normdeux{\vecxf}<R$, and for all $\vecx\in \rpd$ with $\normdeux{\vecx}<R$,
\begin{equation}\label{Xalpha}
\langle\, \vecX(\vecx), (\vecx-\vecxf)\rangle \le -\beta\, \normdeux{\vecx}\, \scaleto{\|}{10pt}\vecx-\vecxf\scaleto{\|}{10pt}_{\scaleto{2}{4pt}}^{\scaleto{2}{4pt}}\,.
\end{equation}
\item\label{H4} The fixed point $\ushort{0}$ of the vector field $\vecX$ is repelling (locally unstable).
Moreover, on the boundary of $\rpd$, the vector field $\vecX$ points toward the interior (except at $\veczer$).
\item\label{H5} Define
\[
\widehat{B}(y)=\sup_{\snorm{\vecx}=y}\sum_{j=\scaleto{1}{4.5pt}}^{d}B_{j}(\vecx)\,,\;  \widehat{D}(y)=
\inf_{\snorm{\vecx}=y}\sum_{j=\scaleto{1}{4.5pt}}^{d}D_{j}(\vecx)
\]
and for $y>0$, let
\[
F(y)= \frac{\widehat{B}(y)}{\widehat{D}(y)}.
\]
We assume that there exists $\,0<L<R\,$ such that $\sup_{y>L} F(y)<\scaleto{1/2}{10pt}$ and $\lim_{y\to+\infty} F(y)=0.$
\item\label{H6} There exists $y_{0}>0$ such that $\int_{y_{0}}^{\infty} \widehat{D}(y)^{-1}\dd y<+\infty$ and $y\mapsto \widehat{D}(y)$ is 
increasing on $\left[y_{0},+\infty\right[$.
\item 
There exists $\xi>0$ such that
\begin{equation}
\label{cond:mort}
\tag{H7}
\inf_{\vecx\,\in \srpd} \inf_{1\leq\, j\,\leq\, d}\, \frac{D_{j}(\vecx)}{\sup_{1\leq\, \ell\, \leq \,d} x_{\ell}} >\xi.
\end{equation}
\item
Finally, we assume that
\begin{equation}\label{cond:nais}\tag{H8}
\inf_{1\leq\, j\,\leq \, d}\partial_{x_{j}} B_{j}(\veczer) > 0.
\end{equation}
(By $\partial_{x_{j}}$ we mean the partial derivative with respect to $x_{j}$.)
\end{enumerate}

Assumptions \ref{H5} and \ref{H6} ensure that the time for ``coming down from infinity'' for the 
dynamical system is finite. Together with \ref{H3}, this also implies that $\vecxf$ is a 
globally attracting stable fixed point. More comments on these assumptions can be found in
\cite{ccm2}. 

\subsection{A numerical example}

We consider the two-dimensional vector fields
\[
\vecB(x_{\scaleto{1}{4pt}},x_{\scaleto{2}{4pt}})=
\begin{pmatrix}
a\,x_{\scaleto{1}{4pt}}+b\,x_{\scaleto{2}{4pt}}\\
e\,x_{\scaleto{1}{4pt}}+f\,x_{\scaleto{2}{4pt}}
\end{pmatrix}
\]
and
\[
\vecD(x_{\scaleto{1}{4pt}},x_{\scaleto{2}{4pt}})=
\begin{pmatrix}
x_{\scaleto{1}{4pt}}\big(c\,x_{\scaleto{1}{4pt}}+d\,x_{\scaleto{2}{4pt}}\big)\\
x_{\scaleto{2}{4pt}}\big(g\,x_{\scaleto{1}{4pt}}+h\,x_{\scaleto{2}{4pt}}\big)
\end{pmatrix}
\]
where all the coefficients are positive. This is a model of competition between two species of Lotka-Volterra type.
We have taken
\begin{align*}
& a= 0.4569,\; b= 0.2959,\;e= 0.5920,\;f= 0.6449\\
& c= 0.9263,\;d= 0.9157,\;g= 0.9971,\;h= 0.2905.
\end{align*}
Assumptions \ref{H1} and \ref{H4} are easily verified numerically.
Assumptions \ref{H5} and \ref{H6} are satisfied because $\widehat{B}(y)\leq (a+b+e+f)y$ and
$\widehat{D}(y)\geq (c\wedge h)y^{\scaleto{2}{4pt}}/4$.
Concerning \ref{H2}, we checked numerically that there is a unique fixed point inside the positive quadrant, namely
$\vecxf=(0.3567,1.4855)$.
It remains to check \ref{H3}, namely that 
\[
-\beta= \sup\{R(\vecx):\vecx \in \real_{\scaleto{+}{5pt}}^{\scaleto{2}{4pt}}\}<0
\]
where
\[
R(\vecx)=\frac{\langle \vecX(\vecx),(\vecx-\vecxf)\rangle}{\normdeux{\vecx}\normdeux{\vecx-\vecxf}^{\scaleto{2}{3.6pt}}}.
\]
We first checked that the numerator $N(\vecx)=\langle\vecX(\vecx),(\vecx-\vecxf)\rangle$ is negative and vanishes only at 
$\veczer$ and $\vecxf$. It is easy to check that $N(\vecx)<0$ for $\normdeux{\vecx}$ large enough. We have verified
numerically that the only solutions of the equations $\partial_{x_1}N=\partial_{x_2}N=0$ in the closed positive quadrant are
$\vecxf$ and $\vecz=(0.1739, 0.4361)$, with $N(\vecz)= -0.2852$, thus this is a negative local minimum.
This implies that $N(\vecx)<0$ in the closed positive quadrant, except at $\veczer$ and $\vecxf$ 
where it vanishes. This implies that $R\leq 0$ in the closed positive quadrant.
It is easy to check that
\[
\limsup_{\snormdeux{\vecx}\to\scaleto{+\infty}{5pt}} R(\vecx)\leq -(c\wedge h)/\scaleto{\sqrt{2}}{9pt}.
\]
This implies that $R<0$ except perhaps at $\veczer$ and $\vecxf$.
Near $\veczer$ we have by Taylor expansion
\[
R(\vecx)
=-\frac{\langle D\vecX(0)\,\vecx,\vecxf\rangle}{\normdeux{\vecx}\normdeux{\vecxf}^{{\scaleto{2}{3.6pt}}}}\,
\big(1+\mathcal{O}(\normdeux{\vecx})\big)=
-\frac{\langle\, \vecx,D^{\intercal}\vecB(0)\,\vecxf
\rangle}{\normdeux{\vecx}\normdeux{\vecxf}^{{\scaleto{2}{3.6pt}}}}\,
\big(1+\mathcal{O}(\normdeux{\vecx})\big)
\]
and, since the vector $D^{\intercal}\vecB(0)\,\vecxf$ has positive components, there
exists $\varrho>0$ such that for all $\vecx\in\real_{\scaleto{+}{5pt}}^{\hspace{-.2pt}\scaleto{2}{4pt}}$
\[
\langle\, \vecx,D^{\intercal}\vecB(0)\,\vecxf\rangle\ge \varrho\,\normdeux{\vecx}.
\]
If $\vecy=\vecx-\vecxf$ is small, we have by Taylor expansion (since $\vecX(\vecxf)=\veczer$)
\[
R(\vecx)
=\frac{\langle M^*\vecy,\vecy\rangle}{\normdeux{\vecxf}\,\normdeux{\vecy}^{\scaleto{2}{3.6pt}}}\,
\big(1+\mathcal{O}(\normdeux{\vecy})\big)
=\frac{\left\langle\,\vecy,\frac{1}{2}\,\big({M^*}^{\intercal}+M^*\big)\,\vecy\big)
\right\rangle}{\normdeux{\vecxf}\,\normdeux{\vecy}^{\scaleto{2}{3.6pt}}}\,
\big(1+\mathcal{O}(\normdeux{\vecy})\big).
\]
One can check numerically that the two real eigenvalues of the symmetric matrix
\[
{M^*}^{\intercal}+M^*
\]
are strictly negative, the largest being numerically equal to $-0.786$. This completes the verification of hypothesis
\ref{H3}.

Illustrating standard experiments on populations of cells or bacteria, we have chosen $K=10^5$ and simulated a unique realization of the process with $T=100$ which contains 
about $5.10^7$ jumps (cell divisions or deaths).
The resilience computed from the vector field is numerically equal to $0.547$. We have computed $\rho^{\scaleto{*}{3pt}}_{\scriptscriptstyle{\mathrm{emp}}}(100,1,10^5)$.
The relative error, that is $|\rho^{\scaleto{*}{3pt}}_{\scriptscriptstyle{\mathrm{emp}}}(100,1,10^5)-\rho^{\scaleto{*}{3pt}}|/\rho^{\scaleto{*}{3pt}}$, is equal to $0.022$.

\smallskip \noindent
Note that the situation we are interested in is completely different from standard statistical approach where one can repeat the experiments.


\subsection{Organization of the paper}

In Section \ref{moment-process}, we will study the time evolution  of the moments of the process and  we will prove moment 
estimates for the qsd.
In Section \ref{sec:process}, we will obtain control on the large time behavior of averages for the 
process. In Section \ref{chaleur}, we will prove the relations \eqref{onsager}  and \eqref{kuboeq}.
In Section \ref{sec:stat}, we will apply these relations to obtain approximate expressions of the engineering resilience in terms of the covariance 
matrices for the qsd. 
From the results of Section \ref{sec:process}, we will deduce variance bounds for the estimators \eqref{stat-mu}, \eqref{stat-sigma} 
and \eqref{stat-D}, starting either in the qsd or from an initial condition of order $K$. 

\section{Time evolution of moments of the process and moments of the QSD}
\label{moment-process}

\subsection{Time evolution of moments starting from anywhere}
The generator $\gen$ of the birth and death process $\leproc=(\leproc(t),t\geq 0)$ is defined by
\begin{align}\label{gene}
& \gen f(\vecn)=\\
& K\sum_{\ell=1}^{d}B_{\ell}\left(\frac{\vecn}{K}\right)\big(f(\vecn+\vece{\ell})-f(\vecn)\big)
+K\sum_{\ell=1}^{d}D_{\ell}\left(\frac{\vecn}{K}\right)\big(f(\vecn-\vece{\ell})-f(\vecn)\big)
\nonumber
\end{align}
where $\vece{\ell}=(0,\ldots,0,1,0,\ldots,0)$, the $1$ being
at the $\ell$-th position, and $f:\domaine\to\real$ is a function with bounded support.
We denote by $(S^{\sK}_{t},t\geq 0)$ the semigroup of the process $\leproc$ acting on bounded 
functions, that is, for $f:\domaine\to\real$, we have
\[
S^{\sK}_{t}f(\vecn)
=\E\!\left[ f(\leproc(t))\big|\, \leproc(0)=\vecn\right]
=\E_{\vecn}\!\left[ f(\leproc(t))\right].
\]
For $A>1$, let
\begin{equation}\label{leTA}
\mathcal{T}_A=\inf\{t>0:\|\leproc(t)\|_{\scaleto{1}{4pt}}>A\}.
\end{equation}
Notice that we will use either $\norm{\!\hspace{-0.5pt}\cdot\!}$ or $\normdeux{\!\hspace{-0.5pt}\cdot\!}$. They are of course equivalent but one can be more convenient than the other, depending on the context.
We have the following result.

\begin{theorem}\label{borneexp}
There exists a constant $\cte{borneexp}>0$ such that for $K$ large enough, the operator
group $S^{\sK}_{1}$ extends to exponentially bounded functions and 
\[
\sup_{\vecn\, \in\, \sdomaine} S^{\sK}_{1}\left(\e^{\scaleto{\|}{7pt}\hspace{-0.1pt}\cdot\scaleto{\|}{7pt}_{\scaleto{1}{3pt}}}\right)(\vecn)\le 
\e^{\cte{borneexp} K}.
\]
\end{theorem}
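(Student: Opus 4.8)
The plan is to run a Lyapunov--function argument with $V(\vecn)=\e^{\snorm{\vecn}}$, the point being that whatever the (possibly very large) starting state, the strong negative drift coming from \ref{H5}--\ref{H6} makes the process ``come down from infinity'' fast enough that $\E_\vecn\big[\e^{\snorm{\leproc(1)}}\big]$ is controlled uniformly in $\vecn$. First I would note that the process is non-explosive: $\gen\snorm{\cdot}(\vecn)=K\big(\sum_\ell B_\ell(\vecn/K)-\sum_\ell D_\ell(\vecn/K)\big)$ is $\le K\widehat B(\snorm{\vecn}/K)$, hence negative once $\snorm{\vecn}>LK$ and bounded on the complementary compact set, so it is bounded above by a constant times $K$ and the standard criterion applies; Dynkin's formula can then be used for $V$ after localizing with the stopping times $\mathcal T_A$ and letting $A\to+\infty$. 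A direct computation gives
\[
\gen V(\vecn)=\e^{\snorm{\vecn}}\,K\Big[(\e-1)\textstyle\sum_\ell B_\ell(\vecn/K)+(\e^{-1}-1)\sum_\ell D_\ell(\vecn/K)\Big].
\]

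Next I would split $\domaine$ into two regimes. Using $\lim_{y\to+\infty}F(y)=0$ from \ref{H5}, fix $z_1\ge y_0$ large enough that $F(y)<\e^{-1}/2$ for $y>z_1$. Since $\sum_\ell B_\ell(\vecx)\le\widehat B(\snorm{\vecx})=F(\snorm{\vecx})\widehat D(\snorm{\vecx})$ and $\sum_\ell D_\ell(\vecx)\ge\widehat D(\snorm{\vecx})$, and since $\widehat D$ is increasing past $z_1$ by \ref{H6}, one gets, with $c_*:=\tfrac12(1-\e^{-1})>0$, for $\snorm{\vecn}>z_1K$,
\[
\gen V(\vecn)\le -c_*\,K\,\widehat D\!\big(\snorm{\vecn}/K\big)\,V(\vecn)\le 0.
\]
On the complementary set $\{\snorm{\vecn}\le z_1K\}$ one has $V(\vecn)\le\e^{z_1K}$ and $\gen V(\vecn)\le\gamma K V(\vecn)$ with $\gamma:=(\e-1)\sup_{\snorm{\vecx}\le z_1}\sum_\ell B_\ell(\vecx)<+\infty$; combining the two, $\gen V\le\gamma K V$ everywhere, hence the crude bound $\E_\vecm[V(\leproc(s))]\le\e^{\gamma K s}V(\vecm)$ for all $\vecm,s$, which is useful only once $\snorm{\vecm}$ is of order $K$.

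The heart of the proof is the ``coming down from infinity'' estimate. Put $\tau_1=\inf\{t:\snorm{\leproc(t)}\le z_1K\}$ and $u(t)=\E_\vecn[V(\leproc(t\wedge\tau_1))]$, which is finite by the crude bound. For any level $m>z_1K$, the strong drift inequality, the monotonicity of $\widehat D$, and the elementary bound $\E_\vecn\big[V(\leproc(t))\un_{\snorm{\leproc(t)}\le m}\un_{t<\tau_1}\big]\le\e^m$ yield the differential inequality
\[
u'(t)\le -c_*\,K\,\widehat D(m/K)\,\big(u(t)-\e^m\big).
\]
Feeding in the dyadic thresholds $m_k=2^k z_1 K$: while $u>2\e^{m_k}$ one has $u'\le-\tfrac12 c_* K\widehat D(m_k/K)\,u$, so $u$ descends from $\e^{m_{k+1}}$ down to $2\e^{m_k}$ in time at most $2^{k+2}z_1/\big(c_*\widehat D(2^kz_1)\big)$, and $\sum_k 2^k z_1/\widehat D(2^kz_1)<+\infty$ by comparison with $\int_{z_1}^{\infty}\widehat D(y)^{-1}\dd y<+\infty$ (\ref{H6}). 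Consequently the total time to descend from any height to level $z_1K$ is bounded by a finite constant $T^*$ independent of $\vecn$, and by time $1$ one is below some level $m_{k^{**}}$ with $k^{**}$ fixed (independent of $\vecn$); in all cases $u(1)\le\e^{C'K}$ uniformly in $\vecn$.

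Finally I would assemble. Write $\E_\vecn[V(\leproc(1))]=\E_\vecn[V(\leproc(1))\un_{\tau_1>1}]+\E_\vecn[V(\leproc(1))\un_{\tau_1\le1}]$. On $\{\tau_1>1\}$ one has $V(\leproc(1))\un_{\tau_1>1}=V(\leproc(1\wedge\tau_1))\un_{\tau_1>1}$, so this term is $\le u(1)\le\e^{C'K}$. On $\{\tau_1\le1\}$, the strong Markov property at $\tau_1$ together with $\snorm{\leproc(\tau_1)}\le z_1K+1$ and the crude bound applied over the remaining time $1-\tau_1\le1$ bound this term by $\e^{(\gamma+z_1+1)K}$. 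Adding the two and taking the supremum over $\vecn$ gives $\sup_{\vecn\in\domaine} S^{\sK}_1\big(\e^{\snorm{\cdot}}\big)(\vecn)\le\e^{\cte{borneexp}K}$. The main obstacle is the third step: a plain supermartingale argument only gives $\E_\vecn[V(\leproc(t))]\le V(\vecn)$, which is useless for large $\vecn$, so one genuinely has to exploit the super-linear growth of $\widehat D$ (equivalently, the finiteness of the ``come-down'' time encoded in \ref{H5}--\ref{H6}) through the level-by-level differential inequality above. The remaining points are routine bookkeeping: justifying Dynkin's formula for the unbounded function $V$ by localization and non-explosion, and checking the chaining of the dyadic estimates.
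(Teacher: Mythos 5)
Your argument is correct in substance, but it is a genuinely different route from the paper's. The paper packages the ``coming down from infinity'' mechanism into a single space--time supersolution: it sets $f_{\sK}(t,\vecn)=\varphi_{\sK}(t)\e^{\snorm{\vecn}}$, where $\varphi_{\sK}$ is built from the inverse of $G_{\sK}(y)=\int_y^\infty \widehat{D}(z)^{-1}\dd z+ (K\widehat{D}(y))^{-1}$ (this is where the finiteness of $\int^\infty\widehat D^{-1}$ from \ref{H6} enters, exactly as it enters your summability of $\sum_k 2^k z_1/\widehat D(2^k z_1)$). The key feature is $\varphi_{\sK}(0)=0$: after It\^o's formula at $t\wedge\mathcal T_A$ the initial term vanishes, the source term $\partial_t f_{\sK}+\gen f_{\sK}$ is bounded pointwise by quantities of order $\e^{\zeta K}$, and one gets a bound uniform in $A$ and in $\vecn$ in one stroke, with only Fatou/monotonicity needed to remove the localization. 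Your version makes the descent explicit and probabilistically transparent via the dyadic levels $m_k=2^kz_1K$ and the scalar ODE for $u(t)=\E_{\vecn}[V(\leproc(t\wedge\tau_1))]$, and the drift computation, the summability of the descent times, and the final assembly (splitting on $\{\tau_1\le 1\}$ and using the crude bound $\gen V\le \gamma K V$ after $\tau_1$) are all sound. The one place where your ``routine bookkeeping'' hides a real step is the passage of the differential inequality from the localized quantity $u_A(t)=\E_{\vecn}[V(\leproc(t\wedge\tau_1\wedge\mathcal T_A))]$ to $u(t)$: the boundary term $\E_{\vecn}[V(\leproc(\mathcal T_A))\un_{\{\mathcal T_A\le t\wedge\tau_1\}}]$ is of size $\e^{A}\,\proba_{\vecn}(\mathcal T_A\le t\wedge\tau_1)$, and the supermartingale bound $\proba_{\vecn}(\mathcal T_A\le t\wedge\tau_1)\le \e^{\snorm{\vecn}-A}$ only shows it is $\mathcal O(\e^{\snorm{\vecn}})$, which does not vanish as $A\to\infty$ and is useless precisely for the large initial states you care about. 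This is fixable by a standard device --- e.g.\ run the same drift computation for $W(\vecn)=\e^{2\snorm{\vecn}}$ (possible since $F(y)\to 0$ by \ref{H5}) to get $\proba_{\vecn}(\mathcal T_A\le t\wedge\tau_1)\le \e^{2\snorm{\vecn}-2A}$, which kills the boundary term --- but it should be said; the paper's choice of a test function vanishing at $t=0$ is designed exactly to avoid this issue.
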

\begin{proof}
Introduce the function $G_{\sK}$ defined on $[\hspace{1pt}y_{0},+\infty)$ by
\[
G_{\!\sK}(y) = \int_{y}^{\infty}\frac{\dd z}{\widehat{D}(z)}+\frac{1}{K\,\widehat{D}(y)}.
\]
Assumption \ref{H6} implies that  $G_{\!\sK}$ is well defined and decreasing on $[y_{0},+\infty)$. We can define its 
inverse function on $(0,s_{0}]$ for $s_{0}>0$ small enough (independent of $K$). 
Take $0<\eta \leq s_{0}\wedge \frac{1-\,\e^{-1}}{4}$.  Then there is a unique positive function $y_{\sK}$ defined by
\begin{equation}
\label{yK}
y_{\sK}(s)=G_{\!\sK}^{-1}(\eta s),\;s\in(0,1]. 
\end{equation}
Note that $y_{\sK}(s)\geq y_{0}$ and $\lim_{s\downarrow 0}y_{\sK}(s)=+\infty$.
Let
\[
\varphi_{\sK}(s)=\frac{\e^{-K y_{\scaleto{K}{3pt}}(s)}}{K \widehat{D}(y_{\sK}(s))}.
\]
Note that
\[
\lim_{s\hspace{.2pt}\downarrow\hspace{.2pt}0}\varphi_{\sK}(s)=0.
\]
Using the  Lipschitz continuity of $\widehat{D}$ (and then its differentiability almost everywhere) and \eqref{yK}, we obtain
\[
\dot{\varphi}_{\sK}(s)=\frac{\dd\varphi_{\sK}}{\dd s}(s)=
-\!\left(\!\frac{\e^{-K y_{\scaleto{K}{3pt}}(s)}}{\widehat{D}(y_{\sK}(s))}\!+
\!\frac{\e^{-K y_{\scaleto{K}{3pt}}(s)} \!\widehat{D}'(y_{\sK}(s))}{K \widehat{D}(y_{\sK}(s))^{\scaleto{2}{4.5pt}}}\!\right) \!\!\frac{\dd y_{\sK}}{\dd s}(s)
= \eta \e^{-K y_{\scaleto{K}{3pt}}(s)}\!.
\]
We now consider the function
\[
f\!_{\sK}(t,\vecn)=\varphi_{\sK}(t)\e^{\scaleto{\|}{7pt}\vecn\scaleto{\|}{7pt}_{\scaleto{1}{3pt}}}
\]
to which we apply It\^o's formula at time $t\wedge \mathcal{T}_A$.  We get
\[
\E_{\vecn}\!\left[\varphi_{\sK}\big(t\wedge \mathcal{T}_A\big)\e^{\|\sleproc(t\wedge \mathcal{T}_A)\|_{\scaleto{1}{3pt}}}\right]=
\E_{\vecn}\!\left[\, \int_{0}^{t\wedge \mathcal{T}_A}\big(\partial_{t}f\!_{\sK}+\gen f\!_{\sK}\big)(s,\leproc(s))\dd s\right].
\]
We have 
\begin{align*}
\MoveEqLeft \partial_{t}f\!_{\sK}(t,\vecn)+\gen f\!_{\sK}(t,\vecn)=
\dot\varphi_{\sK}(t)\e^{\scaleto{\|}{7pt}\vecn\scaleto{\|}{7pt}_{\scaleto{1}{3pt}}}\\
& +K \varphi_{\sK}(t)\e^{\scaleto{\|}{7pt}\vecn\scaleto{\|}{7pt}_{\scaleto{1}{3pt}}}\left((\e-1)\sum_{\ell=1}^{d}B_{\ell}\left(\frac{\vecn}{K}\right)
+(\e^{-1}-1)\sum_{\ell=1}^{d}D_{\ell}\left(\frac{\vecn}{K}\right)\right).
\end{align*}
Note that 
\begin{align*}
& \partial_{t}f\!_{\sK}(t,\vecn)+\gen f\!_{\sK}(t,\vecn)\\
& \leq \e^{\scaleto{\|}{7pt}\vecn\scaleto{\|}{7pt}_{\scaleto{1}{3pt}}}\left(\dot\varphi_{\sK}(t)
+ K\varphi_{\sK}(t) \left((\e-1)\,\widehat{B}\left(\frac{\norm{\vecn}}{K}\right)-(1-\e^{-1})\,\widehat{D}\left(\frac{\norm{\vecn}}{K}\right)\right)\right)
\\
&\leq 
\e^{\scaleto{\|}{7pt}\vecn\scaleto{\|}{7pt}_{\scaleto{1}{3pt}}}\left(\dot\varphi_{\sK}(t)- K\varphi_{\sK}(t)(1-\e^{-1})\,\widehat{D}\left(\frac{\norm{\vecn}}{K}\right)
\left(1 - \e F\!\left(\frac{\norm{\vecn}}{K}\right)\right)\right).
\end{align*}
It follows from \ref{H5} that there exists  a number $\zeta>y_{0}$ such that if $y>\zeta$, then $F(y)<(2 \hspace{-1pt}\e)^{-1}$. \newline
If $\norm{\vecn}<\zeta K$ we get
\[
\big|\partial_{t}f\!_{\sK}(t,\vecn)+\gen f\!_{\sK}(t,\vecn)\big|\le 
\Oun \e^{\zeta K}\big(\dot\varphi_{\sK}(t)+K\varphi_{\sK}(t)\big).
\]
For $\norm{\vecn}\ge K(\zeta\vee y_{\sK}(t))$ we have
\[
\partial_{t}f\!_{\sK}(t,\vecn)+\gen f\!_{\sK}(t,\vecn)\le 0
\]
since $\dot\varphi_{\sK}(t) = \eta K \widehat{D}(y_{\sK}(t))\varphi_{\sK}(t)$ and $\widehat{D}(\norm{\vecn}/K)\geq \widehat{D}(y_{\sK}(t))$. \newline
Finally,  for $\zeta K\le \norm{\vecn}<K y_{\sK}(t)$ we get
\[
\big|\partial_{t}f\!_{\sK}(t,\vecn)+\gen f\!_{\sK}(t,\vecn)\big|\le \e^{K y_{\sK}(t)}\dot\varphi_{\sK}(t) = \eta.
\]
We deduce that
\[
\E_{\vecn}\left[\varphi_{\sK}\big(1\wedge \mathcal{T}_A\big)\e^{\|\sleproc(1\wedge \mathcal{T}_A)\|_{\scaleto{1}
{3.5pt}}}\right]
\le \Oun \e^{\zeta K}.
\]
The result follows by letting $A$ tend to infinity and by monotonicity.
\end{proof}

We deduce moment estimates for the process which are uniform in the starting state, and in time, for times larger than $1$.
\begin{corollary}\label{auxbornees}
For all $t\ge1$, the semi-group $(S_{t})$ maps functions of polynomially bounded modulus in  bounded 
functions. 
In particular, for all $q\in\integers$, we have 
\begin{equation}
\label{estim-chili}
\sup_{t\geq 1}\sup_{\vecn\,\in\, \sdomaine} \E_{\vecn} \big[ \|\leproc(t)\|_{\scaleto{1}{4pt}}^{\scaleto{q}{4pt}}\big]
\leq q^q \e^{-q} K^q \e^{\cte{borneexp}}.
\end{equation}
\end{corollary}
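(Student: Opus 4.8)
The plan is to establish the moment bound \eqref{estim-chili} first at $t=1$ and then to propagate it to all $t\ge 1$ by the semigroup property, deducing the statement on polynomially bounded functions along the way. For $t=1$, I would combine Theorem~\ref{borneexp} with the elementary inequality $y^{q}\le q^{q}\e^{-q}\e^{y}$, valid for $y\ge 0$ because $y\mapsto y^{q}\e^{-y}$ is maximal at $y=q$. Applying it to $y=\snorm{\leproc(1)}/K$ and multiplying by $K^{q}$ gives the pathwise bound $\snorm{\leproc(1)}^{q}\le q^{q}\e^{-q}K^{q}\,\e^{\snorm{\leproc(1)}/K}$, so everything reduces to controlling $\E_{\vecn}\big[\e^{\snorm{\leproc(1)}/K}\big]$ uniformly in $\vecn$.

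Here is where the rescaling matters. Since $K\in\entiers$, the map $x\mapsto x^{1/K}$ is concave on $[0,\infty)$, and Jensen's inequality together with Theorem~\ref{borneexp} gives
\[
\E_{\vecn}\!\big[\e^{\snorm{\leproc(1)}/K}\big]
\le\big(\E_{\vecn}\!\big[\e^{\snorm{\leproc(1)}}\big]\big)^{1/K}
=\big(S^{\sK}_{1}\e^{\snorm{\,\cdot\,}}(\vecn)\big)^{1/K}
\le\e^{\cte{borneexp}},
\]
uniformly in $\vecn\in\sdomaine$. Integrating the pathwise bound above then yields $\sup_{\vecn\in\sdomaine}\E_{\vecn}[\snorm{\leproc(1)}^{q}]\le q^{q}\e^{-q}K^{q}\e^{\cte{borneexp}}$, which is \eqref{estim-chili} at $t=1$.

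To pass to arbitrary $t\ge1$, set $g=S^{\sK}_{1}(\snorm{\,\cdot\,}^{q})$; by the previous step $g$ is a bounded nonnegative function with $\|g\|_{\infty}\le q^{q}\e^{-q}K^{q}\e^{\cte{borneexp}}$, and since the semigroup $(S^{\sK}_{t})$ is conservative, $\E_{\vecn}[\snorm{\leproc(t)}^{q}]=S^{\sK}_{t-1}g(\vecn)\le\|g\|_{\infty}$ for all $t\ge1$, which is \eqref{estim-chili}. For the first assertion, if $|f(\vecm)|\le\sum_{k=0}^{q}a_{k}\snorm{\vecm}^{k}$ with $a_{k}\ge0$, then $|S^{\sK}_{t}f(\vecn)|\le\sum_{k=0}^{q}a_{k}\E_{\vecn}[\snorm{\leproc(t)}^{k}]$ is finite and bounded independently of $\vecn$ for every $t\ge1$, so $S^{\sK}_{t}f$ is a bounded function.

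No step is genuinely hard. The one point deserving attention is that passing directly through Theorem~\ref{borneexp} would only give the much weaker factor $\e^{\cte{borneexp}K}$ in place of $K^{q}$; recovering the sharp power requires the \emph{rescaled} exponential moment $\E_{\vecn}[\e^{\snorm{\leproc(1)}/K}]$, which the concavity of $x\mapsto x^{1/K}$ (available because $K\ge1$) extracts from the unrescaled estimate of Theorem~\ref{borneexp}.
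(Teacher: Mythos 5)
Your proof is correct and follows essentially the same route as the paper's: the pointwise inequality $x^{q}\e^{-x}\le q^{q}\e^{-q}$ at $t=1$, the bound $\E_{\vecn}[\e^{\snorm{\leproc(1)}/K}]\le(\E_{\vecn}[\e^{\snorm{\leproc(1)}}])^{1/K}\le\e^{\cte{borneexp}}$ (the paper invokes H\"older where you invoke Jensen for $x\mapsto x^{1/K}$ — the same estimate), and the Markov/semigroup property to propagate to $t\ge 1$.
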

\begin{proof}
We have
\begin{align*}
\E_{\vecn} \big[ \|\leproc(1)\|_{\scaleto{1}{4pt}}^{\scaleto{q}{4pt}}\big]
& = K^q\, \E_{\vecn} \!\left[ \frac{\|\leproc(1)\|_{\scaleto{1}{4pt}}^{\scaleto{q}{4pt}}}{K^q} \e^{-\frac{\|\sleproc(1)\|_{\scaleto{1}{3pt}}}{K}} \e^{\frac{\|\sleproc(1)\|_{\scaleto{1}{3pt}}}{K}}\right]\\
& \leq K^q q^q  \e^{-q} \E_{\vecn} \Big[\e^{\frac{\|\sleproc(1)\|_{\scaleto{1}{3pt}}}{K}}\Big]
\end{align*}
since for all $x\geq 0$, $x^q \e^{-x}\leq q^q \e^{-q}$.
Inequality \eqref{estim-chili} follows from H\"older's inequality and Theorem \ref{borneexp}.
Let us now consider $t> 1$. From the Markov property and by using the previous inequality, we deduce that
\[
\E_{\vecn}\left[\big\|\leproc(t)\big\|_{\scaleto{1}{4pt}}^{\scaleto{q}{4pt}}\, \right]
= \E_{\vecn}\Big[\E_{\sleproc(t-1)}\left[\big\|\leproc(1)\big\|^{\scaleto{q}{4pt}}_{\scaleto{1}{4pt}}\,\right]\Big] 
 \le q^q \e^{-q} K^q  \e^{\cte{borneexp}}.
\]
The proof is finished.
\end{proof}

For times $t$ less than $1$, the moment estimates depends on the initial state. 
\begin{proposition}\label{tempscourts}
For each integer $q$, there exists a constant $c_{q}>0$ such that for all $K>1$, $t\geq 0$ and $\vecn\in \domaine$
\[
\E_{\vecn}\big[\big\|\leproc(t)\big\|^{\scaleto{q}{4pt}}_{\scaleto{2}{4pt}}\,\big]\le c_{q} K^{q}
+\normdeux{\vecn}^{\scaleto{q}{4pt}} \, \un_{\{t< 1\}}\,.
\]
\end{proposition}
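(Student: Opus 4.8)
The plan is to split the range of $t$ into $t\ge 1$ and $0\le t<1$.

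For $t\ge 1$ there is nothing beyond Corollary \ref{auxbornees}: since $\normdeux{\vecx}\le\norm{\vecx}$ we get, uniformly in $\vecn\in\domaine$,
\[
\E_{\vecn}\!\left[\normdeux{\leproc(t)}^{\,q}\right]\le\E_{\vecn}\!\left[\norm{\leproc(t)}^{\,q}\right]\le q^{q}\e^{-q}\e^{\cte{borneexp}}K^{q},
\]
which is of the announced form, with the indicator term switched off. So the whole content lies in the window $0\le t<1$, where the target is $\E_{\vecn}[\normdeux{\leproc(t)}^{q}]\le c_{q}K^{q}+\normdeux{\vecn}^{q}$.

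For $0\le t<1$ I would apply It\^o's (Dynkin's) formula to $h(\vecn):=\normdeux{\vecn}^{\,q}$ along the process stopped at $\mathcal T_{A}$ from \eqref{leTA} (on $\{\norm{\cdot}\le A\}$ the function $h$ is bounded, so this is licit), giving $\E_{\vecn}[h(\leproc(t\wedge\mathcal T_{A}))]=h(\vecn)+\E_{\vecn}\!\big[\int_{0}^{t\wedge\mathcal T_{A}}\gen h(\leproc(s))\,\dd s\big]$, and then let $A\to+\infty$ by monotone convergence. It therefore suffices to have the Lyapunov-type bound $\gen h(\vecn)\le c_{q}K^{q}$ for all $\vecn\in\domaine$ (a bounded set of $(K,\vecn)$ being absorbed into $c_{q}$), since then $\E_{\vecn}[h(\leproc(t\wedge\mathcal T_{A}))]\le h(\vecn)+c_{q}K^{q}t\le\normdeux{\vecn}^{q}+c_{q}K^{q}$. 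With $r:=\normdeux{\vecn}$, a Taylor expansion using $n_{\ell}\le r$ yields
\[
\big(r^{2}\pm 2n_{\ell}+1\big)^{q/2}-r^{q}=\pm q\,n_{\ell}\,r^{q-2}+\Oun\,r^{q-2}=\Oun\,r^{q-1}
\]
once $r$ is bounded below (the finitely many $\vecn$ with $\normdeux{\vecn}$ small being checked by hand, and $\vecn=\veczer$ using $\vecB(\veczer)=\veczer$). On $\{\norm{\vecn}\le LK\}$ this settles the bound at once: the rates $B_{\ell}(\vecn/K),D_{\ell}(\vecn/K)$ are then bounded by a constant, $r\le LK$, and each jump changes $\normdeux{\cdot}^{q}$ by only $\Oun r^{q-1}=\Oun K^{q-1}$, so the rate prefactor $K$ in $\gen$ produces $\gen h(\vecn)=\Oun K^{q}$ — the crucial point being that the increment is $\Oun(r^{q-1})$, not $\Oun(r^{q})$.

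The substantial step, which I expect to be the main obstacle, is the region $\norm{\vecn}>LK$. There the rates $B_{\ell},D_{\ell}$ may themselves grow (polynomially in $\norm{\vecn}/K$), and collecting the leading parts of the increments gives
\[
\gen h(\vecn)\le q\,K^{2}r^{q-2}\,\big\langle\vecX(\vecn/K),\vecn/K\big\rangle+\Oun\Big(K r^{q-2}\sum_{\ell}\big(B_{\ell}+D_{\ell}\big)(\vecn/K)\Big),
\]
so one must extract from the death events enough negativity to beat the birth term and the lower-order correction. This is exactly where Assumption \ref{H5} ($\widehat B<\tfrac12\widehat D$ past $L$), the linear lower bound $\widehat D(y)\ge\xi y$ coming from \eqref{cond:mort}, and Assumption \ref{H4} (the field points inward on $\partial\rpd$, which tames the death-dominated coordinates) enter — these being precisely the ingredients behind the fact that the total population $\norm{\leproc(s)}$, and hence any convex increasing function of it, is a supermartingale as long as it exceeds $LK$ (``coming down from infinity'' for the process, as used in the proof of Theorem \ref{borneexp}). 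Concretely one either invokes the dissipativity of $\vecX$ at infinity (established together with global attraction from \ref{H3}--\ref{H6} in \cite{ccm2}) to get $\langle\vecX(\vecx),\vecx\rangle\le 0$ for $\norm{\vecx}$ large, whence $\gen h(\vecn)\le 0$ there; or else one runs the It\^o argument for a suitably weighted function $f(s,\vecn)=\varphi(s)\,\normdeux{\vecn}^{q}$ chosen, exactly as in Theorem \ref{borneexp}, so that $\partial_{s}f+\gen f\le 0$ on $\{\norm{\cdot}>LK\}$, the remaining values $t\in[\delta,1)$ then being reached by the Markov property via $\E_{\vecn}[h(\leproc((k{+}1)\delta))]=\E_{\vecn}[\E_{\leproc(k\delta)}[h(\leproc(\delta))]]$, which keeps the bound of the form $\normdeux{\vecn}^{q}+c_{q}'K^{q}$.
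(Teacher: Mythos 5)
Your architecture is the paper's: dispose of $t\ge 1$ via Corollary \ref{auxbornees}, and for $t<1$ apply Dynkin's formula to $\normdeux{\cdot}^{q}$ stopped at $\mathcal T_A$, reducing everything to the Lyapunov bound $\gen h\le c_qK^q$; your treatment of the region $\norm{\vecn}\le LK$ is correct. The gap sits exactly where you place it, in the region $\norm{\vecn}>LK$, and neither route you sketch closes it. The first route is flawed twice over: the inequality $\langle\vecX(\vecx),\vecx\rangle\le 0$ at infinity is not among the hypotheses and does not obviously follow from them (\ref{H3} is assumed only for $\normdeux{\vecx}<R$, and \ref{H5}, \eqref{cond:mort} control the \emph{unweighted} sums $\sum_\ell B_\ell$, $\sum_\ell D_\ell$, not the weighted sums $\sum_\ell B_\ell x_\ell$, $\sum_\ell D_\ell x_\ell$ entering $\langle\vecX(\vecx),\vecx\rangle$); and even granting it, the conclusion ``whence $\gen h\le 0$'' discards the second term of your own display, $\Oun\big(Kr^{q-2}\sum_\ell(B_\ell+D_\ell)(\vecn/K)\big)$. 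Since the rates are only polynomially bounded, that term can be of order $r^{q}/K$ or larger once $r\gg K$, so a merely nonpositive drift does not suffice: one needs strict negativity with a margin proportional to the total jump rate. The second route (a weight $\varphi(s)\normdeux{\vecn}^{q}$ plus a Markov iteration) is not carried out and, run as in Theorem \ref{borneexp}, would a priori yield a bound $\e^{\Oun K}$ at a fixed time rather than $\normdeux{\vecn}^{q}+c_qK^{q}$ for every $t<1$.

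What closes the step --- and is what the paper's phrase ``\ref{H5} and the equivalence of the norms'' refers to --- is the quantitative margin $\widehat B\le\widehat D/2$ applied to the $1$-norm, for which the unweighted sums are exactly what appear: with $g(\vecn)=\norm{\vecn}^{2q'}$ and $m=\norm{\vecn}$,
\[
\gen g(\vecn)\le 2q'K\,m^{2q'-1}\Big[\widehat B\Big(\tfrac{m}{K}\Big)\Big(1+\tfrac{\Oun}{m}\Big)-\widehat D\Big(\tfrac{m}{K}\Big)\Big(1-\tfrac{\Oun}{m}\Big)\Big],
\]
since $\sum_\ell B_\ell(\vecn/K)\le\widehat B(m/K)$ and $\sum_\ell D_\ell(\vecn/K)\ge\widehat D(m/K)$ by definition; for $\norm{\vecn}>CK$ with $C$ large the bracket is at most $-\widehat D/8<0$, the margin proportional to $\widehat D$ being precisely what absorbs the second-order (jump-variance) corrections. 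One then returns to $\normdeux{\cdot}^{q}$ by norm equivalence. Your parenthetical claim that ``any convex increasing function of the total population is a supermartingale above $LK$'' contains this idea --- it works for polynomials because their successive increment ratios tend to $1<\widehat D/\widehat B$ --- but as stated for arbitrary convex increasing functions it is false, and you do not make it the argument. So the proposal has the right skeleton but leaves the decisive estimate unproved, and the one concrete justification offered for it does not work.
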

\begin{proof} We have only to study the case $t< 1$, the other case being given in \eqref{estim-chili}.
We prove the result for $q$ even, namely $q=\scaleto{2}{6pt}q'$. The result for $q$ odd follows from Cauchy-Schwarz inequality. 
Letting
\[
f_{q'}(\vecn)=\normdeux{\vecn}^{\scaleto{2q'}{5.4pt}} 
\]
we have
\begin{align*}
\MoveEqLeft \gen f_{q'}(\vecn)=K\sum_{\ell=1}^{d}B_{\ell}\left(\frac{\vecn}{K}\right)\Big(\big(\normdeux{\vecn}^{\scaleto{2}{3.6pt}}+2\vecn_{\ell}+1\big)^{\scaleto{q'}{6.5pt}}-
\normdeux{\vecn}^{\scaleto{2q'}{5.4pt}}\big)\Big)\\
&  \quad\qquad +K\sum_{\ell=1}^{d}D_{\ell}\left(\frac{\vecn}{K}\right)\Big(\big(\normdeux{\vecn}^{\scaleto{2}{3.6pt}}-2\,\vecn_{\ell}+1\big)^{\scaleto{q'}{6.5pt}}-
\normdeux{\vecn}^{\scaleto{2q'}{5.4pt}}\big)\Big).
\end{align*}
Using \ref{H5} and the equivalence of the norms, we see that there exists a constant $c_{q'}>0$ such that if
$\normdeux{\vecn}> c_{q'} K$
\[
\gen f_{q'}(\vecn)<0.
\]
Moreover, we can take $c_{q'}$ large enough such that for all $\vecn$
\[
\gen f_{q'}(\vecn)\le c_{q'} K^{2q'}.
\]
Applying It\^o's formula to $f_{q'}$ we get as in the proof of Theorem
\ref{borneexp} 
\begin{align*}
\E_{\vecn}\left[\,\|\leproc(t\wedge \mathcal{T}_A)\|^{\scaleto{2q'}{6pt}}_{\scaleto{2}{4pt}}\,\right]
& \le \normdeux{\vecn}^{\scaleto{2q'}{5.3pt}}+\E_{\vecn}\left[\, \int_{0}^{t\wedge \mathcal{T}_A} c_{q'} K^{2q'}\dd s\right]\\
& \le \normdeux{\vecn}^{\scaleto{2q'}{5.3pt}}+t\, c_{q'} K^{2q'}.
\end{align*}
(Recall that $\mathcal{T}_A$ is defined in \eqref{leTA}.)
The result follows  by letting $A$ tend to infinity.
\end{proof}

\subsection{Moments estimates for the qsd}

Let us first recall (cf. \cite{ccm2})  that, under the assumptions of Section \ref{hypo}, there exists a unique qsd $
\nuk$ with support  $\domaine\backslash \{\veczer\}$. 
Further,  starting from the qsd, the extinction time is distributed according to an exponential law with parameter 
$\lambda_{0}(K)$ satisfying (Theorem 3.2 in \cite{ccm2})
\begin{equation}
\label{eq:lambda0}
\e^{-d_{1} K}\le\lambda_{0}(K)\le \e^{-d_{2} K}
\end{equation}
where $d_{1}>d_{2}>0$ are constants independent of $K$. 
Recall also that for all $t>0$,
\begin{equation}
\label{pte-qsd}
\proba_{\!\nuk}\big(\leproc(t)\in \cdot\,,T_{\veczer}>t\big)=
\e^{-\lambda_{0}(K)\hspace{.4pt}t}\nuk\big(\cdot)
\end{equation}
where
\[
T_{\veczer}=\inf\{t>0: \leproc(t)=\veczer\}.
\]
Finally, for all $f$ in the domain of the generator
\begin{equation}
\label{eq:miracle}
 \gen^{\dagger}\nuk(f)=\nuk(\gen f)= - \lambda_{0}(K)\,\nuk( f)
\end{equation}
with the notation
\[
\nuk(f)=\int f(\vecn) \dd\nuk(\vecn).
\]
We use several notations from \cite{ccm2} that we now recall. Let
\[
\vecnf= \lfloor K \vecxf\rfloor.
\] 
For $\vecx\in\rpd$ and $r>0$, $\mathcal{B}(\vecx,r)$ is the ball of center $\vecx$ and radius $r$.
We consider the sets
\begin{equation}
\label{boule}
\Delta = \mathcal{B}\big(\vecn^{\scaleto{*}{3pt}},\rho_{(4.2)}\sqrt{K}\,\big), \;\mathcal{D} =  \mathcal{B} \left(\vecnf,\frac{\min_{j} n_{j}^{\scaleto{*}{3pt}}}{2}\right)\cap \domaine
\end{equation}
where $\rho_{(4.2)}>0$ is a constant defined in \cite[Corollary 4.2]{ccm2}. 
Note that since $\vecnf$ is of order $K$, we have 
$\Delta \subset \mathcal{D}$ for $K$ large enough. The first entrance time in $\Delta$ (resp. $
\mathcal{D}$) will be denoted by $T_{\!\sDelta}$ (resp. $T_\mathcal{D}$). 

We first prove that the support of the qsd is, for large $K$, almost included in $ \mathcal{D}$. (This will be important to control moments later on.)
\begin{proposition}\label{borneexterieur}
There exists a constant $\pcte{borneexterieur}>0$ such that for all $K$ large enough
\[
\nuk\big( \mathcal{D}^{c}\big)\le \e^{-\pcte{borneexterieur}K}.
\]
\end{proposition}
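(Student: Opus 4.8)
The plan is to estimate $\nuk(\mathcal{D}^c)$ by combining the quasi-stationary identity \eqref{pte-qsd} with the fact that, started near $\vecnf$, the process leaves $\mathcal{D}$ only with exponentially small probability in any unit time window, while, started far from $\vecnf$, it is pulled back toward $\vecnf$ by the drift (assumption \ref{H3}) and cannot stay in $\mathcal{D}^c$ for long. The key quantitative input is the exponential moment bound of Theorem \ref{borneexp} together with the extinction-rate bound \eqref{eq:lambda0}: since $\lambda_0(K)\le \e^{-d_2 K}$ is super-polynomially small, conditioning on non-extinction over an $\Oun$ time interval costs essentially nothing.

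Here is the scheme I would carry out. First, since $\nuk$ is quasi-stationary, \eqref{pte-qsd} gives, for any fixed $t>0$,
\[
\nuk(\mathcal{D}^c) = \e^{\lambda_0(K)\hspace{.4pt}t}\,\proba_{\!\nuk}\big(\leproc(t)\in\mathcal{D}^c,\, T_{\veczer}>t\big)
\le \e^{\lambda_0(K)\hspace{.4pt}t}\,\proba_{\!\nuk}\big(\leproc(t)\in\mathcal{D}^c\big).
\]
Taking $t=1$ and using $\lambda_0(K)\le \e^{-d_2K}\le 1$, it suffices to show $\proba_{\!\nuk}(\leproc(1)\in\mathcal{D}^c)\le \e^{-c K}$ for some $c>0$ and $K$ large. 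Next I would split this probability according to the starting point: writing $\proba_{\!\nuk}(\leproc(1)\in\mathcal{D}^c)=\int \proba_{\vecn}(\leproc(1)\in\mathcal{D}^c)\,\dd\nuk(\vecn)$, I split the integral over $\mathcal D$ and over $\mathcal D^c$. On $\mathcal D^c$, I simply bound $\proba_{\vecn}(\leproc(1)\in\mathcal D^c)\le 1$ and reduce everything to the \emph{a priori} bound: one must show $\nuk(\mathcal{D}^c)$ is already controlled — which is circular unless handled more carefully. So instead I would iterate: from \eqref{pte-qsd} with general $t$, $\nuk(\mathcal D^c)=\e^{\lambda_0 t}\proba_{\!\nuk}(\leproc(t)\in\mathcal D^c, T_\veczer>t)$, and then use that for the process started \emph{anywhere} in $\domaine\setminus\{\veczer\}$ the hitting time $T_\mathcal{D}$ of $\mathcal D$ satisfies a good bound. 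Concretely, using the Lyapunov/exponential-moment control behind Theorem \ref{borneexp} (the function $f_{\sK}$ there is a supermartingale outside a region of size $\Oun K$), one gets that $\proba_{\vecn}(T_\mathcal{D}>t)$ decays in $t$ uniformly in $\vecn$, combined with \ref{H3} which forces the drift to point inward toward $\vecnf$ once inside $\mathcal B(\veczer,R\sqrt K\,)$-type scales. Then choose $t=\Theta(1)$ large enough that $\proba_{\vecn}(T_\mathcal D>t)\le 1/2$ uniformly; feeding this back gives $\nuk(\mathcal D^c)\le \e^{\lambda_0 t}\cdot\tfrac12 + (\text{mass that reached }\mathcal D \text{ but exited again by time }t)$, and the exit probability from $\mathcal D$ within a bounded time is exponentially small in $K$ by the same supermartingale argument applied to the shifted process and the quadratic Lyapunov function $f_{q'}$ of Proposition \ref{tempscourts} adapted to the scale $\sqrt K$. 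Iterating the resulting inequality $\nuk(\mathcal D^c)\le \tfrac12\,\nuk(\mathcal D^c)+\e^{-cK}$ (up to the harmless factor $\e^{\lambda_0 t}\le 2$) yields $\nuk(\mathcal D^c)\le \e^{-\pcte{borneexterieur}K}$.

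The main obstacle, I expect, is establishing the uniform-in-starting-point ``coming back to $\mathcal D$ within bounded time'' statement with an \emph{exponentially small} failure probability — i.e.\ ruling out that a non-negligible $\nuk$-fraction of mass sits at distance $\gg\sqrt K$ from $\vecnf$ but still well inside $\rpd$, where the linear-in-$K$ Lyapunov function of Theorem \ref{borneexp} only gives $\Oun$-type (not exponentially small) control. This is precisely where one needs the strengthened dissipativity \ref{H3} together with a careful choice of Lyapunov function at the right scale: roughly, $\exp(c\,\normdeux{\vecn-\vecnf}^2/K)$ or $\exp(c\,\normdeux{\vecn}/\sqrt K\,)$ should be a supermartingale (up to bounded corrections) in the annulus $\{C_1\sqrt K\le\normdeux{\vecn-\vecnf}\le C_2 K\}$, turning the polynomial drift estimates into the required exponential decay. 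Once that Lyapunov estimate is in place, the rest is the soft iteration argument above plus the observation that $\lambda_0(K)$ is negligible on the $\Oun$ timescale.
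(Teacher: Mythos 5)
Your overall strategy is the right one and matches the paper's: start from the quasi-stationary identity \eqref{pte-qsd}, combine a ``return toward $\vecnf$'' estimate valid from an arbitrary starting point with a ``confinement'' estimate valid near $\vecnf$, and feed the result back into \eqref{pte-qsd} using the fact that $\lambda_{0}(K)$ is exponentially small. But there is a concrete gap in your scheme as written. You decompose the contribution of trajectories that start in $\mathcal{D}^{c}$ into those that fail to reach $\mathcal{D}$ by time $t$ and the ``mass that reached $\mathcal{D}$ but exited again by time $t$'', and you claim the latter is exponentially small in $K$ by a supermartingale argument. It is not: a trajectory that has just entered $\mathcal{D}$ sits on the boundary of $\mathcal{D}$, and from there the probability of exiting $\mathcal{D}$ again within a bounded time is of order one, not $\e^{-cK}$. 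The exponential confinement estimate only holds if the process has first reached a set well \emph{inside} $\mathcal{D}$, separated from $\partial\mathcal{D}$ by a distance of order $K$. This is exactly why the paper works with the nested pair $\Delta\subset\mathcal{D}$ at two different scales: the return estimate (Lemma 5.1 of \cite{ccm2}, quoted as \eqref{res1}) targets the small ball $\Delta$ of radius $\Theta(\sqrt{K})$ around $\vecnf$, and the confinement estimate (Sublemma 5.8 of \cite{ccm2}, quoted as \eqref{res2}) bounds $\proba_{\vecn}(T_{\mathcal{D}^{c}}<t)\le C(1+t)\e^{-cK}$ only for $\vecn\in\Delta$. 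Your single-set version of the argument cannot close. A related point: the uniform return estimate to $\Delta$ requires a time window of length $\gamma\log K$, not $\Theta(1)$, and must be stated on the event $\{T_{\veczer}>T_{\Delta}\}$ to exclude absorbed paths; you gloss over both.

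Two further remarks. First, you correctly flag that the hard technical content is the uniform-in-$\vecn$ return estimate and the confinement estimate; the paper does not reprove these but imports them from \cite{ccm2}, so your proposal would require substantial extra work (and your suggested Lyapunov functions $\exp(c\normdeux{\vecn-\vecnf}^{2}/K)$ are only a sketch). Second, your self-referential inequality $\nuk(\mathcal{D}^{c})\le\delta\,\nuk(\mathcal{D}^{c})+\e^{-cK}$ with $\delta<1$ would indeed suffice if its ingredients were established, and is a legitimate shortcut; the paper instead avoids self-reference by iterating the contraction over $q=\lfloor K\rfloor$ blocks of length $\gamma\log K$, obtaining a uniform bound $\sup_{\vecn\neq\veczer}\proba_{\vecn}(\leproc(t_{\lfloor K\rfloor})\in\mathcal{D}^{c}\setminus\{\veczer\})\le C(1+t_{\lfloor K\rfloor})\e^{-cK}+\delta^{\lfloor K\rfloor}$ and only then invoking \eqref{pte-qsd} once. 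Either combinatorial scheme works; the missing two-scale structure $\Delta\subset\mathcal{D}$ is the real obstruction in your write-up.
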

\begin{proof}
We first recall two results from \cite{ccm2}. 
From Lemma 5.1  in \cite{ccm2}, there exist $\gamma>0$ and $\delta\in(0,1)$ such that for all $K$ large enough
\begin{equation}\label{res1}
\sup_{\vecn\in\sDelta^{\!c}\backslash\veczer}\proba_{\!\vecn}\big(T_{\sDelta}>\gamma\log K,T_{\veczer}>T_{\sDelta}\big)\leq \delta.
\end{equation}
By Sublemma 5.8 in \cite{ccm2}, there exist two constants $C>0$ and $c>0$ such that for all $K$ large enough, and for all $t>0$
\begin{equation}\label{res2}
\sup_{\vecn\in\Delta}\proba_{\!\vecn}\big(T_{ \mathcal{D}^{c}}<t\big)\leq C\big(1+t\big) \e^{-c K}.
\end{equation}
Now, for $q\in\integers\backslash \{0\}$ define
\[
t_{q}=q\gamma\log K.
\]
We will first estimate
$\sup_{\vecn}\proba_{\!\vecn}\big(\leproc(t_{q})\in  \mathcal{D}^{c}, T_{\veczer}>t_q\big)$.
Note that $\leproc(t_{q})\in  \mathcal{D}^{c}$ implies $T_{ \mathcal{D}^{c}}\leq t_{q}$. 
We distinguish two cases according to whether $\vecn\in \Delta$ or $\vecn\in \Delta^{\!c}\backslash\{\veczer\}$.\newline
Let $\vecn\in \Delta$. It follows from \eqref{res2} that
\[
\proba_{\!\vecn}\left(\leproc(t_q)\in  \mathcal{D}^{c}\right)\le C\big(1+t_{q}\big)\e^{-cK}. 
\]
Now let $\vecn\in \Delta^{\!c}\backslash\{\veczer\}$. We have
\begin{align*}
\MoveEqLeft \proba_{\!\vecn}\big(\leproc(t_q)\in  \mathcal{D}^{c}\backslash \{\veczer\}\big)=\\
& \proba_{\!\vecn}\big(\leproc(t_q)\in  \mathcal{D}^{c}\backslash \{\veczer\},T_{\sDelta}\le t_{q}\big)+
\proba_{\!\vecn}\big(\leproc(t_q)\in  \mathcal{D}^{c}\backslash \{\veczer\},T_{\sDelta}>t_{q}\big).
\end{align*}
Using the strong Markov property at time $T_{\sDelta}$ and \eqref{res2} we obtain
\begin{align*}
\MoveEqLeft \proba_{\!\vecn}\big(\leproc(t_q)\in  \mathcal{D}^{c}\backslash\{\veczer\},T_{\sDelta}\le t_{q}\big)\\
&= \E_{\vecn} \left[ \un_{\{T_{\ssDelta}\leq\, t_q\}} \proba_{\!\sleproc(T_{\ssDelta})}\left( \leproc(t_q-T_{\sDelta})\in  \mathcal{D}^{c}\backslash \{\veczer\}\right)\right]\\
& \leq C(1+t_q) \e^{-c K}.
\end{align*}
We bound the second term recursively in $q$.   
\begin{align*}
\MoveEqLeft[2] \proba_{\!\vecn}\big(\,T_{\sDelta}>t_{q},T_{\veczer}>T_{\sDelta}\big)\\
& = \E_{\vecn} \left[ \un_{\{T_{\ssDelta}>t_{q-1}\}} \un_{\{T_{\veczer}>T_{\ssDelta}\}}
\proba_{\!\sleproc(t_{q-1})}\left( T_{\sDelta}>t_1, T_{\veczer}>T_{\sDelta}\right)\right]\\
& \leq \delta \sup_{\vecn\, \in \Delta^{\!c}\backslash \{\veczer\}}\proba_{\!\vecn}\big(\,T_{\sDelta}>t_{q-1},T_{\veczer}>T_{\sDelta}\big)
\end{align*}
where we used the strong Markov property at time $t_{q-1}$ and \eqref{res1}. 
This implies
\[
\sup_{\vecn \in \Delta^{\!c}\backslash \{\veczer\}}\!\proba_{\!\vecn}\big(\leproc(t_q)\!\in\! \mathcal{D}^{c} 
\backslash \{\veczer\},T_{\sDelta}\!>t_{q}\big)
\le
\sup_{\vecn \in \Delta^{\!c}\backslash \{\veczer\}} \!\proba_{\!\vecn}\big(T_{\sDelta}>t_{q},T_{\veczer}\!>T_{\sDelta}\big) \le \delta^{q}.
\]
Therefore
\[
\sup_{\vecn\neq\veczer}\proba_{\!\vecn}\big(\leproc(t_q)\in  \mathcal{D}^{c}\backslash \{\veczer\}\big)\le
C\big(1+t_{q}\big)\e^{-c K}+\,\delta^{q}.
\]
Taking $q=\lfloor K \rfloor$ we conclude that there exists a constant $c'>0$ such that for $K$ large enough
\[
\sup_{\vecn\neq \veczer}\proba_{\!\vecn}\big(\leproc(t_{{\scriptscriptstyle \lfloor K \rfloor}})\in  \mathcal{D}^{c}
\backslash \{\veczer\}\big)\le\e^{-c' K}.
\]
This implies 
\[
\proba_{\!\snuk}\!\big(\leproc(t_{{\scriptscriptstyle \lfloor K \rfloor}})\in  \mathcal{D}^{c},T_{\veczer}
>t_{{\scriptscriptstyle \lfloor K \rfloor}}\big)\le \e^{-c' K}
\]
but by \eqref{pte-qsd}
\[
\proba_{\!\snuk}\!\big(\leproc(t_{{\scriptscriptstyle \lfloor K \rfloor}})\in  \mathcal{D}^{c},T_{\veczer}>t_{{\scriptscriptstyle \lfloor K \rfloor}}\big)=
\e^{-\lambda_{0}(K)\hspace{.4pt} t_{\scaleto{\lfloor K \rfloor}{4pt}}}\nuk\big(\mathcal{D}^{c}\big)
\]
and the result follows from \eqref{eq:lambda0}.
\end{proof}

\begin{corollary}\label{bornepuissance}
For each $q\in\integers$, there exists $C_{q}>0$ such that  for all $K$ large enough
\[
\int_{\mathcal{D}^c} \norm{\vecn}^{\scaleto{q}{4pt}}\dd \nuk(\vecn)\le C_{q}\, K^{q} \e^{-\pcte{borneexterieur} K}
\quad
\textup{and}
\quad
\int \norm{\vecn}^{\scaleto{q}{4pt}}\dd\nuk(\vecn)\le C_{q}\, K^{q}.
\]
\end{corollary}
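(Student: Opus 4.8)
The plan is to deduce both inequalities from quasi-stationarity, feeding in the moment bounds of Corollary~\ref{auxbornees} and Theorem~\ref{borneexp} together with the mass bound of Proposition~\ref{borneexterieur}.

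First I would handle the unrestricted moment bound, using the identity \eqref{pte-qsd} at time $t=1$. Since \eqref{pte-qsd} is an equality of measures, testing it against the nonnegative function $\norm{\,\cdot\,}^{q}$ gives
\[
\int\norm{\vecn}^{q}\dd\nuk(\vecn)=\e^{\lambda_{0}(K)}\,\E_{\snuk}\big[\norm{\leproc(1)}^{q}\,\un_{\{T_{\veczer}>1\}}\big]\le\e^{\lambda_{0}(K)}\int\E_{\vecn}\big[\norm{\leproc(1)}^{q}\big]\dd\nuk(\vecn).
\]
Now \eqref{estim-chili} bounds the inner expectation by $q^{q}\e^{-q}K^{q}\e^{\cte{borneexp}}$ uniformly in $\vecn$, and \eqref{eq:lambda0} gives $\e^{\lambda_{0}(K)}\le\e$ for $K$ large, which yields the second inequality with $C_{q}=q^{q}\e^{1-q}\e^{\cte{borneexp}}$. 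Running the same step with the function $\e^{\norm{\,\cdot\,}}$ and invoking Theorem~\ref{borneexp} in place of \eqref{estim-chili}, I also get the exponential-moment estimate
\begin{equation}\label{eq:qsd-exp-moment}
\int\e^{\snorm{\vecn}}\dd\nuk(\vecn)\le\e^{\lambda_{0}(K)}\e^{\cte{borneexp}K}\le 2\,\e^{\cte{borneexp}K}\qquad\text{for $K$ large.}
\end{equation}

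For the bound on $\mathcal{D}^{c}$ I would split $\mathcal{D}^{c}$ along the threshold $\norm{\vecn}=AK$, with $A>0$ a constant fixed below. On $\mathcal{D}^{c}\cap\{\norm{\vecn}\le AK\}$ one bounds $\norm{\vecn}^{q}\le(AK)^{q}$ and uses Proposition~\ref{borneexterieur}, obtaining $A^{q}K^{q}\e^{-\pcte{borneexterieur}K}$. On $\{\norm{\vecn}>AK\}$, writing $\norm{\vecn}^{q}\le(2q)^{q}\e^{-q}\e^{\snorm{\vecn}/2}$ and $1\le\e^{(\snorm{\vecn}-AK)/2}$ gives $\norm{\vecn}^{q}\le(2q)^{q}\e^{-q}\e^{-AK/2}\e^{\snorm{\vecn}}$, so by \eqref{eq:qsd-exp-moment} that region contributes at most $2(2q)^{q}\e^{-q}\e^{(\cte{borneexp}-A/2)K}$. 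Choosing $A\ge 2(\cte{borneexp}+\pcte{borneexterieur})$ makes this $\le 2(2q)^{q}\e^{-q}K^{q}\e^{-\pcte{borneexterieur}K}$, and summing the two pieces gives the first inequality (with a $C_q$ that can be taken equal for both statements after a $\max$).

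Everything here is routine; the one point requiring care is not to lose a factor $\tfrac12$ in the exponent. A plain Cauchy--Schwarz estimate using only $\nuk(\mathcal{D}^{c})\le\e^{-\pcte{borneexterieur}K}$ and the polynomial moments would yield only rate $\pcte{borneexterieur}/2$; routing the \emph{far} tail through the exponential moment \eqref{eq:qsd-exp-moment} (hence through Theorem~\ref{borneexp}) and only the near part through Proposition~\ref{borneexterieur} is what restores the full rate $\pcte{borneexterieur}$ appearing in the statement. So the only mild ``obstacle'' is bookkeeping the constant $A$ so that the far-tail exponent beats $\pcte{borneexterieur}$.
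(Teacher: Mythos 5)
Your argument is correct, and for the main inequality (the one over $\mathcal{D}^{c}$) it takes a genuinely different route from the paper, even though both proofs rest on exactly the same two ingredients: the exponential moment $\int\e^{\snorm{\vecn}}\dd\nuk\le 2\,\e^{\cte{borneexp}K}$ (your \eqref{eq:qsd-exp-moment}, which is verbatim the paper's first step) and Proposition~\ref{borneexterieur}. The paper combines them by first writing the pointwise bound $\norm{\vecn}^{q}\le q^{q}\e^{-q}K^{q}\e^{\snorm{\vecn}/K}$ and then applying H\"older with the $K$-dependent exponents $\big(K,\tfrac{K}{K-1}\big)$ to $\e^{\snorm{\vecn}/K}\un_{\mathcal{D}^{c}}$; this produces $\big(\int\e^{\snorm{\vecn}}\dd\nuk\big)^{1/K}\,\nuk(\mathcal{D}^{c})^{1-1/K}$, whose first factor is $\Thetaun$ and whose second factor is $\e^{-\pcte{borneexterieur}(K-1)}$, so the full rate survives with no threshold to tune. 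You instead split $\mathcal{D}^{c}$ at $\norm{\vecn}=AK$ and route only the far tail through the exponential moment, choosing $A$ so that the tail's exponent dominates $\pcte{borneexterieur}$. Your diagnosis of why plain Cauchy--Schwarz would halve the rate is exactly the issue the paper's H\"older trick is designed to circumvent; your domain splitting is a more elementary fix at the cost of the bookkeeping on $A$. A further minor difference: you prove the unrestricted bound directly from \eqref{pte-qsd} at time $1$ together with \eqref{estim-chili}, whereas the paper deduces it from the $\mathcal{D}^{c}$ bound and $\sup_{\vecn\in\mathcal{D}}\norm{\vecn}\le\Oun K$; both are immediate.
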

\begin{proof}
It follows at once from \eqref{pte-qsd} (at time $1$)  and Theorem \ref{borneexp} that
\begin{equation}\label{momex}
\int \e^{\scaleto{\|}{7pt}\vecn\scaleto{\|}{7pt}_{\scaleto{1}{3pt}}}\!\dd\nuk(\vecn) \le \e^{\lambda_{0}(K)}\e^{\cte{borneexp}K}
\le 2\,\e^{\cte{borneexp}K}
\end{equation}
for $K$ large enough. We have
\begin{align*}
\int_{\mathcal{D}^c} \norm{\vecn}^{\scaleto{q}{4pt}}\dd\nuk(\vecn)
&= K^q \int_{\mathcal{D}^c} \left(\frac{\norm{\vecn}}{K}\right)^q \e^{-\frac{\scaleto{\|}{5pt}\vecn\scaleto{\|}{5pt}_{\scaleto{1}{3pt}}}{K}}
\e^{\frac{\scaleto{\|}{5pt}\vecn\scaleto{\|}{5pt}_{\scaleto{1}{3pt}}}{K}}\dd\nuk(\vecn)\\
& \leq  K^q q^q \e^{-q} \int \e^{\frac{\|\vecn\|_{\scaleto{1}{3pt}}}{K}} \un_{ \mathcal{D}^c}(\vecn)\dd\nuk(\vecn).
\end{align*}
We use H\"older inequality to get 
\[
\int_{ \mathcal{D}^c} \norm{\vecn}^{\scaleto{q}{4pt}}\dd\nuk(\vecn)
\leq 
K^q q^q \e^{-q} \left(\int \e^{\scaleto{\|}{7pt}\vecn\scaleto{\|}{7pt}_{\scaleto{1}{3pt}}} \!\dd\nuk(\vecn)\right)^{\frac{1}{K}} 
\left(\int \un_{ \mathcal{D}^c}(\vecn)\dd\nuk(\vecn)\right)^{1-\frac{1}{K}}.
\]
The first result follows from \eqref{momex} and Proposition \ref{borneexterieur}.
The second estimate follows from the first one, and the bound
$\sup_{\vecn \in  \mathcal{D}} \norm{\vecn} \leq \Oun K$.
\end{proof}

We now estimate centered moments. 
\begin{theorem}\label{bornemom}
For each $q\in\zentiers_{+}$, there exists $C_{q}>0$ such that for all $K$ large enough
\[
\int \|\vecn-K\vecxf \|_{\scaleto{2}{4pt}}^{\scaleto{2}{4pt}\scaleto{q}{5pt}}\dd\nuk(\vecn)\le C_{q} K^{q}.
\]
\end{theorem}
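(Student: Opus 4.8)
The plan is to prove the estimate by induction on $q$, applying the identity $\nuk(\gen f)=-\lambda_{0}(K)\nuk(f)$ from \eqref{eq:miracle} to the (unbounded) test functions $f_{q}(\vecn)=g(\vecn)^{q}$, where $g(\vecn)=\normdeux{\vecn-K\vecxf}^{2}$, and extracting a dissipativity estimate from \ref{H3}. That \eqref{eq:miracle} applies to such an $f_{q}$, which is not bounded, is justified by truncation and dominated convergence, using that $f_{q}$ and $\gen f_{q}$ have polynomial growth while $\nuk$ has finite moments of all orders by Corollary \ref{bornepuissance} (alternatively, by Dynkin's formula with the stopping times $\mathcal T_{A}$, as in the proof of Theorem \ref{borneexp}).

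First I would compute the generator of $g$. Since $g(\vecn\pm\vece{\ell})-g(\vecn)=\pm 2(n_{\ell}-Kx^{*}_{\ell})+1$, one gets the exact identity
\[
\gen g(\vecn)=2K^{2}\langle\vecX(\vecn/K),\vecn/K-\vecxf\rangle+K\sum_{\ell=1}^{d}\big(B_{\ell}(\vecn/K)+D_{\ell}(\vecn/K)\big).
\]
Fix $\varepsilon>0$ small enough that $\normdeux{\vecxf}+\varepsilon<R$ and $\varepsilon<\normdeux{\vecxf}$, and set $\mathcal D_{\varepsilon}=\mathcal B(\vecnf,\varepsilon K)\cap\domaine$. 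For $\vecn\in\mathcal D_{\varepsilon}$, \ref{H3} applies at $\vecx=\vecn/K$ and yields $\langle\vecX(\vecn/K),\vecn/K-\vecxf\rangle\le-\beta(\normdeux{\vecxf}-\varepsilon)\,g(\vecn)/K^{2}$, while the birth/death term is $\le C_{0}K$ since $\vecB,\vecD$ are bounded on the compact ball; hence $\gen g(\vecn)\le-c_{1}g(\vecn)+C_{0}K$ on $\mathcal D_{\varepsilon}$ for some $c_{1}>0$. Outside $\mathcal D_{\varepsilon}$ I would only use the crude bound $|\gen g(\vecn)|\le P(\norm{\vecn},K)$ for a fixed polynomial $P$, which follows from the polynomial bounds assumed on $\vecB,\vecD$.

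Next, expanding $f_{q}=g^{q}$: one has $g(\vecn\pm\vece{\ell})^{q}-g(\vecn)^{q}=q\,g(\vecn)^{q-1}\big(g(\vecn\pm\vece{\ell})-g(\vecn)\big)$ plus a finite sum of terms of the form $g(\vecn)^{q-k}\big(g(\vecn\pm\vece{\ell})-g(\vecn)\big)^{k}$ with $2\le k\le q$; since $|g(\vecn\pm\vece{\ell})-g(\vecn)|\le C(\sqrt{g(\vecn)}+1)$, this gives $\gen f_{q}=q\,g^{q-1}\gen g+\mathcal R_{q}$ with $|\mathcal R_{q}(\vecn)|\le C_{q}\big(g(\vecn)^{q-1}+1\big)\cdot(\text{total jump rate at }\vecn)$, so $|\mathcal R_{q}|\le C_{q}K(g^{q-1}+1)$ on $\mathcal D_{\varepsilon}$ and $|\mathcal R_{q}|\le P_{q}(\norm{\vecn},K)$ globally. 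Plugging this into \eqref{eq:miracle} and using $f_{q}\ge0$, $\lambda_{0}(K)>0$ gives $q\,\nuk\big(g^{q-1}(-\gen g)\big)=\nuk(\mathcal R_{q})+\lambda_{0}(K)\nuk(f_{q})$; splitting the left-hand side over $\mathcal D_{\varepsilon}$ and $\mathcal D_{\varepsilon}^{c}$ and inserting the dissipativity estimate on $\mathcal D_{\varepsilon}$,
\begin{align*}
c_{1}q\int_{\mathcal D_{\varepsilon}}g^{q}\,\dd\nuk
&\le C_{0}qK\int_{\mathcal D_{\varepsilon}}g^{q-1}\,\dd\nuk+\nuk(\mathcal R_{q})\\
&\quad+\lambda_{0}(K)\nuk(f_{q})+q\int_{\mathcal D_{\varepsilon}^{c}}g^{q-1}|\gen g|\,\dd\nuk .
\end{align*}
The last three error terms are all $\Oun\,K^{\Oun}\e^{-cK}$: $\lambda_{0}(K)\nuk(f_{q})\le\e^{-cK}\Oun K^{2q}$ by \eqref{eq:lambda0} and Corollary \ref{bornepuissance}; and the $\mathcal D_{\varepsilon}^{c}$-integral together with the $\mathcal D_{\varepsilon}^{c}$-part of $\nuk(\mathcal R_{q})$ is the integral of a polynomial in $\norm{\vecn}$ over $\mathcal D_{\varepsilon}^{c}$, bounded by $\Oun K^{\Oun}\e^{-cK}$ exactly as in the proof of Corollary \ref{bornepuissance} (Hölder's inequality, the exponential moment \eqref{momex}, and $\nuk(\mathcal D_{\varepsilon}^{c})\le\e^{-cK}$); here I use that Proposition \ref{borneexterieur}, and hence Corollary \ref{bornepuissance}, hold verbatim with $\mathcal D$ replaced by the smaller ball $\mathcal D_{\varepsilon}$, the proof being identical since the underlying exit estimate \eqref{res2} is valid for any fixed-radius ball about $\vecnf$. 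The $\mathcal D_{\varepsilon}$-part of $\nuk(\mathcal R_{q})$ is $\le C_{q}K\big(\int_{\mathcal D_{\varepsilon}}g^{q-1}\dd\nuk+1\big)$. Altogether, for $K$ large, $\int_{\mathcal D_{\varepsilon}}g^{q}\dd\nuk\le C_{q}'K\big(\int_{\mathcal D_{\varepsilon}}g^{q-1}\dd\nuk+1\big)$.

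Finally, the induction on $q$: for $q=0$ the bound is trivial, and if $\int_{\mathcal D_{\varepsilon}}g^{q-1}\dd\nuk\le A_{q-1}K^{q-1}$ the recursion gives $\int_{\mathcal D_{\varepsilon}}g^{q}\dd\nuk\le A_{q}K^{q}$; adding back $\int_{\mathcal D_{\varepsilon}^{c}}g^{q}\dd\nuk\le\Oun K^{2q}\e^{-cK}=o(K^{q})$ (again by the Corollary \ref{bornepuissance}-type estimate, since $g^{q}\le\Oun(\norm{\vecn}^{2q}+K^{2q})$) yields the claimed $\int g^{q}\dd\nuk\le C_{q}K^{q}$. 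The step I expect to be the main obstacle is the expansion of $\gen(g^{q})$: one must check that the second-order (``variance'') contributions produce only $\Oun(K\,g^{q-1})$ and not $\Oun(g^{q})$, so that the negative leading term $-c_{1}q\,g^{q}$ coming from \ref{H3} dominates — this reflects the diffusive scaling of the process (unit jumps, rates of order $K$) and is precisely what forces the rate $K^{q}$ rather than $K^{2q}$. A secondary point of care is passing from the local dissipativity in \ref{H3} to an estimate for $\nuk$, which requires the exponential concentration of $\nuk$ on the scale-$K$ ball $\mathcal D_{\varepsilon}$.
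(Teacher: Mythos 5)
Your argument is essentially the paper's own proof: induction on $q$, the identity $\nuk(\gen f)=-\lambda_{0}(K)\,\nuk(f)$ from \eqref{eq:miracle} applied to (a version of) $\|\vecn-K\vecxf\|_{2}^{2q}$, the dissipative term $-c\,q\,g^{q}$ extracted from \ref{H3} on a macroscopic ball around $\vecnf$, second-order jump corrections of size $\Oun\, K\,g^{q-1}$ (you correctly identify this as the crux), and exponential localization of $\nuk$ to dispose of the complement of the ball. The differences are organizational rather than conceptual. First, the paper truncates the test function to $\|\vecn-K\vecxf\|_{2}^{2q}\un_{\mathcal D_{1}}$, which has finite support, so \eqref{eq:miracle} applies directly and the truncation error reappears as indicator-difference terms supported on $\mathcal D^{c}$; you instead work with the untruncated $g^{q}$ and split the integral afterwards, which forces you to justify \eqref{eq:miracle} for unbounded observables — your truncation/Dynkin justification is acceptable given Corollary \ref{bornepuissance}. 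Second, the paper uses the ball $\mathcal D_{1}$ of radius $\tfrac{2K}{3}\min_{j}x_{j}^{*}$, while you shrink to $\mathcal D_{\varepsilon}$ precisely so that \ref{H3} applies verbatim; the price is that you need Proposition \ref{borneexterieur} (hence Corollary \ref{bornepuissance}) for the smaller ball $\mathcal D_{\varepsilon}$, which does \emph{not} follow from the statement for $\mathcal D$ and requires the exit estimate \eqref{res2} of \cite{ccm2} to hold for $\mathcal D_{\varepsilon}$. You flag this honestly, and it is indeed true by the same large-deviation proof (any fixed-radius macroscopic ball around the attracting fixed point works), but it is the one step where you go beyond what the present paper actually states; note, for comparison, that the paper's own proof has the mirror-image issue of invoking \eqref{Xalpha} on all of $\mathcal D_{1}$, where $\normdeux{\vecn/K}$ may exceed the radius $R$ of \ref{H3}, which is resolved by the global attractivity remarked after \ref{H6}.
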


\begin{proof} The proof consists in a recursion over $q$. The bound is trivial for $q=0$. 
For $q\in\integers$ define the function
\[
f_q(\vecn)=\|\vecn-K\vecxf\|_{\scaleto{2}{4pt}}^{\scaleto{2q}{5.5pt}} \,\un_{\mathcal{D}_1}(\vecn)
\] 
where
\begin{equation*}
\mathcal{D}_1  =  \mathcal{B} \Big(\scaleto{K}{7.5pt}\vecxf, \scaleto{\frac{2K}{3}}{18pt}\min_{j}x^*_{j}\Big)\cap \domaine.
\end{equation*}
Recall that  $\vece{j}$  is the vector with $1$ at the $j$th coordinate and $0$ elsewhere. From the trivial identity
\begin{equation}
\label{trivial!}
\|\vecn - K\vecxf \pm \vece{j}\|_{\scaleto{2}{4pt}}^{\scaleto{2}{4pt}} = \|\vecn - K\vecxf\|_{\scaleto{2}{4pt}}^{\scaleto{2}{4pt}} \pm 2(n_j -K\vecxf_{j})+1
\end{equation}
it follows that
\begin{align*}
\MoveEqLeft[3] \big|\|\vecn - K\vecxf \pm \vece{j}\|_{\scaleto{2}{4pt}}^{\scaleto{2q}{5.3pt}} -
\|\vecn - K\vecxf\|_{\scaleto{2}{4pt}}^{\scaleto{2q}{5.3pt}} \pm 2q\, (n_j -K\vecxf_{j})\|\vecn - K\vecxf\|_{\scaleto{2}{4pt}}^{\scaleto{2q-2}{5.3pt}}\big|\\
& \qquad \qquad\leq 3^q 2^q \,(1+\|\vecn - K\vecxf\|_{\scaleto{2}{4pt}}^{\scaleto{2q-2}{5.3pt}}).
\end{align*}
Indeed, applying  the trinomial expansion to \eqref{trivial!}, we obtain
\begin{align*}
\MoveEqLeft
\big|\|\vecn - K\vecxf \pm \vece{j}\|_{\scaleto{2}{4pt}}^{\scaleto{2q}{5.3pt}} -
\|\vecn - K\vecxf\|_{\scaleto{2}{4pt}}^{\scaleto{2q}{5.3pt}} \pm 2q (n_j -K\vecxf_{j})\|\vecn - K\vecxf\|_{\scaleto{2}{4pt}}^{\scaleto{2q-2}{5.3pt}}\big|\\
& \leq 
q! \mathlarger{\sum}_{\substack{p_{1}\leq q-2\\ p_{1}+p_{2}+p_{3} = q}}
\frac{\|\vecn - K\vecxf\|_{\scaleto{2}{4pt}}^{\scaleto{2p_{1}}{5.3pt}} (2\hspace{1pt} \|\vecn - K\vecxf\|_{\scaleto{2}{4pt}})^{p_{2}} }{p_{1}! \, p_{2}! \, p_{3}!}
+ q\, \|\vecn - K\vecxf\|_{\scaleto{2}{4pt}}^{\scaleto{2q-2}{5.3pt}}.
\end{align*}
Observe that if $p_{1}\leq q-2, p_{1}+p_{2}+p_{3} = q$ and  then 
$2p_{1}+p_{2} =p_{1}+q-p_{3}\leq 2q -2 -p_{3}\leq 2q-2$, since $p_{3}\geq 0$.  This implies that
\[
\|\vecn - K\vecxf\|_{\scaleto{2}{4pt}}^{\scaleto{2p_{1}}{5.3pt}} (2 \|\vecn - K\vecxf\|_{\scaleto{2}{4pt}})^{\scaleto{p_{2}}{4pt}} 
\leq 2^q(1+\|\vecn - K\vecxf\|_{\scaleto{2}{4pt}}^{\scaleto{2q-2}{5.3pt}}).
\]
It follows that  
\begin{equation}\label{genfq}
\gen f_q(\vecn)=
2q K \sum_{j=1}^d X_j\left( \frac{\vecn}{K}\right) (n_j-Kx_j^*) \|\vecn - K\vecxf\|_{\scaleto{2}{4pt}}^{\scaleto{2q-2}{5.3pt}} \un_{\mathcal{D}_1}(\vecn)
+R_q(\vecn)
\end{equation}
where
\begin{equation}\label{Rqn}
|R_q(\vecn)|\leq \Oun\big( K 6^q (1+\|\vecn-K\vecxf\|_{\scaleto{2}{4pt}}^{\scaleto{2q-2}{5.3pt}}) \un_{\mathcal{D}_1}(\vecn) +  q K^{2q+1} \un_{\mathcal{D}^c}(\vecn)\big)
\end{equation}
To get this bound, we used the fact that
\[
\sup_{j=1,\ldots, d} |\un_{\mathcal{D}_1}(\vecn\pm \vece{j})-\un_{\mathcal{D}_1}(\vecn)|\leq \un_{\mathcal{D}^c}(\vecn).
\]
Using \eqref{Xalpha} we get
\begin{align}
\nonumber
\MoveEqLeft[6] K \sum_{j=1}^d X_j\left( \frac{\vecn}{K}\right) (n_j-Kx_j^{\scaleto{*}{3pt}}) \|\vecn - K\vecxf\|_{\scaleto{2}{4pt}}^{\scaleto{2q-2}{5.3pt}} \,\un_{\mathcal{D}_1}(\vecn)\\
& \leq -\beta'  \|\vecn - K\vecxf\|_{\scaleto{2}{4pt}}^{\scaleto{2q}{5.3pt}} \,\un_{\mathcal{D}_1}(\vecn) = -\beta'  f_q(\vecn)
\label{trucmusch}
\end{align}
where
\[
\beta'= \frac{\beta}{3} \min_j x_j^*\,.
\]
Integrating the equation \eqref{genfq} with respect to $\nuk$ and using \eqref{eq:miracle}, \eqref{Rqn}, \eqref{trucmusch} and Proposition \ref{borneexterieur},
we obtain
\[
(2q\beta'-\lambda_0(K))\,\nuk(f_q)\leq \Oun\big( K 6^q(1+\nuk(f_{q-1}))+ 6^q  K^{2q+1} \e^{-\pcte{borneexterieur} K}\big).
\]
Observing that $\nuk(f_0)\leq 1$, it follows by recursion over $q$ that, for each integer $q$, there exists $C'_q>0$
such that, for all $K$ large enough, $\nuk(f_q)\leq C'_q K^q$. Finally we have
\begin{align*}
\int \|\vecn-K\vecxf\|_{\scaleto{2}{4pt}}^{\scaleto{2q}{5.3pt}} \dd\nuk(\vecn)
& = \nuk(f_q) +\int \|\vecn-K\vecxf\|_{\scaleto{2}{4pt}}^{\scaleto{2q}{5.3pt}}\,\un_{\mathcal{D}_1^c}(\vecn) \dd\nuk(\vecn)\\
& \leq  \nuk(f_q) +\int\|\vecn-K\vecxf\|_{\scaleto{2}{4pt}}^{\scaleto{2q-2}{5.3pt}}\,\un_{\mathcal{D}^c}(\vecn)\dd\nuk(\vecn)
\end{align*}
since $\mathcal{D}\subset \mathcal{D}_1$.
The result follows using the previous estimate and Corollary \ref{bornepuissance}.
\end{proof}

The next result gives a more precise estimate for the average of $\vecn$ (instead of an error of order $\sqrt{K}\,$). 
\begin{proposition}\label{moyenne}
We have 
\[
\vmu^{\sK}-K\vecxf =\Oun
\]
where $\vmu^{\sK}$ is defined in \eqref{def-mup}.
Moreover,  since $\|\vecnf-K\vecxf \|_{\scaleto{2}{4pt}}=\Oun$, we have
\begin{equation}\label{nfvsxf}
\vmu^{\sK}-\vecnf=\Oun\,.
\end{equation} 
\end{proposition}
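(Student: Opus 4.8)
The plan is to test the eigenvalue identity \eqref{eq:miracle} against the coordinate functions and then to Taylor-expand the drift at $\vecxf$. Fix $p\in\{1,\dots,d\}$ and take $f(\vecn)=n_p$; this is legitimate in \eqref{eq:miracle} because $\nuk$ has the exponential moment bound \eqref{momex}. From the expression \eqref{gene} of the generator, only the term $\ell=p$ contributes and $f(\vecn\pm\vece{p})-f(\vecn)=\pm1$, so that
\[
\gen f(\vecn)=K\Big(B_p\big(\tfrac{\vecn}{K}\big)-D_p\big(\tfrac{\vecn}{K}\big)\Big)=K\,X_p\big(\tfrac{\vecn}{K}\big).
\]
Plugging this into \eqref{eq:miracle} gives $\int K X_p(\vecn/K)\,\dd\nuk(\vecn)=-\lambda_0(K)\,\mu_p^{\sK}$. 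By Corollary \ref{bornepuissance} with $q=1$ we have $0\le\mu_p^{\sK}\le\Oun K$, and by \eqref{eq:lambda0} we have $\lambda_0(K)\le\e^{-d_2K}$; hence the right-hand side is exponentially small in $K$, bounded by $\e^{-cK}$ for $K$ large enough and some $c>0$.

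Next I would Taylor-expand $X_p$ to second order at $\vecxf$. Since $\vecB,\vecD$ have polynomially bounded second partial derivatives (standing assumptions), the same holds for $\vecX$; and $\vecX(\vecxf)=\veczer$. Because $\rpd$ is convex and $\vecn/K\in\rpd$, for each $\vecn\in\domaine$ there is $\xi_{\vecn}$ on the segment $[\vecxf,\vecn/K]\subset\rpd$ with
\[
K\,X_p\big(\tfrac{\vecn}{K}\big)=\sum_{j=1}^d M^*_{p,j}\,(n_j-Kx_j^*)
+\frac{1}{2K}\sum_{j,k=1}^d \partial_j\partial_k X_p(\xi_{\vecn})\,(n_j-Kx_j^*)(n_k-Kx_k^*),
\]
where $M^*=D\vecX(\vecxf)$. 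Integrating against $\nuk$, the linear part yields exactly $\big(M^*(\vmu^{\sK}-K\vecxf)\big)_p$. For the quadratic remainder, $\normdeux{\xi_{\vecn}}\le\normdeux{\vecn}/K+\normdeux{\vecxf}$ and polynomial boundedness give $|\partial_j\partial_k X_p(\xi_{\vecn})|\le\Oun\big(1+(\norm{\vecn}/K)^m\big)$ for some integer $m$; using $|(n_j-Kx_j^*)(n_k-Kx_k^*)|\le\normdeux{\vecn-K\vecxf}^2$ and Cauchy--Schwarz,
\[
\frac{1}{K}\,\E_{\snuk}\!\Big[\big(1+(\tfrac{\norm{\vecn}}{K})^m\big)\normdeux{\vecn-K\vecxf}^2\Big]
\le \frac{\Oun}{K}\,\E_{\snuk}\big[\normdeux{\vecn-K\vecxf}^2\big]
+\frac{\Oun}{K^{m+1}}\Big(\E_{\snuk}\big[\norm{\vecn}^{2m}\big]\Big)^{\!1/2}\Big(\E_{\snuk}\big[\normdeux{\vecn-K\vecxf}^4\big]\Big)^{\!1/2},
\]
which is $\Oun$ by Theorem \ref{bornemom} (with $q=1$ and $q=2$) and Corollary \ref{bornepuissance} (with $q=2m$). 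Combining the three displays, $\big(M^*(\vmu^{\sK}-K\vecxf)\big)_p=\Oun$ for every $p$, i.e. $M^*(\vmu^{\sK}-K\vecxf)=\Oun$ as a vector.

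Finally, since $\res=-\sup\{\mathrm{Re}(z):z\in\Sp(M^*)\}>0$ under our assumptions, every eigenvalue of $M^*$ has strictly negative real part, so $0\notin\Sp(M^*)$ and $M^*$ is invertible with $(M^*)^{-1}$ a fixed matrix; hence $\vmu^{\sK}-K\vecxf=(M^*)^{-1}\Oun=\Oun$. For the ``moreover'' clause, $\normdeux{\vecnf-K\vecxf}=\normdeux{\lfloor K\vecxf\rfloor-K\vecxf}\le\sqrt d=\Oun$, so \eqref{nfvsxf} follows from the triangle inequality.

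The only delicate point is the uniform control of the quadratic Taylor remainder over all of $\domaine$: one must balance the polynomial growth of the second derivatives against the qsd moment bounds. If one prefers to avoid Cauchy--Schwarz, the same conclusion follows by splitting the integral over $\mathcal{D}$ (where $\xi_{\vecn}$ stays in a fixed compact set, so the second derivatives are $\Oun$ and $\E_{\snuk}[\normdeux{\vecn-K\vecxf}^2\un_{\mathcal{D}}]=\Oun K$) and over $\mathcal{D}^c$ (where Corollary \ref{bornepuissance} makes the contribution exponentially small).
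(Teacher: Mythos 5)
Your proof is correct and follows essentially the same route as the paper: apply the eigenvalue identity \eqref{eq:miracle} to the coordinate functions, Taylor-expand the drift at $\vecxf$, control the remainder via the qsd moment bounds, and invert $M^*$. The only (harmless) variation is that you run the Taylor expansion globally with Cauchy--Schwarz where the paper splits the integral over $\mathcal{D}$ and $\mathcal{D}^{c}$ --- a device you yourself note as an alternative.
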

\begin{proof}
Define the functions 
\[
g_{j}(\vecn)=\langle\, \vecn-K \vecxf,\vece{j}\rangle,\; 1\le j\le d.
\]
By Taylor expansion and the polynomial bounds on $\vecB$ and $\vecD$ we get
\begin{align*}
& \gen g_{j}(\vecn)=K\big(B_{j}(\vecn/K)- D_{j}(\vecn/K)\big)\\
& =\sum_{m=1}^d \big(\partial_{m} B_{j}(\vecxf)-\partial_{m}
D_{j}(\vecxf)\big) g_{m}(\vecn)\un_{\mathcal{D}}(\vecn)+\Oun
\frac{\|\vecn-K\vecxf \|_{\scaleto{2}{4pt}}^{\scaleto{2}{4pt}}}{K}\, \un_{\mathcal{D}}(\vecn)\\
& \quad +\, \Oun\left(K^{p}+\normdeux{\vecn}^{\scaleto{p}{4pt}}\,\right)\un_{\mathcal{D}^{c}}(\vecn)
\end{align*}
for some positive integer $p$ independent of $K$.
Using Cauchy-Schwarz inequality, identity \eqref{eq:miracle}, Corollary \ref{bornepuissance} and Proposition \ref{borneexterieur} 
we get
\[
\int \big(1+\normdeux{\vecn}^{\scaleto{p}{4pt}}\big)\un_{\mathcal{D}^{c}}(\vecn)\dd\nuk(\vecn)=o(1).
\]
From Proposition \ref{borneexterieur}, Theorem \ref{bornemom} and \eqref{eq:lambda0} we  get
\[
\sum_{m=1}^d \big(\partial_{m} B_{j}(\vecxf)-\partial_{m}
D_{j}(\vecxf)\big)\nuk(g_{m})=\Oun.
\]
The result follows from the invertibility of the $d\times d$ matrix
$(\partial_{m} B_{j}(\vecxf)-\partial_{m}D_{j}(\vecxf))$ which follows
from \ref{H3}. The other inequalities follow immediately.
\end{proof}

\begin{corollary}\label{variance}  
For all $K>0$, we have
\[
\|\Sigma^{\sK}\|\leq \int \left\|\,\vecn-\vmu^{\sK}\right\|_{\scaleto{2}{4pt}}^{\scaleto{2}{4pt}} \dd \nuk(\vecn)=
\int \|\,\vecn-K\vecxf \|_{\scaleto{2}{4pt}}^{\scaleto{2}{4pt}}\dd\nuk(\vecn)+\Oun
\leq \Oun K.
\]
\end{corollary}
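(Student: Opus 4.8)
The statement is a chain of two comparisons built around results already established, so the plan is to handle the three pieces in turn.

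\emph{First inequality.} I would start from the observation that $\Sigma^{\sK}=\Sigma^{\sK}(0)$ is, by definition, the covariance matrix of the random vector $\vecn$ distributed according to $\nuk$; it is therefore symmetric and positive semidefinite, so its operator norm equals its largest eigenvalue, which is bounded by the sum of all its (nonnegative) eigenvalues, i.e. by its trace:
\[
\|\Sigma^{\sK}\|\le \operatorname{tr}\big(\Sigma^{\sK}\big)=\sum_{p=1}^{d}\int\big(n_{p}-\mu_{p}^{\sK}\big)^{\scaleto{2}{4pt}}\dd\nuk(\vecn)=\int\big\|\,\vecn-\vmu^{\sK}\big\|_{\scaleto{2}{4pt}}^{\scaleto{2}{4pt}}\dd\nuk(\vecn).
\]
If one prefers a non-spectral matrix norm this step costs only an extra dimensional constant, which is harmless since the right-hand side carries $\Oun$ factors anyway.

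\emph{Middle equality.} Since $\vmu^{\sK}=\int\vecn\,\dd\nuk(\vecn)$ by \eqref{def-mup}, I would invoke the elementary ``parallel axis'' (bias--variance) identity: for any fixed $\vecv\in\rpd$,
\[
\int\big\|\,\vecn-\vecv\big\|_{\scaleto{2}{4pt}}^{\scaleto{2}{4pt}}\dd\nuk(\vecn)=\int\big\|\,\vecn-\vmu^{\sK}\big\|_{\scaleto{2}{4pt}}^{\scaleto{2}{4pt}}\dd\nuk(\vecn)+\big\|\,\vmu^{\sK}-\vecv\big\|_{\scaleto{2}{4pt}}^{\scaleto{2}{4pt}}.
\]
Taking $\vecv=K\vecxf$ and using Proposition \ref{moyenne}, which gives $\vmu^{\sK}-K\vecxf=\Oun$ and hence $\|\vmu^{\sK}-K\vecxf\|_{\scaleto{2}{4pt}}^{\scaleto{2}{4pt}}=\Oun$, then yields $\int\|\vecn-\vmu^{\sK}\|_{\scaleto{2}{4pt}}^{\scaleto{2}{4pt}}\dd\nuk(\vecn)=\int\|\vecn-K\vecxf\|_{\scaleto{2}{4pt}}^{\scaleto{2}{4pt}}\dd\nuk(\vecn)+\Oun$.

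\emph{Last inequality.} The bound $\int\|\vecn-K\vecxf\|_{\scaleto{2}{4pt}}^{\scaleto{2}{4pt}}\dd\nuk(\vecn)\le\Oun K$ is precisely Theorem \ref{bornemom} with $q=1$. Concatenating the three displays then proves the corollary. I do not expect any genuine obstacle here: the corollary merely repackages Theorem \ref{bornemom} (case $q=1$) and Proposition \ref{moyenne} together with two textbook facts — operator norm $\le$ trace for a positive semidefinite matrix, and the bias--variance decomposition — and the only point requiring a little care is that the norm on $\Sigma^{\sK}$ be the spectral (or Frobenius) one, or else that a dimensional constant be absorbed.
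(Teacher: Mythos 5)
Your proof is correct and follows the same route as the paper, whose proof of this corollary is literally "Combine Proposition \ref{moyenne} and Theorem \ref{bornemom}"; you have simply made explicit the two standard ingredients (operator norm bounded by the trace of a positive semidefinite covariance matrix, and the bias--variance identity) that the authors leave implicit. The only cosmetic caveat is that both cited results hold "for $K$ large enough" rather than literally "for all $K>0$", but this is consistent with the paper's asymptotic convention for the $\Oun$ symbol.
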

\begin{proof}
Combine Proposition \ref{moyenne} and Theorem \ref{bornemom}. 
\end{proof}

We now show that $\Sigma^{\sK}$ is indeed of order $K$. 
\begin{proposition}\label{borninf}
There exist two strictly positive constants $\pcte{borninf}$ and $\pcte{borninf}'$ such that for all $K$ large enough, the matrix $\Sigma^{\sK}$ satisfies
\[
\Sigma^{\sK}\ge \pcte{borninf} \, K \, \mathrm{Id}
\]
for the order among positive definite matrices, $\mathrm{Id}$ being the identity matrix, and, in particular,
\[
\int \left\|\,\vecn-\vmu^{\sK}\right\|_{\scaleto{2}{4pt}}^{\scaleto{2}{4pt}}\dd\nuk(\vecn)
\geq \pcte{borninf}' K.
\]
\end{proposition}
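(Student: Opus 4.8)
The strategy is to test the stationarity identity \eqref{eq:miracle} against the quadratic function $f_v(\vecn)=\langle\vecn-K\vecxf,v\rangle^{2}$, for an arbitrary unit vector $v\in\real^{d}$, so as to obtain the scalar fluctuation--dissipation relation
\[
\langle v,M^{*}\Sigma^{\sK}v\rangle=-\langle v,\EuScript{D}^{\sK}v\rangle+\Oun(\sqrt K),
\]
which is the quadratic-form version of \eqref{kuboeq} (itself proved later, in Section~\ref{proof:kuboeq}, by the very same computation), and then to combine it with the a priori upper bound $\|\Sigma^{\sK}\|\le\Oun K$ of Corollary~\ref{variance}. From the identity $\langle\vecn\pm\vece{\ell}-K\vecxf,v\rangle^{2}=\langle\vecn-K\vecxf,v\rangle^{2}\pm 2v_{\ell}\langle\vecn-K\vecxf,v\rangle+v_{\ell}^{2}$ one computes exactly
\[
\gen f_v(\vecn)=2K\langle\vecX(\vecn/K),v\rangle\,\langle\vecn-K\vecxf,v\rangle+K\sum_{\ell=1}^{d}(B_{\ell}+D_{\ell})(\vecn/K)\,v_{\ell}^{2}.
\]
Integrating against $\nuk$ and using \eqref{eq:miracle} exactly as in the proof of Proposition~\ref{moyenne} (if one wants to be careful about the domain of $\gen$, first truncate $f_v$ by $\un_{\mathcal{D}_{1}}$ as in the proof of Theorem~\ref{bornemom}; the resulting boundary terms are exponentially small in $K$ by Proposition~\ref{borneexterieur}, Corollary~\ref{bornepuissance} and Theorem~\ref{bornemom}), and noting that $\nuk(f_v)=\langle v,\Sigma^{\sK}v\rangle+\Oun$ while $\lambda_{0}(K)$ is exponentially small by \eqref{eq:lambda0}, one is left with
\[
0=2\!\int\! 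K\langle\vecX(\vecn/K),v\rangle\langle\vecn-K\vecxf,v\rangle\dd\nuk+\int\! K\sum_{\ell}(B_{\ell}+D_{\ell})(\vecn/K)\,v_{\ell}^{2}\dd\nuk+(\text{exponentially small in }K).
\]
In the first integral, Taylor-expand $\vecX$ at $\vecxf$ (recall $\vecX(\vecxf)=\veczer$): on $\mathcal{D}$ this replaces $K\vecX(\vecn/K)$ by $M^{*}(\vecn-K\vecxf)$ up to $\Oun\,\normdeux{\vecn-K\vecxf}^{2}/K$, and on $\mathcal{D}^{c}$ the polynomial bounds on $\vecX$, Corollary~\ref{bornepuissance} and Proposition~\ref{borneexterieur} make the contribution exponentially small; using Theorem~\ref{bornemom} (in particular $\int\normdeux{\vecn-K\vecxf}^{3}\dd\nuk=\Oun(K^{3/2})$, by Cauchy--Schwarz) and $\vmu^{\sK}-K\vecxf=\Oun$ (Proposition~\ref{moyenne}), the first integral equals $2\langle v,M^{*}\Sigma^{\sK}v\rangle+\Oun(\sqrt K)$. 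In the second integral, Taylor-expand $B_{\ell}+D_{\ell}$ at $\vecxf$, use $B_{\ell}(\vecxf)=D_{\ell}(\vecxf)$ and the same moment estimates: it equals $2\sum_{\ell}KB_{\ell}(\vecxf)v_{\ell}^{2}+\Oun=2\langle v,\EuScript{D}^{\sK}v\rangle+\Oun$. This yields the displayed scalar relation.

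To conclude, observe that $\langle v,\EuScript{D}^{\sK}v\rangle=K\sum_{\ell}B_{\ell}(\vecxf)\,v_{\ell}^{2}\ge 2cK$ uniformly over unit $v$, with $c:=\tfrac12\min_{\ell}B_{\ell}(\vecxf)>0$: indeed $B_{\ell}(\vecxf)=D_{\ell}(\vecxf)>0$ for every $\ell$ by \eqref{cond:mort}, since $\vecxf$ lies in the interior of $\rpd$. Hence $|\langle v,M^{*}\Sigma^{\sK}v\rangle|\ge cK$ for $K$ large. On the other hand $\Sigma^{\sK}$ is positive semidefinite (being a covariance matrix), so $\|\Sigma^{\sK}v\|_{2}\le\sqrt{\|\Sigma^{\sK}\|\,\langle v,\Sigma^{\sK}v\rangle}$, and $\|\Sigma^{\sK}\|\le\Oun K$ by Corollary~\ref{variance}; therefore
\[
cK\le|\langle v,M^{*}\Sigma^{\sK}v\rangle|\le\|M^{*}\|\,\|\Sigma^{\sK}v\|_{2}\le\|M^{*}\|\,\sqrt{\Oun K}\,\sqrt{\langle v,\Sigma^{\sK}v\rangle}.
\]
Since $M^{*}$ is invertible (a consequence of \ref{H3}, as already used in Proposition~\ref{moyenne}), $\|M^{*}\|>0$, and we obtain $\langle v,\Sigma^{\sK}v\rangle\ge\pcte{borninf}K$ with $\pcte{borninf}:=c^{2}/(\|M^{*}\|^{2}\,\Oun)>0$, uniformly over all unit $v$; this is exactly $\Sigma^{\sK}\ge\pcte{borninf}K\,\mathrm{Id}$. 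Taking the trace, $\int\normdeux{\vecn-\vmu^{\sK}}^{2}\dd\nuk=\mathrm{tr}(\Sigma^{\sK})\ge d\,\pcte{borninf}K$, which gives the second assertion with $\pcte{borninf}'=d\,\pcte{borninf}$.

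The main obstacle is the error bookkeeping in the first step: one must verify that every remainder — the optional truncation boundary terms, the quadratic Taylor remainder of $\vecX$, the linear and quadratic remainders of $B_{\ell}+D_{\ell}$, the contributions from $\mathcal{D}^{c}$, and the replacement of $K\vecxf$ by $\vmu^{\sK}$ — is genuinely $\Oun(\sqrt K)$ once integrated against $\nuk$; this is routine given Theorem~\ref{bornemom}, Corollary~\ref{bornepuissance}, Proposition~\ref{borneexterieur} and Proposition~\ref{moyenne}, and is exactly what reappears in the proof of Theorem~\ref{thm-Kubo}. Conceptually, the two facts that make the argument go through are that $\EuScript{D}^{\sK}$ is bounded below by a constant times $K\,\mathrm{Id}$ (which uses \eqref{cond:mort} and that $\vecxf$ is interior) and that $M^{*}$ is invertible, so that the final division by $\|M^{*}\|$ is legitimate.
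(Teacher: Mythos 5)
Your proof is correct, and it takes a genuinely different route from the paper's. The paper's argument is local and measure-theoretic: it passes to the covariance centered at $\vecnf$ (harmless by \eqref{comparaisondescov}), restricts the integral defining $\langle \vecv,\widetilde\Sigma^{\sK}\vecv\rangle$ to the ball $\Delta=\mathcal{B}(\vecnf,\rho_{(4.2)}\sqrt{K}\,)$, and invokes Lemma 5.3 of \cite{ccm2}, which bounds $\nuk$ from below on $\Delta$ by a constant times the uniform distribution $U_{\Delta}$; since $U_{\Delta}$ has directional variance $\Theta(K)$ in every direction, the conclusion follows in two lines. You instead anticipate the Kubo-type computation of Section \ref{proof:kuboeq} in quadratic-form version and extract the lower bound by playing the resulting identity $\langle v,M^{*}\Sigma^{\sK}v\rangle=-\langle v,\EuScript{D}^{\sK}v\rangle+\mathcal{O}(\sqrt{K}\,)$ against the upper bound of Corollary \ref{variance}, via the PSD inequality $\snormdeux{\Sigma^{\sK}v}^{2}\le\|\Sigma^{\sK}\|\,\langle v,\Sigma^{\sK}v\rangle$ and the strict positivity of $\EuScript{D}^{\sK}$ (correctly traced back to \eqref{cond:mort} and the interiority of $\vecxf$). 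The error bookkeeping is the same as in the paper's proof of Theorem \ref{thm-Kubo} and goes through; a cosmetic point is that the crude bound on the Taylor remainder of $B_{\ell}+D_{\ell}$ gives $\mathcal{O}(\sqrt{K}\,)$ rather than $\Oun$ unless one uses Proposition \ref{moyenne} for the linear term, but this is immaterial since only $\mathcal{O}(\sqrt{K}\,)$ is needed. There is no circularity: the proof of Theorem \ref{thm-Kubo} nowhere uses Proposition \ref{borninf}. What each approach buys: the paper's route is shorter but imports a pointwise lower bound on the qsd over the $\sqrt{K}$-ball from \cite{ccm2}, which is a rather strong local input; yours stays entirely within the moment estimates of Section \ref{moment-process} and yields an explicit constant $\pcte{borninf}$ in terms of $\min_{\ell}B_{\ell}(\vecxf)$, $\|M^{*}\|$ and the constant of Corollary \ref{variance}, at the price of duplicating the fluctuation--dissipation computation.
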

\begin{proof}
We denote by $\widetilde\Sigma^{\sK}$ the positive definite matrix
\[
\widetilde\Sigma^{\sK}_{p,q}= \int \big(n_{p}-n_{p}^{\scaleto{*}{3pt}}\big)
\big(n_{q}-n_{q}^{\scaleto{*}{3pt}}\big)\dd\nuk(\vecn)\,.
\]
By \eqref{nfvsxf} we have 
\begin{equation}\label{comparaisondescov}
\big\|\widetilde\Sigma^{\sK}-\Sigma^{\sK}\big\|_{\scaleto{2}{4pt}}=\Oun.
\end{equation}
Let $\vecv$ be a unit vector in $\real^{d}$.  We have
\[
\langle\, \vecv,\widetilde\Sigma^{\sK}\vecv\rangle=
\int \langle\, \vecv,(\vecn-\vecnf)\rangle^{\scaleto{2}{4pt}}\dd\nuk(\vecn)\ge
\int_{\sDelta}\langle\, \vecv,(\vecn-\vecnf)\rangle^{\scaleto{2}{4pt}}\dd\nuk(\vecn).
\]
From Lemma 5.3 in \cite{ccm2} there exists a constant $c>0$ such that for all $K$ large enough and all $\vecn\in \Delta$,
\[
{\nuk}(\{\vecn\})\geq c\, U_{\Delta}(\{\vecn\})
\]
where $U_{\Delta}$ is the uniform distribution on $\Delta$. 
Therefore
\[
\langle\, \vecv, \widetilde\Sigma^{\sK}\,\vecv\rangle\ge
c \int_{\Delta} \langle\, \vecv,(\vecn-\vecnf)\rangle^{\scaleto{2}{4pt}}
\dd U_{\Delta}(\vecn)
\]
and we get
\[
\langle\, \vecv,\tilde\Sigma^{\sK}\vecv\rangle\geq \pcte{borninf} K\|\vecv\|_{\scaleto{2}{4pt}}^{\scaleto{2}{4pt}}.
\]
The result follows.
\end{proof}

\section{Controlling time averages of the estimators}
\label{sec:process}

For $T>0$, we define the time average of a function $f:\domaine\to\real$ by 
\begin{equation}
\label{statf}
S_{f}(T,K)=\frac{1}{T}\int_{0}^{T}f(\leproc(s)) \dd s.
\end{equation}
The goal of this section is to obtain a control of $|S_{f}(T,K)-\nuk(f)|$ for a suitable class of functions. 

We recall the following result from \cite[Theorem 3.1]{ccm2}.
\begin{theorem}[\cite{ccm2}]\label{decor-thm}
There exist $a>0$, $K_0>1$ such that, for all 
$t\geq 0$ and for all $K\geq K_0$, we have
\begin{equation}\label{decor}
\sup_{\vecn\in\sdomaine\backslash\{\veczer\}}
\big\| \proba\!_{\vecn}(\leproc(t)\in \cdot \,,\, t< T_{\veczer}) -\proba\!_{\vecn}(t< T_{\veczer})
\,\nuk(\cdot)\big\|_{{\scriptscriptstyle \mathrm{TV}}} \leq 2\e^{-\frac{at}{\log K}}.
\end{equation}
\end{theorem}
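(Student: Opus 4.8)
Theorem~\ref{decor-thm} is the content of \cite[Theorem 3.1]{ccm2}; I only indicate the line of argument one would follow. The plan is to verify the general criterion for \emph{uniform} exponential convergence to a quasi-stationary distribution (Champagnat--Villemonais): writing $E=\domaine\setminus\{\veczer\}$, it is enough to produce a probability measure $\nu$ on $E$, a time $t_{0}>0$ and constants $c_{1},c_{2}\in(0,1]$ such that, for every $\vecn\in E$ and every $t\ge0$,
\[
\text{(A1)}\qquad \proba_{\vecn}\!\big(\leproc(t_{0})\in\cdot\,,\,t_{0}<T_{\veczer}\big)\ \ge\ c_{1}\,\proba_{\vecn}\!\big(t_{0}<T_{\veczer}\big)\,\nu(\cdot),
\]
\[
\text{(A2)}\qquad \proba_{\nu}\!\big(t<T_{\veczer}\big)\ \ge\ c_{2}\,\proba_{\vecn}\!\big(t<T_{\veczer}\big).
\]
The criterion then gives a unique qsd $\nuk$ together with $\sup_{\vecn\in E}\|\proba_{\vecn}(\leproc(t)\in\cdot\mid t<T_{\veczer})-\nuk\|_{\mathrm{TV}}\le 2(1-c_{1}c_{2})^{\lfloor t/t_{0}\rfloor}$; multiplying by $\proba_{\vecn}(t<T_{\veczer})\le1$ produces exactly the total variation quantity of \eqref{decor}, and the elementary bound $(1-c_{1}c_{2})^{\lfloor t/t_{0}\rfloor}\le\e^{-at/t_{0}}$ for a suitable $a>0$ yields the stated shape \emph{provided one can take $t_{0}=\Thetaun\log K$ while keeping $c_{1}c_{2}$ bounded away from $0$ uniformly in $K$}. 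The natural choice is $t_{0}=\gamma'\log K$ for a large enough constant $\gamma'$ and $\nu$ the normalized restriction of $\nuk$ to the ball $\Delta$ of \eqref{boule}. That $t_{0}$ must be of order $\log K$ is forced by the geometry: reaching the $\sqrt K$-neighbourhood $\Delta$ of $\vecnf$, whether by contracting from a macroscopic distance or by ``taking off'' from a state of size $\Oun$ near the repelling point $\veczer$, requires relaxing or growing by a factor $\Thetaun\sqrt K$, which costs a time of order $\log K$ --- the scale already appearing in \eqref{res1}.

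Condition (A1) is obtained in two steps. \emph{Reaching $\Delta$:} iterating the one-block estimate \eqref{res1} over a fixed (hence $K$-independent) number of consecutive intervals of length $\gamma\log K$, discarding the exponentially unlikely excursions out of $\mathcal{D}$ by \eqref{res2} exactly as in the proof of Proposition~\ref{borneexterieur}, and using that $\proba_{\vecn}(t_{0}<T_{\veczer})$ is bounded below uniformly in $\vecn\in E$ and in $K$ (a takeoff estimate near $\veczer$), one finds a fixed buffer $s_{0}=\Oun$ and constants $\gamma',\kappa>0$ with
\[
\proba_{\vecn}\!\big(T_{\sDelta}\le\gamma'\log K-s_{0},\ T_{\sDelta}<T_{\veczer}\big)\ \ge\ \kappa\,\proba_{\vecn}\!\big(\gamma'\log K<T_{\veczer}\big),\qquad\vecn\in E.
\]
\emph{Spreading over $\Delta$:} one needs a local lower bound of Doeblin type, namely a constant $c_{0}>0$ such that for every $\vecy\in\Delta$ and every $u\in[s_{0},\gamma'\log K]$,
\[
\proba_{\vecy}\!\big(\leproc(u)\in\cdot\,,\,u<T_{\veczer}\big)\ \ge\ c_{0}\,U_{\Delta}(\cdot).
\]
This is where the fluctuation scale $\sqrt K$ enters: near $\vecnf$ the jump rates are of order $K$ and the process moves on the scale $\sqrt K$ (cf.\ Propositions~\ref{moyenne} and~\ref{borninf}), so over the bounded time $s_{0}$ it reaches any prescribed state of $\Delta$ with probability at least of order $K^{-d/2}=\Oun\,U_{\Delta}(\{\cdot\})$ --- a heat-kernel lower bound on a ball of radius $\Oun\sqrt K$; iterating this bounded-time step and controlling survival from $\Delta$ over times $\le\gamma'\log K$ (via \eqref{res2} and the estimates of Section~\ref{moment-process}) extends the bound to all $u$ in the range. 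Combining the last two displays by the strong Markov property at $T_{\sDelta}$, and using that $\nuk|_{\Delta}$ has density with respect to $U_{\Delta}$ bounded above, yields (A1) with the chosen $\nu$ and $c_{1}$ bounded below uniformly in $K$.

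For (A2), the decisive point is that \emph{no initial state out-survives the quasi-stationary regime by more than a $K$-independent factor}: there is $C$, independent of $K$, with $\sup_{\vecn\in E}\proba_{\vecn}(t<T_{\veczer})\le C\,\e^{-\lambda_{0}(K)t}$ for all $t\ge0$. Granting this, (A2) follows with a $K$-independent $c_{2}$, since $\proba_{\nu}(t<T_{\veczer})=\nuk(\Delta)^{-1}\int_{\Delta}\proba_{\vecy}(t<T_{\veczer})\dd\nuk(\vecy)$ is itself of order $\e^{-\lambda_{0}(K)t}$ for every $t$: by \eqref{pte-qsd} one has $\int_{E}\proba_{\vecy}(t<T_{\veczer})\dd\nuk(\vecy)=\e^{-\lambda_{0}(K)t}$, the mass $\nuk(\Delta)$ is bounded below uniformly in $K$ (Theorem~\ref{bornemom} and the choice of the constant defining $\Delta$), and the survival probabilities from points of $\Delta$ are mutually comparable by the Doeblin step above. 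The uniform upper bound $\sup_{\vecn}\proba_{\vecn}(t<T_{\veczer})\le C\,\e^{-\lambda_{0}(K)t}$ itself combines the coming-down estimates \eqref{res1}--\eqref{res2} and Theorem~\ref{borneexp} --- which reduce the supremum to one over the bulk set $\mathcal{D}$, at the cost only of the factor $\e^{\gamma\lambda_{0}(K)\log K}=1+o(1)$, negligible because $\lambda_{0}(K)\le\e^{-d_{2}K}$ by \eqref{eq:lambda0} --- with a control of the principal right eigenfunction $h$ of the sub-Markovian semigroup, namely that, suitably normalized, $h$ is bounded above and below by positive constants on the bulk, uniformly in $K$. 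This last point --- the uniform regularity of $h$, equivalently the comparability of survival probabilities across the bulk up to a time shift of order $\log K$ --- is the main obstacle; it rests on a Harnack-type argument built on the Doeblin minorization together with the Lyapunov and concentration estimates of Section~\ref{moment-process}, and is the step carried out in \cite{ccm2}.
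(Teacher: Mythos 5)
The paper offers no proof of this statement---it is quoted verbatim from \cite[Theorem 3.1]{ccm2}---so the only meaningful comparison is with the argument of that reference. Your sketch matches it: \cite{ccm2} does proceed by verifying the Champagnat--Villemonais conditions (A1)--(A2) with $t_0=\Thetaun\log K$, a Doeblin-type minorization by $U_{\Delta}$ on the $\sqrt K$-ball $\Delta$ (the source of \eqref{res1}, \eqref{res2} and of the bound $\nuk\ge c\,U_{\Delta}$ invoked in Proposition \ref{borninf}), and uniform comparability of survival probabilities; the only blemish is the throwaway inequality $(1-c_1c_2)^{\lfloor t/t_0\rfloor}\le \e^{-at/t_0}$, which fails for $0<t<t_0$ and must be repaired either by accepting the prefactor $2(1-c_1c_2)^{-1}$ (harmless for every use of \eqref{decor} in this paper) or by shrinking $a$ after a separate short-time argument.
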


It is also proved in \cite{ccm2} that, for a time much larger than $\log K$ and much smaller than the extinction time 
(which is of order $\exp(\Thetaun K)$), the law of the  process at time $t$  is close to the qsd. The accuracy of the 
approximation depends on the initial condition. This suggests to study the distance between the law of the process at time $t$ and 
the qsd as a function of the initial condition, $K$ and $t$. 
This will result  from \eqref{decor} if $\proba_{\!\vecn}\big(T_{\veczer}\leq t\big)$ can be estimated. In fact we prove a more general 
result. 

\begin{lemma}\label{sortpas}
For $\gamma\geq 0$, define $\tau_{\gamma}=\inf\big\{t\geq 0:\| \leproc(t)\|_{\scaleto{1}{4pt}}\le \gamma K\big\}.$
There exist $\delta>0$, $\alpha>0$ and $C>0$ such that for all $\vecn\in\domaine$, $K\ge 1$, 
$0\le \gamma\le 1\wedge \frac{\alpha}{\snorm{\vecxf}}$ and $t\ge0$, we have
\begin{align}
\nonumber
& \proba_{\!\vecn}\big(\tau_{\gamma}\leq t\big)
\leq C\Bigg( \exp\left(-\delta\left(\zeta\left(\frac{\norm{\vecn}}{K}\wedge\,\alpha\right)-\gamma \norm{\vecxf}\right)K\right)\\
& \hspace{90pt} +\, t \exp\left(-\delta\,(\alpha-\gamma  \norm{\vecxf}) K\right)\Bigg)
\label{probasortgen}
\end{align}
where
\begin{equation}\label{def-zeta}
\zeta=\min_{1\le j\le d}\;x^{\scaleto{*}{3pt}}_{j}>0.
\end{equation}
\end{lemma}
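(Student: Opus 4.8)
The plan is to introduce an exponential Lyapunov function adapted to the repulsiveness of $\veczer$, to turn it into a supermartingale on the region $\{\norm{\cdot}\le\alpha K\}$, and then to iterate the resulting one‑excursion estimate over the successive excursions of $\leproc$ in and out of that region; the first excursion will produce the first term of \eqref{probasortgen} and the (uniformly controlled) number of the remaining ones will produce the second. For Step~1 I would fix a small $\theta>0$ and set $V(\vecm)=\e^{-\theta\langle\vecxf,\vecm\rangle}$. Since $V(\vecm\pm\vece{\ell})=\e^{\mp\theta x_\ell^*}V(\vecm)$, a direct computation gives
\[
\gen V(\vecm)\le K\,V(\vecm)\Big[\textstyle\sum_{\ell=1}^d B_\ell(\vecm/K)\big(\e^{-\theta x_\ell^*}-1\big)+\sum_{\ell=1}^d D_\ell(\vecm/K)\big(\e^{\theta x_\ell^*}-1\big)\Big]=:K\,V(\vecm)\,\psi_\theta(\vecm/K),
\]
and $\psi_\theta(\vecx)=-\theta\langle\vecxf,\vecX(\vecx)\rangle+\mathcal{O}(\theta^2)$ as $\theta\downarrow0$, uniformly on compacts. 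Hence $\psi_\theta\le0$ on $\{0<\normdeux{\vecx}\le\alpha\}$ as soon as $\langle\vecxf,\vecX(\vecx)\rangle$ is positive there and bounded below on compact annuli; this is exactly the instability of $\veczer$ encoded in \ref{H4}, since near $\veczer$ one has $\langle\vecxf,\vecX(\vecx)\rangle=\langle\vecxf,D\vecX(0)\,\vecx\rangle+o(\normdeux{\vecx})$, positive on the positive orthant (this is the computation made for the numerical example above, where it reads $\langle\vecx,D^{\intercal}\vecB(0)\,\vecxf\rangle>0$). So there exist $\alpha,\theta>0$ depending only on $\vecB,\vecD$ with $\gen V(\vecm)\le0$ whenever $0<\norm{\vecm}\le\alpha K$. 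Throughout I will use the elementary sandwich $\zeta\,\norm{\vecm}\le\langle\vecxf,\vecm\rangle\le\norm{\vecxf}\,\norm{\vecm}$, with $\zeta=\min_j x_j^*$ as in \eqref{def-zeta}: it is what converts the two occurrences of $\langle\vecxf,\cdot\rangle$ below into the constants $\zeta$ and $\norm{\vecxf}$ of \eqref{probasortgen}.

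\emph{One excursion (Step 2).} For a state $\vecm$ with $\gamma K<\norm{\vecm}\le\alpha K$ set $\sigma=\inf\{s\ge0:\norm{\leproc(s)}>\alpha K\}$. Optional stopping applied to the bounded supermartingale $\big(V(\leproc(s\wedge\tau_\gamma\wedge\sigma))\big)_{s\ge0}$, keeping only the contribution of $\{\tau_\gamma<\sigma\}$ — on which $\norm{\leproc(\tau_\gamma)}\le\gamma K$, hence $V(\leproc(\tau_\gamma))\ge\exp(-\theta\gamma K\norm{\vecxf})$ — yields
\[
\proba_{\!\vecm}\big(\tau_\gamma<\sigma\big)\le\exp(\theta\gamma K\norm{\vecxf})\,V(\vecm)\le\exp\!\big(\!-\theta K(\zeta\norm{\vecm}/K-\gamma\norm{\vecxf})\big),
\]
and in particular $\proba_{\!\vecm}(\tau_\gamma<\sigma)\le\Oun\,\exp(-\theta K(\zeta\alpha-\gamma\norm{\vecxf}))$ whenever $\norm{\vecm}$ is at the re‑entry level $\approx\alpha K$.

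\emph{Iteration and counting (Step 3).} Set $\sigma_0=0$ if $\norm{\vecn}\le\alpha K$ and $\sigma_0=\inf\{s:\norm{\leproc(s)}\le\alpha K\}$ otherwise (note $\{\sigma_0=\infty\}\subseteq\{\tau_\gamma=\infty\}$), then recursively $\sigma_{2i+1}=\inf\{s>\sigma_{2i}:\norm{\leproc(s)}>\alpha K\}$ and $\sigma_{2i+2}=\inf\{s>\sigma_{2i+1}:\norm{\leproc(s)}\le\alpha K\}$. As $\norm{\leproc(\cdot)}$ moves by unit steps, on $\{\tau_\gamma\le t\}$ the time $\tau_\gamma$ lies in some $[\sigma_{2i},\sigma_{2i+1})$ with $\sigma_{2i}\le t$ and $\norm{\leproc(\sigma_{2i})}\le\alpha K$ (equal to $\norm{\vecn}$ when $i=0,\ \sigma_0=0$, and $\approx\alpha K$ otherwise). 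Conditioning on $\mathcal{F}_{\sigma_{2i}}$, applying Step~2 and summing over $i$,
\[
\proba_{\!\vecn}\big(\tau_\gamma\le t\big)\le\exp\!\big(\!-\theta K(\zeta(\norm{\vecn}/K\wedge\alpha)-\gamma\norm{\vecxf})\big)+\Oun\,\exp\!\big(\!-\theta K(\zeta\alpha-\gamma\norm{\vecxf})\big)\;\E_{\!\vecn}\!\big[\#\{i\ge1:\sigma_{2i}\le t\}\big].
\]
Every $i\ge1$ with $\sigma_{2i}\le t$ is a downcrossing of the level $\approx\alpha K$ on $[0,t]$, i.e. a death jump from a state $\vecm$ with $\norm{\vecm}\approx\alpha K$, whose intensity $K\sum_\ell D_\ell(\vecm/K)$ is $\le C_\alpha K$ because $\vecm/K$ then lies in the fixed compact set $\{\normdeux{\vecx}\le2\alpha\}$. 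A compensator (Dynkin) argument therefore gives $\E_{\!\vecn}[\#\{i\ge1:\sigma_{2i}\le t\}]\le 1+C_\alpha K t$, \emph{uniformly} in $\vecn\in\domaine$. Substituting, absorbing the factor $K$ by slightly decreasing $\theta$ and $\alpha$, and relabelling $\theta\rightsquigarrow\delta$, one arrives at \eqref{probasortgen}; the constraint $0\le\gamma\le1\wedge\alpha/\norm{\vecxf}$ is exactly what keeps the two exponents nonnegative.

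\emph{Where the difficulty lies.} Two points require care. The first is the verification in Step~1 that $\veczer$ being a repeller really yields $\langle\vecxf,\vecX(\vecx)\rangle>0$ on a punctured neighbourhood of $\veczer$ in $\rpd$, with a positive lower bound on annuli: this is the only place where the sign of the vector field near $\veczer$ intervenes, and it is what makes $V$ a genuine supermartingale. The second — and in my view the real obstacle — is the bound in Step~3 on the number of returns to $\{\norm{\cdot}\le\alpha K\}$, which must hold \emph{uniformly in the initial state} $\vecn\in\domaine$: one cannot afford a global bound on the jump intensity of $\leproc$ (it grows with $\norm{\leproc}$, hence with $\norm{\vecn}$ at short times), which is why one counts downcrossings of the single fixed level $\alpha K$, where the death intensity is at most a constant times $K$, rather than all jumps of the process.
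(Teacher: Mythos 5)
Your overall strategy is sound, and it genuinely differs from the paper's in execution. The paper also works with the scalar observable $\langle\vecxf,\cdot\rangle$ and the same drift inequality near $\veczer$, but instead of your untruncated exponential it uses the truncated Lyapunov function $\psi(\vecn)=\e^{-\delta(\langle\vecn,\vecxf\rangle\wedge\,\alpha K)}$, which is constant far from $\veczer$ and therefore satisfies $\gen\psi\le\Oun K\e^{-\alpha\delta K}$ \emph{everywhere} on $\domaine$. A single application of It\^o's formula up to $t\wedge\tau$ then yields both terms of \eqref{probasortgen} at once: the initial value $\psi(\vecn)$ produces the first term and the time-integrated generator bound produces the $t\,\e^{-\Oun K}$ term. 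This removes the need for your excursion decomposition and for any counting of returns. In exchange, your route makes the origin of the factor $t$ more transparent (it literally counts re-entries into the dangerous region), and your compensator bound on the expected number of downcrossings of the fixed level $\alpha K$ by $1+C_\alpha Kt$ is correct and uniform in $\vecn$, precisely because the death intensity is only evaluated on a fixed compact shell; your diagnosis that counting all jumps would fail is right. Both arguments handle the residual factor $K$ and the exact constants in the exponents with the same degree of informality, so I will not hold that against you.

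The one step you must repair is the justification of the drift inequality in Step 1. You assert that the positivity of $\langle\vecxf,\vecX(\vecx)\rangle$ on a punctured neighbourhood of $\veczer$ in $\rpd$ ``is exactly the instability of $\veczer$ encoded in H.4'', via $\langle\vecxf,D\vecX(0)\,\vecx\rangle>0$ on the positive orthant. That implication is false: \ref{H4} only says that $D\vecX(0)$ has an unstable direction and that the field points inward on the boundary; it does not make the form $\vecx\mapsto\langle\vecxf,D\vecX(0)\,\vecx\rangle$ positive on the whole orthant. Note that $D\vecX(0)=D\vecB(0)-D\vecD(0)$, and \eqref{cond:mort} in fact forces every entry of $D\vecD(0)$ to be at least $\xi>0$, so the death part competes with the birth part in general; the identity $D\vecX(0)=D\vecB(0)$ you borrow from the numerical example holds there only because $\vecD$ is quadratic at $\veczer$ in that example. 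The correct source of the inequality is \ref{H1} together with \ref{H3}: writing $\langle\vecX(\vecx),\vecxf\rangle=\langle\vecX(\vecx),\vecx\rangle-\langle\vecX(\vecx),\vecx-\vecxf\rangle$, using \eqref{Xalpha} and $\langle\vecX(\vecx),\vecx\rangle=\mathcal{O}(\normdeux{\vecx}^{2})$ near $\veczer$, one gets $\langle\vecX(\vecx),\vecxf\rangle\ge\tfrac{\beta}{2}\normdeux{\vecxf}^{2}\,\normdeux{\vecx}$ for $\normdeux{\vecx}$ small; this is exactly \eqref{soleilcouchant} in the paper. With that substitution your supermartingale property, and hence the rest of your argument, goes through.
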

Taking $\gamma=0$ in \eqref{probasortgen}, we get 
\begin{equation}\label{probasort}
 \proba_{\!\vecn}\big(T_{\veczer}\leq t\big)
\leq C\left(\exp\left(-\delta\left(\zeta\frac{\snorm{\vecn}}{K}\wedge\,\alpha\right)K\right)+\, t \exp(-\alpha\,\delta K)\right).
\end{equation}

\begin{proof}
It follows from \ref{H1} and  \ref{H3} (using Taylor's expansion of $\vecX(\vecx)$ near $\veczer$) that there exists $\alpha_{0}\in(0,R)$ (where $R$ was introduced in Assumption \ref{H3})  such that for all $\vecx\in \rpd$ satisfying $\normdeux{\vecx}\le \alpha_{\scaleto{0}{4pt}}$ we have
\begin{align}
\nonumber
\langle \vecX(\vecx), \vecxf\rangle
& \geq \beta \scaleto{\|}{10pt}\vecxf\scaleto{\|}{10pt}^{\scaleto{2}{3.6pt}} \, \normdeux{\vecx} - 2\beta\,  \normdeux{\vecx} \langle \vecx,\vecxf\rangle + \beta  \normdeux{\vecx}^{\scaleto{3}{3.6pt}} + \langle \vecX(\vecx), \vecx \rangle \\
& \geq \beta\scaleto{\|}{10pt}\vecxf\scaleto{\|}{10pt}^{\scaleto{2}{3.6pt}}\,  \normdeux{\vecx} + \Oun \normdeux{\vecx}^{\scaleto{2}{3.6pt}}
\geq \frac{\beta\,\normdeux{\vecxf}^{\scaleto{2}{3.6pt}}}{2}\,\normdeux{\vecx}.
\label{soleilcouchant}
\end{align}
For $\alpha\in(0,\alpha_{0}]$ and $\delta>0$ to be chosen later on, we define
\[
\psi(\vecn)=\e^{-\delta(\langle \vecn,\vecxf\rangle\wedge\, \alpha K)}.
\]
It is easy to verify that if $\langle \vecn, \vecxf\rangle>\alpha\,K+\normdeux{\vecxf}$ we have
\[
\gen \psi(\vecn)=0.
\]
If $\alpha K-\normdeux{\vecxf}\le \langle \vecn, \vecxf\rangle\le \alpha K+\normdeux{\vecxf}$ we have
\[
\big|\gen \psi(\vecn)\big|\le {\cal O}(K) \e^{-\alpha\delta K}.
\]
For $\langle \vecn, \vecxf\rangle\le \alpha K-\normdeux{\vecxf}$, we have $\norm{\vecn}\le \langle \vecn,\vecxf\rangle/\zeta\le \alpha K/\zeta$, where $\zeta$ is defined in \eqref{def-zeta}, and 
\[
\gen \psi(\vecn) = K \mathfrak{g}\!\left(\delta, \frac{\vecn}{K}\right) \e^{-\delta \langle \vecn,\vecxf\rangle}
\]
where the function $\mathfrak{g}$ is defined by
\[
\mathfrak{g}(s,\,\vecx)=\sum_{j=1}^{d}B_{j}(\vecx)\big(\e^{-s x_{j}^*}-1\big)+\sum_{j=1}^{d}D_{j}(\vecx)\big(\e^{s x_{j}^*}-1\big).
\]
We have
\begin{align*}
\MoveEqLeft
\mathfrak{g}(s,\vecx)=-s\sum_{j=1}^{d}\big(B_{j}(\vecx)-D_{j}(\vecx)\big)x_{j}^*\\
& \;\;\,\qquad +\sum_{j=1}^{d}B_{j}(\vecx)\big(\e^{-s x_{j}^*}-1+s x_{j}^*\big)
+\sum_{j=1}^{d}D_{j}(\vecx)\big(\e^{s x_{j}^*}-1-s x_{j}^*\big).
\end{align*}
From the differentiability of the vector fields $\vecB$ and $\vecD$ and using \eqref{soleilcouchant}, it
follows that there exists a constant $\Gamma>0$ such that, for all $0\le s\le 1$ and  $\normdeux{\vecx}<\alpha_{0}$ we have
\begin{align*}
\mathfrak{g}(s,\vecx)
& = -s\, \langle\, \vecX(\vecx)\,,\, \vecxf\rangle+ \Oun\, s^{\scaleto{2}{4pt}}\, \normdeux{\vecx} \\
& \le -s\,\frac{\beta\,\normdeux{\vecxf}^{\scaleto{2}{3.6pt}}}{2}\,\normdeux{\vecx} +\Gamma s^{2}\, \normdeux{\vecx}.
\end{align*}
Therefore we can choose $\delta>0$ and $0<\alpha<\alpha_0$ such that
\[
\sup_{\snormdeux{\vecx}\le\, \alpha}\mathfrak{g}(\delta,\vecx)<0.
\]
Therefore, for all $\vecn$, we have
\[
\gen \psi(\vecn)\le \Oun K \e^{-\alpha\delta K}.
\]
For $\tilde{\gamma}>0$ (independent of $K$), we define
\[
\tilde{\tau}_{\tilde \gamma}=\inf\big\{t\geq  0 : \langle\,  \leproc(t),\vecxf\rangle\le \tilde \gamma K\big\}.
\]
We apply Ito's formula to $\psi$ to get
\[
\E_{\vecn}\big[\psi\big(\leproc(t\wedge\tilde{\tau}_{\tilde \gamma})\big)\big]
=\psi(\vecn)+\E_{\vecn}\left[\,\int_{0}^{t\wedge\tilde{\tau}_{\tilde \gamma}}\gen\psi(\leproc(s))\dd s\right].
\]
We have
\[
\tilde{\gamma}K-\zeta \leq \langle\, \leproc(\tilde{\tau}_{\tilde \gamma}),\vecxf\rangle \leq \tilde{\gamma} K
\]
hence
\[
\psi( \leproc(\tilde{\tau}_{\tilde \gamma}))\geq \e^{-\delta(\tilde{\gamma}\wedge\, \alpha)K} \e^{-\delta \zeta}.
\]
Then
\[
\E_{\vecn}\big[\psi\big(\leproc(t\wedge\tilde{\tau}_{\tilde \gamma})\big)\big]
\geq \proba_{\!\vecn}\big(\tilde{\tau}_{\tilde \gamma}\leq t)\e^{-\delta\,(\tilde \gamma\wedge\, \alpha) K}\e^{-\delta \zeta}.
\]
Therefore
\[
\proba_{\!\vecn}\big(\tilde{\tau}_{\tilde \gamma}\leq t)\e^{-\delta\,(\tilde \gamma\wedge\,\alpha) K}\e^{-\delta \zeta}
\le \e^{-\delta(\langle \vecn,\vecxf\rangle \wedge\,\alpha K)}+\,t\, \Oun K \e^{-\alpha\delta K}.
\]
To conclude, observe that 
\[
\proba_{\!\vecn}\big(\tau_{\gamma}\leq t)
\leq
\proba_{\!\vecn}\big(\tilde{\tau}_{\tilde \gamma}\leq t)
\]
for $\tilde{\gamma}=\gamma\, \norm{\vecxf}$ because for all $\vecn\in\domaine$,
\[
0<\zeta\, \norm{\vecn} \leq \langle \vecn,\vecxf \rangle
\leq \norm{\vecn}\sup_{j=1,\ldots,d} x^*_{j} \leq \norm{\vecn} \scaleto{\|}{10pt}\vecxf\scaleto{\|}{10pt}_{\scaleto{1}{4pt}}
\] 
and $\| \leproc(\tau_{\gamma})\|_{\scaleto{1}{4pt}}\leq \gamma K$.
\end{proof}
 
We have the following result.

\begin{proposition}
\label{pre-theorem-ergodic}
For all bounded functions $h:\domaine\to\real$, $t\ge 0$, $\vecn\in \domaine$, and $K>K_{0}$, we have
\[ 
\left|\E_{\vecn}\left[h\big(\leproc(t)\big)\right]-\nuk(h)\right|
\le \Oun \|h\|_{\scaleto{\infty}{3pt}}\Big(\e^{-\delta\left(\zeta\frac{\|\vecn\|_{\scaleto{1}{3pt}}}{K}\wedge\,\alpha\right) K}
+\, t \e^{-\alpha\delta K}+\e^{-\frac{a t}{\log K}}\Big)
\]
where $\alpha, \delta$ and $\zeta$ are defined in Lemma \ref{sortpas}, and $a$ and $K_0$ are defined in Theorem \ref{decor-thm}.
\end{proposition}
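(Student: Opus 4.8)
The plan is to obtain the estimate by combining the total-variation decorrelation bound of Theorem~\ref{decor-thm} with the extinction-time tail bound~\eqref{probasort}, i.e.\ Lemma~\ref{sortpas} specialized to $\gamma=0$. No new probabilistic input is needed: the proposition is essentially a repackaging of these two already-proven estimates.

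First I would separate the contribution of trajectories that have already been absorbed by time $t$. Since $\veczer$ is absorbing, on the event $\{T_{\veczer}\le t\}$ one has $\leproc(t)=\veczer$, so
\[
\E_{\vecn}\!\left[h\big(\leproc(t)\big)\right]
= \E_{\vecn}\!\left[h\big(\leproc(t)\big)\,\un_{\{t<T_{\veczer}\}}\right] + h(\veczer)\,\proba_{\vecn}\big(T_{\veczer}\le t\big).
\]
Subtracting $\nuk(h)$ and writing $\nuk(h)=\proba_{\vecn}(t<T_{\veczer})\,\nuk(h)+\proba_{\vecn}(T_{\veczer}\le t)\,\nuk(h)$, the difference $\E_{\vecn}[h(\leproc(t))]-\nuk(h)$ splits into
\[
\Big(\E_{\vecn}\!\left[h(\leproc(t))\un_{\{t<T_{\veczer}\}}\right]-\proba_{\vecn}(t<T_{\veczer})\,\nuk(h)\Big)
-\proba_{\vecn}(T_{\veczer}\le t)\,\nuk(h)
+ h(\veczer)\,\proba_{\vecn}(T_{\veczer}\le t).
\]

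Then I would bound the three terms. The first is precisely the pairing of $h$ with the signed (sub-probability) measure $\proba_{\vecn}(\leproc(t)\in\cdot\,,\,t<T_{\veczer})-\proba_{\vecn}(t<T_{\veczer})\nuk(\cdot)$, so by~\eqref{decor} it is at most $2\|h\|_{\infty}\e^{-at/\log K}$ in absolute value. The second and third terms are each at most $\|h\|_{\infty}\,\proba_{\vecn}(T_{\veczer}\le t)$, using $|\nuk(h)|\le\|h\|_{\infty}$ and $|h(\veczer)|\le\|h\|_{\infty}$. Hence
\[
\left|\E_{\vecn}\!\left[h(\leproc(t))\right]-\nuk(h)\right|
\le \|h\|_{\infty}\Big(2\,\e^{-\frac{at}{\log K}} + 2\,\proba_{\vecn}\big(T_{\veczer}\le t\big)\Big).
\]
Finally, inserting the bound $\proba_{\vecn}(T_{\veczer}\le t)\le C\big(\e^{-\delta(\zeta\norm{\vecn}/K\wedge\alpha)K}+t\,\e^{-\alpha\delta K}\big)$ from~\eqref{probasort} and absorbing the numerical constants into $\Oun$ yields exactly the claimed inequality.

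I do not expect any genuinely hard step. The only points deserving care are the correct interpretation of the total-variation bound of Theorem~\ref{decor-thm} as a bound on the pairing with an arbitrary bounded $h$ (it concerns sub-probability measures, via $|\mu(h)|\le\|h\|_{\infty}\|\mu\|_{\mathrm{TV}}$), and the degenerate regime of very small $\norm{\vecn}$ — in particular $\vecn=\veczer$, where $T_{\veczer}=0$ — in which the first exponential on the right-hand side is already $\Theta(1)$, so the inequality holds trivially because the left-hand side never exceeds $2\|h\|_{\infty}$.
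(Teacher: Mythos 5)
Your proposal is correct and follows essentially the same route as the paper: decompose according to $\{t<T_{\veczer}\}$ versus $\{T_{\veczer}\le t\}$, apply the total-variation bound \eqref{decor} to the surviving part, and control the absorbed part and the compensating $\proba_{\vecn}(T_{\veczer}\le t)\,\nuk(h)$ term via \eqref{probasort}. Your explicit treatment of the degenerate case $\vecn=\veczer$ (where the first exponential is $\Theta(1)$ and the bound is trivial) is a welcome precision that the paper leaves implicit.
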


\begin{proof}
From the bound \eqref{decor} we get
\[
\left|\E_{\vecn}\left[h\big(\leproc(t)\big)\un_{\{T_{\veczer}>t\}}\right]
-\proba\!_{\vecn}(t<T_{\veczer})\,\nuk(h)\right|
\le\Oun\, \|h\|_{\scaleto{\infty}{3pt}}\e^{-\frac{a t}{\log K}}.
\]
This implies 
\begin{align*}
\MoveEqLeft \left|\E_{\vecn}\left[h\big(\leproc(t)\big)\right]-\nuk(h)\right| \\
& \le
\left|\E_{\vecn}\left[h\big(\leproc(t)\big)\un_{\{T_{\veczer}\le t\}}\right]\right|
+\proba\!_{\vecn}(t\ge T_{\veczer})\,\nuk(h)+\Oun\,  \|h\|_{\scaleto{\infty}{3pt}}\e^{-\frac{a t}{\log K}}
\\
& \le \Oun\, \|h\|_{\scaleto{\infty}{3pt}}\left(\proba\!_{\vecn}(t\ge T_{\veczer})+\e^{-\frac{a t}{\log K}}\right)
\\
& \le \Oun\,  \|h\|_{\scaleto{\infty}{3pt}}\left(\e^{-\delta\left(\zeta\frac{\|\vecn\|_{\scaleto{1}{3pt}}}{K}\wedge\,\alpha\right)K}
+t \e^{-\alpha\delta K}+\e^{-\frac{a t}{\log K}}\right)
\end{align*}
using \eqref{probasort}. 
\end{proof}

We now extend Proposition \ref{pre-theorem-ergodic} to more general functions.
For  $q\in\zentiers_+$, we define the Banach space
 $\mathscr{F}\!\!_{\sK\!,q}$ by
\begin{equation}\label{observables}
\mathscr{F}\!\!_{\sK\!,q}
=\left\{f: \domaine \rightarrow \mathds{R} :
\|f\|_{\sK\!,q}:= \sup_{ \vecn\neq \veczer} \frac{|f(\vecn)|}{K^q+\normdeux{\vecn}^{\scaleto{q}{3.6pt}}}<+\infty\right\}.
\end{equation}

We have the following result for time-averages of functions in $\mathscr{F}\!\!_{\sK}$.
\begin{theorem}\label{pseudoergod} 
For all $K>K_0$, $f\in\mathscr{F}\!\!_{\sK\!, q}$,  $T>0$, and $\vecn\in \domaine$, we have
\begin{align*}
\MoveEqLeft \left|\E_{\vecn}\big[S_{f}(T,K)\big]-\nuk(f)\right|
\leq \Oun\, \|f\|_{\sK\!,q}\,(K^{q}+\normdeux{\vecn}^{\scaleto{q}{3.6pt}})\\
& \times \left(\frac{1}{T} + \e^{-\delta\left(\zeta\frac{\|\vecn\|_{\scaleto{1}{3pt}}}{K}\wedge\,\alpha\right)K}
+T\e^{-\alpha\delta K}+\frac{\log K}{a T}+ \big(1-\e^{-\lambda_0(K)}\big)^{\frac{1}{2}} \right)
\end{align*}
where $\alpha, \delta$ and $\zeta$ are defined in Lemma \ref{sortpas}, and $\lambda_0(K)$ is defined in \eqref{eq:lambda0}. 
\end{theorem}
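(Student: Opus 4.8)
The plan is to reduce the statement, on the main part of the time window, to the estimate for \emph{bounded} observables proved in Proposition~\ref{pre-theorem-ergodic}, treating the short interval $[0,1]$ separately with the crude moment bound of Proposition~\ref{tempscourts}; that short slice is the only source of the $\normdeux{\vecn}^{q}$ dependence. We may assume $|f(\veczer)|\le\|f\|_{\sK,q}K^{q}$ — the restriction to $\vecn\neq\veczer$ in \eqref{observables} costs nothing here since $K^{q}+\normdeux{\veczer}^{q}=K^{q}>0$ — so that $|f(\vecn)|\le\|f\|_{\sK,q}(K^{q}+\normdeux{\vecn}^{q})$ for \emph{all} $\vecn$. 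Starting from
\[
\E_{\vecn}\big[S_{f}(T,K)\big]-\nuk(f)=\frac{1}{T}\int_{0}^{T}\big(\E_{\vecn}[f(\leproc(s))]-\nuk(f)\big)\dd s ,
\]
Proposition~\ref{tempscourts} (with the same $q$) and Corollary~\ref{bornepuissance} give, for $s<1$, the uniform bound $\big|\E_{\vecn}[f(\leproc(s))]-\nuk(f)\big|\le\Oun\|f\|_{\sK,q}(K^{q}+\normdeux{\vecn}^{q})$; hence if $T\le1$ the left side is $\le\Oun\|f\|_{\sK,q}(K^{q}+\normdeux{\vecn}^{q})\le\tfrac{1}{T}\,\Oun\|f\|_{\sK,q}(K^{q}+\normdeux{\vecn}^{q})$, and if $T>1$ the contribution of $\int_{0}^{1}$, after dividing by $T$, is $\Oun\|f\|_{\sK,q}(K^{q}+\normdeux{\vecn}^{q})/T$.

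It remains to estimate $\tfrac{1}{T}\int_{1}^{T}\big(\E_{\vecn}[f(\leproc(s))]-\nuk(f)\big)\dd s$ when $T>1$. The key object is the auxiliary function $g(\vecm):=\E_{\vecm}\big[f(\leproc(1))\,\un_{\{\leproc(1)\neq\veczer\}}\big]$, which is \emph{bounded}: on $\{\leproc(1)\neq\veczer\}$ one has $|f(\leproc(1))|\le\|f\|_{\sK,q}(K^{q}+\normdeux{\leproc(1)}^{q})$ with $\normdeux{\cdot}\le\norm{\cdot}$, so Corollary~\ref{auxbornees} (uniform in the starting state) yields $\|g\|_{\infty}\le\Oun\|f\|_{\sK,q}K^{q}$; note also $g(\veczer)=0$. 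The Markov property at time $s-1$ gives, for $s\ge1$,
\[
\E_{\vecn}[f(\leproc(s))]=\E_{\vecn}\big[g(\leproc(s-1))\big]+f(\veczer)\,\proba_{\vecn}(T_{\veczer}\le s),
\]
since $\E_{\vecm}[f(\leproc(1))]=g(\vecm)+f(\veczer)\proba_{\vecm}(T_{\veczer}\le1)$ and $\veczer$ is absorbing; while the quasi-stationarity identity \eqref{pte-qsd} at time $1$ gives $\nuk(g)=\E_{\nuk}\big[f(\leproc(1))\un_{\{T_{\veczer}>1\}}\big]=\e^{-\lambda_{0}(K)}\nuk(f)$, whence
\[
\big|\nuk(g)-\nuk(f)\big|=\big(1-\e^{-\lambda_{0}(K)}\big)|\nuk(f)|\le\Oun\|f\|_{\sK,q}K^{q}\,\big(1-\e^{-\lambda_{0}(K)}\big)^{1/2}
\]
(the passage to the square root is a harmless weakening, $1-\e^{-\lambda_{0}(K)}\le(1-\e^{-\lambda_{0}(K)})^{1/2}$; it is, incidentally, exactly what a Cauchy--Schwarz estimate of this defect would produce). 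Decomposing $\E_{\vecn}[f(\leproc(s))]-\nuk(f)$ as $\big(\E_{\vecn}[g(\leproc(s-1))]-\nuk(g)\big)+\big(\nuk(g)-\nuk(f)\big)+f(\veczer)\proba_{\vecn}(T_{\veczer}\le s)$, Proposition~\ref{pre-theorem-ergodic} applied to the \emph{bounded} $g$ at time $s-1$ bounds the first bracket by $\Oun\|f\|_{\sK,q}K^{q}\big(\e^{-\delta(\zeta\snorm{\vecn}/K\wedge\alpha)K}+(s-1)\e^{-\alpha\delta K}+\e^{-a(s-1)/\log K}\big)$, the second bracket is handled above, and $\big|f(\veczer)\big|\proba_{\vecn}(T_{\veczer}\le s)\le\Oun\|f\|_{\sK,q}K^{q}\big(\e^{-\delta(\zeta\snorm{\vecn}/K\wedge\alpha)K}+s\,\e^{-\alpha\delta K}\big)$ by \eqref{probasort}. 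Integrating over $s\in[1,T]$ and dividing by $T$ replaces each $s$-linear term $s\,\e^{-\alpha\delta K}$ by $\le T\e^{-\alpha\delta K}$, each $\e^{-a(s-1)/\log K}$ by $\le\tfrac{\log K}{aT}$, and leaves the $\vecn$-dependent exponential and the $\big(1-\e^{-\lambda_{0}(K)}\big)^{1/2}$ terms unchanged; adding the $[0,1]$ contribution and using $K^{q}\le K^{q}+\normdeux{\vecn}^{q}$ reproduces exactly the asserted estimate.

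The genuinely delicate point is the reduction to a bounded observable \emph{without degrading the exponential rates}: one wants a bounded $g$ with $\|g\|_{\infty}=\Oun\|f\|_{\sK,q}K^{q}$ that nonetheless pins down $\nuk(f)$ up to an exponentially small defect, and this is precisely what $g=\E_{\cdot}\big[f(\leproc(1))\un_{\{\leproc(1)\neq\veczer\}}\big]$ achieves — boundedness from the uniform time-$1$ moment bound of Corollary~\ref{auxbornees}, and the relation $\nuk(g)=\e^{-\lambda_{0}(K)}\nuk(f)$ from the quasi-stationarity identity \eqref{pte-qsd}. Because $g$ is then fed into Proposition~\ref{pre-theorem-ergodic}, the constants $\delta,\alpha,a$ of the final bound are inherited verbatim from Lemma~\ref{sortpas} and Theorem~\ref{decor-thm}. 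Everything else — the $\veczer$-reaching correction $f(\veczer)\proba_{\vecn}(T_{\veczer}\le s)$, the $\e^{-\lambda_{0}(K)}$ defect between $\nuk(g)$ and $\nuk(f)$, and the confinement of the $\normdeux{\vecn}^{q}$ dependence to the slice $[0,1]$ — is routine bookkeeping.
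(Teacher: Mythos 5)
Your proof is correct and follows essentially the same route as the paper: isolate the slice $[0,1]$ via Proposition \ref{tempscourts}, smooth $f$ by the time-$1$ semigroup into a bounded observable $g$ (bounded thanks to Corollary \ref{auxbornees}), feed $g$ into Proposition \ref{pre-theorem-ergodic}, and control $\nuk(g)-\nuk(f)$. The only (harmless) variation is that you insert the indicator $\un_{\{\leproc(1)\neq\veczer\}}$ into the definition of $g$, which makes $\nuk(g)=\e^{-\lambda_0(K)}\nuk(f)$ exact and replaces the Cauchy--Schwarz step of Lemma \ref{f-et-g} by the explicit correction $f(\veczer)\,\proba_{\vecn}(T_{\veczer}\le s)$ controlled by \eqref{probasort}.
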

\begin{remark}
One can check that if one modifies slightly the definition of the time average \eqref{statf} by integrating from $1$ to $T+1$,
then one can remove the term $\normdeux{\vecn}^{\scaleto{q}{4pt}}$ from the previous estimate.
\end{remark}
\begin{proof}
For $f\in\mathscr{F}\!\!_{\sK\!, q}$,  Corollary \ref{bornepuissance} gives
\[
\big|\nuk(f)\big| \leq \Oun K^q \|f\|_{\sK\!,q}.
\]
By Proposition \ref{tempscourts} we have
\[
\left|\frac{1}{T}\,\E_{\vecn}
\left[\,\int_{0}^{1\wedge T}f\big(\leproc(s)\big)\dd s\right]\right|\le \Oun\, \|f\|_{\sK\!,q}\, \frac{K^{q}+\normdeux{\vecn}^{\scaleto{q}{3.6pt}}}{T}.
\]
Hence for $T\le 1$ we get
\[
\left|\E_{\vecn}\big[S_{f}(T,K)\big]-\nuk(f)\right|\le \Oun\,  \|f\|_{\sK\!,q}\big(K^{q}
+\normdeux{\vecn}^{\scaleto{q}{3.6pt}}\big)
\left(\frac{1}{T}+1\right).
\]
For $T>1$, we have by the Markov property that
\begin{align*}
\frac{1}{T}\,\E_{\vecn}\!\left[\int_{1}^{T}
f\big(\leproc(s)\big)\dd s\right]
&=\frac{1}{T}\,\int_{1}^{T}\E_{\vecn}\!\left[\E_{\sleproc(s-1)}\!\left[f\big(\leproc(1)\big)\right]\right]\dd s\\
& =\frac{1}{T}\,\int_{0}^{T-1}\E_{\vecn}\!\left[g\big(\leproc(s)\big)\right]\dd s
\end{align*}
where we set
\begin{equation}\label{def-g}
g(\vecm)=\E_{\vecm}\!\left[f\big(\leproc(1)\big)\right].
\end{equation}
By Corollary \ref{auxbornees}, the function $g$ is bounded and
\begin{equation}\label{ginfini}
\|g\|_{\infty}\le \Oun\, \|f\|_{\sK\!,q}\,K^{q}.
\end{equation}
Applying Proposition \ref{pre-theorem-ergodic} to  $g$ thus gives
\begin{align*}
\MoveEqLeft
\left|\E_{\vecn}\!\left[g\big(\leproc(s)\big)\right]-\nuk(g)\right|\\
& \le \Oun\, \|f\|_{\sK\!,q}\, K^q \left(\e^{-\delta\left(\zeta\frac{\|\vecn\|_{\scaleto{1}{3pt}}}{K}\wedge\,\alpha\right)K}
+s\e^{-\alpha\delta K}+\e^{-\frac{a s}{\log K}}\right). 
\end{align*}
Integrating over $s\in [0,T-1]$ yields
\begin{align*}
\MoveEqLeft
\left| \frac{1}{T}\,\int_{0}^{T-1}\E_{\vecn}\!\left[g\big(\leproc(s)\big)\right]\dd s - \frac{T-1}{T}\, \nuk(g)\right|\\
& \leq \Oun\, \|f\|_{\sK\!,q} \,\frac{K^q}{T}\left((T-1)\e^{-\delta\left(\zeta\frac{\|\vecn\|_{\scaleto{1}{3pt}}}{K}\wedge\,\alpha\right)K}
+\,(T-1)^2\e^{-\alpha\delta K}+\frac{\log K}{a}\right).
\end{align*}
Using Lemma \ref{f-et-g} (stated and proved right after this proof), we finally obtain
\begin{align*}
\MoveEqLeft
\left|\E_{\vecn}\big[S_{f}(T,K)\big]-\nuk(f)\right|\\
& \le  \Oun\, \|f\|_{\sK\!,q}\,\frac{K^{q}+\normdeux{\vecn}^{\scaleto{q}{3.6pt}}}{T}\\
& \quad +\Oun\, \|f\|_{\sK\!,q}  \frac{K^q}{T}\!
\left((T-1)\e^{-\delta\left(\zeta\frac{\|\vecn\|_{\scaleto{1}{3pt}}}{K}\wedge\,\alpha\right)K}+\, (T-1)^2\e^{-\alpha\delta K}+\frac{\log K}{a}\right)\\
& \quad +  \Oun\, \|f\|_{\sK\!,q}\, K^q \big(1-\e^{-\lambda_0(K)}\big)^{\frac{1}{2}} +  \frac{1}{T} \nuk(g) + 
\nuk(f)\un_{\{T\leq 1\}}\\
& \leq \Oun\, \|f\|_{\sK\!,q}\, \big(K^{q}+\normdeux{\vecn}^{\scaleto{q}{3.6pt}}\big)\Big(\frac{1}{T}\Big(2+\frac{\log K}{a }\Big) +
\e^{-\delta\left(\zeta\frac{\|\vecn\|_{\scaleto{1}{3pt}}}{K}\wedge\,\alpha\right)K} +\, T\!\e^{-\delta\,\alpha K}\\
& \quad + \big(1-\e^{-\lambda_0(K)}\big)^{\frac{1}{2}} + \un_{\{T\leq 1\}}\Big).
\end{align*}
This finishes the proof of the theorem.
\end{proof}

We used the following lemma in the previous proof.
\begin{lemma}
\label{f-et-g}
For $f\in \mathscr{F}\!\!_{\sK\!,q}$ and $g$ defined in \eqref{def-g} we have
\[
|\nuk(g)-\nuk(f)|\leq \Oun K^q\,  \|f\|_{\sK\!,q}\big(1-\e^{-\lambda_0(K)}\big)^{\frac{1}{2}}.
\]
\end{lemma}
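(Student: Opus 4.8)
The plan is to compare $\nuk(g)$ directly with $\nuk(f)$ by exploiting the quasi-stationarity of $\nuk$. First, by Fubini and the definition \eqref{def-g} of $g$,
\[
\nuk(g)=\int\E_{\vecm}\big[f(\leproc(1))\big]\dd\nuk(\vecm)=\E_{\snuk}\big[f(\leproc(1))\big],
\]
and I would then split this expectation according to whether extinction has occurred before time $1$:
\[
\nuk(g)=\E_{\snuk}\big[f(\leproc(1))\un_{\{T_{\veczer}>1\}}\big]+\E_{\snuk}\big[f(\leproc(1))\un_{\{T_{\veczer}\le 1\}}\big].
\]

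For the first term, the identity \eqref{pte-qsd} at time $t=1$ gives $\proba_{\snuk}\big(\leproc(1)\in\cdot\,,\,T_{\veczer}>1\big)=\e^{-\lambda_0(K)}\nuk(\cdot)$, hence $\E_{\snuk}[f(\leproc(1))\un_{\{T_{\veczer}>1\}}]=\e^{-\lambda_0(K)}\nuk(f)$. Consequently
\[
\nuk(g)-\nuk(f)=\big(\e^{-\lambda_0(K)}-1\big)\nuk(f)+\E_{\snuk}\big[f(\leproc(1))\un_{\{T_{\veczer}\le 1\}}\big],
\]
and it remains to estimate the two terms on the right. The first is handled by Corollary \ref{bornepuissance}, which for $f\in\mathscr{F}\!\!_{\sK,q}$ gives $|\nuk(f)|\le\Oun K^q\|f\|_{\sK,q}$; since $0\le 1-\e^{-\lambda_0(K)}\le 1$ we have $1-\e^{-\lambda_0(K)}\le(1-\e^{-\lambda_0(K)})^{1/2}$, so this term is $\Oun K^q\|f\|_{\sK,q}(1-\e^{-\lambda_0(K)})^{1/2}$.

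For the second term the natural move is the Cauchy--Schwarz inequality,
\[
\big|\E_{\snuk}\big[f(\leproc(1))\un_{\{T_{\veczer}\le 1\}}\big]\big|\le\big(\E_{\snuk}\big[f(\leproc(1))^{2}\big]\big)^{1/2}\,\big(\proba_{\snuk}(T_{\veczer}\le 1)\big)^{1/2},
\]
where, again by \eqref{pte-qsd} and since $\nuk$ is a probability measure, $\proba_{\snuk}(T_{\veczer}\le 1)=1-\e^{-\lambda_0(K)}$. For the moment factor I would use the membership bound $|f(\vecn)|\le\|f\|_{\sK,q}(K^{q}+\normdeux{\vecn}^{q})$, the elementary inequality $(a+b)^{2}\le 2a^{2}+2b^{2}$, and then Corollary \ref{auxbornees} integrated against $\nuk$ (bounding $\normdeux{\,\cdot\,}\le\norm{\,\cdot\,}$ on $\domaine$) to get $\E_{\snuk}[f(\leproc(1))^{2}]\le\Oun K^{2q}\|f\|_{\sK,q}^{2}$. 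This makes the second term $\Oun K^q\|f\|_{\sK,q}(1-\e^{-\lambda_0(K)})^{1/2}$ as well, and summing the two bounds yields the lemma.

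The argument is short and I do not expect a genuine obstacle; the only point that needs care is the bookkeeping around the absorbing state. On $\{T_{\veczer}\le1\}$ one has $\leproc(1)=\veczer$, so the second term is in fact exactly $f(\veczer)\,(1-\e^{-\lambda_0(K)})$; reading the defining inequality of $\mathscr{F}\!\!_{\sK,q}$ as holding at $\vecn=\veczer$ too (harmless, and automatic for the estimator functions we apply this to, all of which vanish at $\veczer$) would even give the sharper linear factor, but the stated square-root bound is all that is needed in Theorem \ref{pseudoergod}, and routing through Cauchy--Schwarz as above sidesteps any separate discussion of $f(\veczer)$. All the substance is in correctly invoking \eqref{pte-qsd} and the moment estimates of Section~\ref{moment-process}.
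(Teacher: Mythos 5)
Your proof is correct and follows essentially the same route as the paper: the same split of $\nuk(g)$ according to $\{T_{\veczer}>1\}$ versus $\{T_{\veczer}\le 1\}$, the quasi-stationarity identity \eqref{pte-qsd} for the first piece, Cauchy--Schwarz plus the moment bounds of Corollaries \ref{auxbornees} and \ref{bornepuissance} for the second. The extra remark about $\leproc(1)=\veczer$ on the extinction event is a harmless aside that the paper does not make.
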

\begin{proof}
We write
\[
\nuk(g)= \E_{\snuk}\!\big[f(\leproc(1)) \un_{\{T_{\veczer}>1\}}\big]
+
\E_{\snuk}\!\big[f(\leproc(1)) \un_{\{T_{\veczer}\leq 1\}}\big].
\]
Since $\nuk$ is a qsd, it follows by Cauchy-Schwarz inequality that 
\begin{align*}
\MoveEqLeft |\nuk(g)-\nuk(f)|\\
& \leq \big(1-\e^{-\lambda_0(K)}\big)\big|\nuk(f)\big|+ \!\left(\E_{\snuk}\!\big[f^2(\leproc(1))\big]\right)^{\frac{1}{2}} \!
\left(\E_{\snuk}\!\big[ \un_{\{T_{\veczer}\leq 1\}}\big] \right)^{\frac{1}{2}}\\
& \leq\Oun K^q\, \|f\|_{\sK\!,q} \big(1-\e^{-\lambda_0(K)}\big)^{\frac{1}{2}}
\end{align*}
where we used Corollaries \ref{auxbornees}  and \ref{bornepuissance} and the fact that under $\nuk$ the law of $T_{\veczer}
$ is exponential with  parameter $\lambda_0(K)$. The lemma is proved.
\end{proof}

\section{Fluctuation and correlation relations}
\label{chaleur}
\subsection{Proof of Theorem \ref{thm-onsager-relation}}\label{proof:onsager}

Let
\[
\widetilde{\Sigma}_{i,j}^{\sK}(t)=\E_{\snuk}\!\big[(N_{i}^{\sK}(t)-n_{i}^{\scaleto{*}{2.5pt}})(N_{j}^{\sK}(0)-n_{j}^*)\big],\; i,j=1,\ldots,d.
\]
For $1\leq i\leq d$, let $f_i(\vecn)=\langle \vecn-\vecnf,\vece{i}\rangle$.
We have, since $B_{i}(\vecxf)=D_{i}(\vecxf),\; 1\le i\le d$, and $\vecnf/K-\vecxf=\Oun/K$, 
\begin{align*}
& \frac{\dd}{\dd t}\widetilde{\Sigma}_{i,j}^{\sK}(t)\\
& =\E_{\snuk}\!\!\left[ \gen f_i(\leproc(t)) (N_{j}^{\sK}(0)-n_{j}^{\scaleto{*}{2.5pt}})\right]\\
& =K\E_{\snuk}\!\!\left[ B_{i}\!\left(\frac{\leproc(t)}{K}\right)\!\!\big(N_{j}^{\sK}(0)-n_{j}^{\scaleto{*}{2.5pt}}\big)\! \right]\!-K\E_{\snuk}\!\!\left[ D_{i}\!\left(\frac{\leproc(t)}{K}\right)\!\!\big(N_{j}^{\sK}
(0)-n_{j}^{\scaleto{*}{2.5pt}}\big)\!\right]\\
&=K\E_{\snuk}\!\!\left[ \left(B_{i}\left(\frac{\leproc(t)}{K}\right)-B_{i}\left(\frac{\vecnf}{K}
\right)\right)
\big(N_{j}^{\sK}(0)-n_{j}^{\scaleto{*}{2.5pt}}\big)\right]\\
& \quad -K\E_{\snuk}\!\!\left[ \left(D_{i}\left(\frac{\leproc(t)}{K}\right)
-D_{i}\left(\frac{\vecnf}{K}\right)\!\right)(N_{j}^{\sK}(0)-n_{j}^{\scaleto{*}{2.5pt}})\right] + \Oun.
\end{align*}
As in the previous proof, we split the integrals according to whether $\leproc(t)\in \mathcal{D}$ or
$\leproc(t)\in \mathcal{D}^c$. Using Cauchy-Schwarz inequality, Corollary \ref{bornepuissance}, and the fact that $\nuk$ is a qsd, the second contribution is exponentially small in $K$. In the first contribution, we use Taylor expansion around $\vecxf$.
The error terms are bounded by
\[
\frac{\Oun}{K}\, \E_{\snuk}\!\big[ \|\leproc(t)-K\vecxf\|_{\scaleto{2}{4pt}}^{\scaleto{2}{4pt}}\,\|\leproc(0)-K\vecxf\|_{\scaleto{2}{4pt}}\big]
+\Oun.
\]
Now we use Cauchy-Schwarz inequality, Theorem \ref{bornemom} and that $\nuk$ is a qsd to obtain
\begin{align*}
& \frac{\dd}{\dd t}\widetilde{\Sigma}_{i,j}^{\sK}(t)\\
& =\sum_{\ell=1}^{d}\left(\partial_{\ell}B_{i}\!\left(\vecxf\right)-\partial_{\ell}D_{i}\left(\vecxf\right)\right)
\E_{\snuk}\!\big[ (N_{\ell}^{\sK}(t)-n_{\ell}^{\scaleto{*}{2.5pt}}) (N_{j}^{\sK}(0)-n_{j}^{\scaleto{*}{2.5pt}})\big]\!+\!
\mathcal{O}(\sqrt K)\\
& =\sum_{\ell=1}^{d}M_{i,\ell}^{*}\, \widetilde{\Sigma}^{\sK}_{\ell,j}(t)+\mathcal{O}\big(\sqrt K\,\big).
\end{align*}
Since $M^*$ has a spectrum contained in the open left half-plane by \ref{H3}, we integrate the equation 
\[
\frac{\dd}{\dd t}\widetilde{\Sigma}^{\sK}(t)= M^*\widetilde{\Sigma}^{\sK}(t)+\mathcal{O}\big(\sqrt K\,\big)
\]
from $0$ to $\tau$ using the method of constant variation and obtain
\begin{equation*}
\widetilde{\Sigma}^{\sK}(\tau)=\e^{\tau M^*}\widetilde{\Sigma}^{\sK}(0)+\mathcal{O}\big(\sqrt K\,\big).
\end{equation*}
We arrive at the desired relation by using \eqref{nfvsxf}.

\subsection{Proof of Theorem \ref{thm-Kubo}}\label{proof:kuboeq}

Recall that 
\[
\Sigma^{\sK}_{p,q}=\Sigma^{\sK}_{p,q}(0)=\int \big(n_{p}-\mu_{p}^{\sK}\big) \big(n_{q}-\mu_{q}^{\sK}\big) \dd\nu\!_{\sK}(\vecn).
\]
We will first do the proof with the following matrix instead of $\Sigma^{\sK}$:
\[
\widetilde\Sigma^{\sK}_{i,j}=\int (n_{i}-n_{i}^{\scaleto{*}{2.5pt}})(n_{j}-n_{j}^{\scaleto{*}{2.5pt}}) \dd \nuk(\vecn).
\]
On the one hand we have by \eqref{eq:miracle} 
\[
\big\langle \gen^{\dagger}\nuk,(n_{i}-n^{\scaleto{*}{2.5pt}}_{i})(n_{j}-n^{\scaleto{*}{2.5pt}}_{j})\big\rangle=-\lambda_{0}(K)
\big\langle\nuk,(n_{i}-n^{\scaleto{*}{2.5pt}}_{i})(n_{j}-n^{\scaleto{*}{2.5pt}}_{j})\big\rangle.
\]
By Theorem \ref{bornemom} and \eqref{eq:lambda0} the right-hand side of this equation is exponentially small in $K$. 
On the other hand,  using formula \eqref{gene} we have
\begin{align*}
\MoveEqLeft
\big\langle \gen^{\dagger}\nuk,(n_{i}-n^{\scaleto{*}{2.5pt}}_{i})(n_{j}-n^{\scaleto{*}{2.5pt}}_{j})\big\rangle=
\big\langle \nuk,\gen\big((n_{i}-n^{\scaleto{*}{2.5pt}}_{i})(n_{j}-n^{\scaleto{*}{2.5pt}}_{j})\big)\big\rangle\\
& =K\sum_{\ell=1}^{d}\Big\langle\nuk,B_{\ell}\left(\frac{\vecn}{K}\right)\big((n_{j}-n^{\scaleto{*}{2.5pt}}_{j})\,\delta_{i,\ell}+
(n_{i}-n^{\scaleto{*}{2.5pt}}_{i})\,\delta_{j,\ell} +\delta_{i,\ell}\,\delta_{j,\ell}\big)\Big\rangle\\
& \quad +K\sum_{\ell=1}^{d}\Big\langle\nuk,D_{\ell}\left(\frac{\vecn}{K}\right)\big(-(n_{j}-n^{\scaleto{*}{2.5pt}}_{j})\delta_{i,\ell}-(n_{i}-n^{\scaleto{*}{2.5pt}}_{i})
\delta_{j,\ell}+\delta_{i,\ell}\delta_{j,\ell}\big)\Big\rangle\\
& =K\Big\langle\nuk,\Big(B_{i}\left(\frac{\vecn}{K}\right)-D_{i}\left(\frac{\vecn}{K}\right)\Big)(n_{j}-n^{\scaleto{*}{2.5pt}}_{j})\Big\rangle\\
& \quad +K\Big\langle\nuk,\Big(B_{j}\left(\frac{\vecn}{K}\right)-D_{j}\left(\frac{\vecn}{K}\right)\Big)\,(n_{i}-n^{\scaleto{*}{2.5pt}}_{i})
\Big\rangle\\
& \quad +K \Big\langle\nuk,B_{i}\left(\frac{\vecn}{K}\right)+D_{i}\left(\frac{\vecn}{K}\right)\Big\rangle\, \delta_{i,j}.
\end{align*}
We split each integral by separating integration over $\mathcal{D}$ (defined in \eqref{boule}) and $\mathcal{D}^c$.
Inside $\mathcal{D}^c$, we apply Corollary \ref{bornepuissance} and use the assumption that $B$ and $D$ are polynomially 
bounded. Inside $\mathcal{D}$, we use Taylor's formula around $\vecxf$ for the functions $B_i(\vecn/K)-D_i(\vecn/K)$, and
$B_i(\vecn/K)+D_i(\vecn/K)$. We also use that $B_{i}(\vecxf)=D_{i}(\vecxf),\; 1\le i\le d$, and $\vecnf/K-\vecxf=
\mathcal{O}\big(\frac{1}{K}\big)$. The error terms are then bounded by
\[
\mathcal{O}\left(\frac{1}{K}\right)\int \|\vecn-K\vecxf\|_{\scaleto{2}{4pt}}^{\scaleto{3}{4pt}} \dd \nuk(\vecn)\quad\text{and}\quad
\Oun  \int \|\vecn-K\vecxf\|_{\scaleto{2}{4pt}} \dd \nuk(\vecn)
\]
respectively. Using Theorem \ref{bornemom}, both bounds are of order $\sqrt{K}$.
We obtain
\[
\sum_{\ell=1}^{d}M_{i,\ell}^{*}\,\widetilde{\Sigma}^{\sK}_{\ell,j}+
\sum_{\ell=1}^{d}M_{j,\ell}^*\,\widetilde{\Sigma}^{\sK}_{\ell,i}+2K B_{i}\left(\vecxf\right)\delta_{i,j}
=\mathcal{O}\big(\sqrt K\,\big)
\]
which can be written in the more compact form
\begin{equation}\label{fluceq}
M^* \widetilde{\Sigma}^{\sK}+\widetilde{\Sigma}^{\sK} {M^*}^{\intercal}+2\EuScript{D}^{\sK}=\mathcal{O}\big(\sqrt K\,\big)
\end{equation}
where $\EuScript{D}^{\sK}$ is the diagonal matrix of averages birth (or death) rates. To finish the proof, it remains to
replace $\widetilde\Sigma^{\sK}$ by $\Sigma^{\sK}$. This is done by using \eqref{comparaisondescov}.
\begin{remark}
Note that each term on the left hand side is of  order $K$, see Corollary \ref{variance}.
\end{remark}
\begin{remark} We will see in Appendix C that the qsd $\,\nu_{K}\,$ around $\vecnf$ is well approximated at scale $\sqrt{K}$ by a Gaussian distribution. 
Dividing out \eqref{fluceq} by $2K$ and taking the limit $K\to\infty$, we recover  Relation \eqref{pierre}, as expected from Theorem \ref{confinement}. 
\end{remark}

\section{Variance estimates for the estimators}
\label{sec:stat}

It is straightforward to apply Theorem \ref{pseudoergod} to $S^{{\scriptscriptstyle\vmu}}(T,K)$,
$S^{{\scriptscriptstyle C}}(T,\tau,K)$, $S^{{\scriptscriptstyle \EuScript{D}}}(T,K)$, and 
$S^{{\scriptscriptstyle \Sigma}}(T,K)$, which are defined respectively in \eqref{stat-mu},  \eqref{stat-sigma}, \eqref{stat-D}, 
and \eqref{stat-MSigma}. This gives the bound \eqref{control-est-mu} on $S^{{\scriptscriptstyle\vmu}}(T,K)$ anounced in  
Section \ref{sec:intro}. The bounds for the other estimators all have the same structure. We will not state them.

In this section we prove two variance estimates for any time average $S_{f}(T,K)$ with $f\in \mathscr{F}\!\!_{\sK\!,q}$. 
In the first one, one starts from anywhere in $\domaine$, while in the second one the starting distribution is the qsd.
Recall that $S^{{\scriptscriptstyle \Sigma}}(T,K)=S^{{\scriptscriptstyle C}}(T,0,K)$. We will only give the proofs of these
estimates for $S^{{\scriptscriptstyle \Sigma}}(T,K)$, since manipulating $S^{{\scriptscriptstyle C}}(T,\tau,K)$ is cumbersome but 
otherwise the proofs are the same.

\begin{proposition}\label{varcorn}
There exist strictly positive constants $\delta', \zeta', \alpha', \theta', C'$ and $K_0\geq 2$ such that,  for all $K\geq K_0$,
$f\in\mathscr{F}\!\!_{\sK\!,q}$ (see Definition \ref{observables}), $T\geq 0$, and $\vecn\neq \veczer$,  we have
\begin{align*}
\MoveEqLeft
\E_{\vecn}\Big[\big(S_{f}(T,K)-\nuk(f)\big)^{\scaleto{2}{4pt}}\Big]
\leq C' \|f\|_{\sK\!,q}^{\scaleto{2}{4pt}}(c_q\norm{\vecn}^{\scaleto{q}{3.8pt}} +K^q)\\
& \times
\left(
\frac{\norm{\vecn}^{\scaleto{q}{3.6pt}} +K^q \log K}{T\vee 1}+ 
K^q  \e^{-\delta'\big(\zeta' \frac{\|\vecn\|_{\scaleto{1}{3pt}}}{K}\wedge \,\alpha'\big)K}+ T K^q \e^{-\theta' K} 
\right)
\end{align*}
where $c_q$ was defined in Proposition \ref{tempscourts}.
\end{proposition}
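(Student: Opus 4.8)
The strategy is the classical one for ergodic-type averages of a Markov process: expand the square into a double time-integral of two-time correlations, and control those correlations via the Markov property, the mixing estimate of Proposition~\ref{pre-theorem-ergodic}, and the moment bounds of Section~\ref{moment-process}. First I would dispose of small $T$: for $T\leq 1$, Cauchy--Schwarz in $s$ gives $\E_{\vecn}[S_f(T,K)^2]\leq \frac1T\int_0^T\E_{\vecn}[f(\leproc(s))^2]\dd s$, and since $|f(\vecn)|\leq\|f\|_{\sK\!,q}(K^q+\normdeux{\vecn}^{q})$, Proposition~\ref{tempscourts} bounds this by $\Oun\|f\|_{\sK\!,q}^2(K^{2q}+\normdeux{\vecn}^{2q})$; together with $|\nuk(f)|\leq\Oun K^q\|f\|_{\sK\!,q}$ (Corollary~\ref{bornepuissance}) this is exactly the claimed bound when $T\vee 1=1$. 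So assume $T>1$, put $\bar f=f-\nuk(f)$, and write
\[
\E_{\vecn}\big[(S_f(T,K)-\nuk(f))^{2}\big]
=\frac{2}{T^{2}}\int_0^T\!\!\dd s\int_0^s\!\!\dd u\;\E_{\vecn}\big[\bar f(\leproc(s))\,\bar f(\leproc(u))\big].
\]

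Next I would split the integration region into (i) $\{u\leq 1\}$, (ii) $\{u>1,\ s-u\leq 1\}$, and (iii) $\{u>1,\ s-u>1\}$. On (i) and (ii) I bound the integrand crudely by Cauchy--Schwarz, $|\E_{\vecn}[\bar f(\leproc(s))\bar f(\leproc(u))]|\leq(\E_{\vecn}[\bar f(\leproc(s))^2])^{1/2}(\E_{\vecn}[\bar f(\leproc(u))^2])^{1/2}$, using $\E_{\vecn}[f(\leproc(t))^2]\leq\Oun\|f\|_{\sK\!,q}^2(K^{2q}+\normdeux{\vecn}^{2q}\un_{\{t<1\}})$ from Proposition~\ref{tempscourts} and Corollary~\ref{auxbornees}. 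Region (ii) has area $\Oun T$ with bounded integrand $\Oun\|f\|_{\sK\!,q}^2K^{2q}$ (since $u>1$ there), contributing $\Oun\|f\|_{\sK\!,q}^2K^{2q}/T$; region (i) has area $\Oun T$, but the $\normdeux{\vecn}$-dependence enters only on its sub-slab $\{s<1\}$ of area $\Oun$, so after dividing by $T^2$ it contributes at most $\Oun\|f\|_{\sK\!,q}^2\big(K^{2q}+(K^q+\normdeux{\vecn}^{q})^2\big)/T$. Both are absorbed into the stated bound (enlarging the constant $c_q$ from Proposition~\ref{tempscourts} so that $c_q\geq 1$ if needed).

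Region (iii) is the main term. By the Markov property at time $u$,
\[
\E_{\vecn}\big[\bar f(\leproc(s))\bar f(\leproc(u))\big]
=\E_{\vecn}\Big[\bar f(\leproc(u))\big(\E_{\sleproc(u)}[f(\leproc(s-u))]-\nuk(f)\big)\Big].
\]
Since $s-u>1$, I use the device of the proof of Theorem~\ref{pseudoergod}: with $g(\vecm)=\E_{\vecm}[f(\leproc(1))]$ one has $\E_{\vecm}[f(\leproc(s-u))]=\E_{\vecm}[g(\leproc(s-u-1))]$, $g$ is bounded with $\|g\|_{\scaleto{\infty}{3pt}}\leq\Oun\|f\|_{\sK\!,q}K^q$ (Corollary~\ref{auxbornees}), and Proposition~\ref{pre-theorem-ergodic} applied to $g$, together with Lemma~\ref{f-et-g} and $\lambda_0(K)\leq\e^{-d_2K}$ from \eqref{eq:lambda0}, gives for every $\vecm\neq\veczer$
\[
\big|\E_{\vecm}[f(\leproc(s-u))]-\nuk(f)\big|
\leq\Oun\|f\|_{\sK\!,q}K^q\Big(\e^{-\delta(\zeta\snorm{\vecm}/K\wedge\,\alpha)K}+(s-u)\e^{-\alpha\delta K}+\e^{-\frac{a(s-u-1)}{\log K}}+\e^{-\frac{d_2}{2}K}\Big).
\]
Inserting this with $\vecm=\leproc(u)$ and bounding $\E_{\vecn}[|\bar f(\leproc(u))|]\leq\Oun\|f\|_{\sK\!,q}K^q$ (valid since $u>1$), the three terms not involving $\leproc(u)$ integrate over (iii), after division by $T^2$, to $\Oun\|f\|_{\sK\!,q}^2K^{2q}\big(\frac{\log K}{aT}+T\e^{-\alpha\delta K}+\e^{-\frac{d_2}{2}K}\big)$, using $\int_0^\infty\e^{-at/\log K}\dd t=\Oun\log K$ and $\frac1{T^2}\int_0^T\!\dd s\int_0^s(s-u)\dd u=\Oun T$.

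The only genuinely delicate point is the term carrying the \emph{random} exponent $\e^{-\delta(\zeta\snorm{\leproc(u)}/K\wedge\,\alpha)K}$. Cauchy--Schwarz bounds its contribution by $\Oun\|f\|_{\sK\!,q}^2K^{2q}\big(\E_{\vecn}[\e^{-2\delta(\zeta\snorm{\leproc(u)}/K\wedge\,\alpha)K}]\big)^{1/2}$, and for a small $\gamma\in(0,1)$ with $\gamma\snorm{\vecxf}<\alpha/2$,
\[
\E_{\vecn}\big[\e^{-2\delta(\zeta\snorm{\leproc(u)}/K\wedge\,\alpha)K}\big]
\leq\e^{-2\delta\zeta\gamma K}+\proba_{\vecn}\big(\snorm{\leproc(u)}\leq\gamma K\big)
\leq\e^{-2\delta\zeta\gamma K}+\proba_{\vecn}(\tau_\gamma\leq u),
\]
and Lemma~\ref{sortpas} bounds the last probability by $\Oun(\e^{-\delta(\zeta(\snorm{\vecn}/K\wedge\alpha)-\gamma\snorm{\vecxf})K}+u\,\e^{-\delta(\alpha-\gamma\snorm{\vecxf})K})$. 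After enlarging $\delta,\zeta,\alpha$ into primed constants $\delta',\zeta',\alpha'$ (the enlargement is needed precisely to compare $\zeta(\snorm{\vecn}/K\wedge\alpha)-\gamma\snorm{\vecxf}$ with $\zeta'(\snorm{\vecn}/K\wedge\alpha')$ uniformly in $\vecn$, including the regime where $\snorm{\vecn}$ is tiny) and choosing $\theta'>0$ accordingly, this expectation is $\leq\Oun(\e^{-\delta'(\zeta'\snorm{\vecn}/K\wedge\alpha')K}+u\,\e^{-\theta'K})$; integrating over (iii) and dividing by $T^2$ produces the remaining terms $K^q\e^{-\delta'(\zeta'\snorm{\vecn}/K\wedge\alpha')K}$ and $TK^q\e^{-\theta'K}$ of the statement. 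Finally, the part of region (i) with $u<1\leq s$ and $s-u>1$ is handled by the same Markov/mixing argument, the only change being that $\E_{\vecn}[|\bar f(\leproc(u))|]$ now carries an extra $\normdeux{\vecn}^{q}$ from Proposition~\ref{tempscourts} — this is precisely the origin of the global prefactor $(c_q\norm{\vecn}^{q}+K^q)$ in the statement. Collecting all contributions yields the proposition. I expect the main obstacle to be exactly the bookkeeping in this last step: controlling $\E_{\vecn}[|\bar f(\leproc(u))|\,\e^{-\delta(\zeta\snorm{\leproc(u)}/K\wedge\,\alpha)K}]$ uniformly in $u\in[1,T]$ and repackaging its dependence on the initial state $\vecn$ into the clean form $\e^{-\delta'(\zeta'\snorm{\vecn}/K\wedge\alpha')K}$ with fresh constants; everything else is a standard, if lengthy, combination of Cauchy--Schwarz, the Markov property, and the moment and mixing estimates already established.
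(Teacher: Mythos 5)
Your overall architecture is the same as the paper's: expand the square into a double time integral, isolate the near-diagonal strip $\{s-u\le 1\}$ and the short-time strip $\{u\le 1\}$ via Proposition \ref{tempscourts} and Corollary \ref{auxbornees}, and on the main region use the Markov property at time $u$, the smoothing function $g(\vecm)=\E_{\vecm}[f(\leproc(1))]$, Proposition \ref{pre-theorem-ergodic}, Lemma \ref{f-et-g} and Lemma \ref{sortpas}. (The paper works with the uncentered product and compares to $\nuk(f)^2$ rather than centering $f$, and it tracks the absorbed trajectories explicitly through the terms $J_{1,1}$, $J_2$; these are organizational differences only.) Most of your steps are correct and match the paper's.

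There is, however, a genuine gap in the one step you yourself flag as delicate: the term carrying the random exponent $\e^{-\delta(\zeta\snorm{\sleproc(u)}/K\wedge\,\alpha)K}$. You take a \emph{fixed} small threshold $\gamma$, bound the expectation by $\e^{-2\delta\zeta\gamma K}+\proba_{\vecn}(\tau_\gamma\le u)$, apply Lemma \ref{sortpas}, and then claim that ``enlarging the constants'' into $\delta',\zeta',\alpha'$ repackages the result as $\Oun\big(\e^{-\delta'(\zeta'\snorm{\vecn}/K\wedge\,\alpha')K}+u\,\e^{-\theta'K}\big)$. This cannot work uniformly in $\vecn$. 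The regime that breaks it is not the one you mention ($\snorm{\vecn}$ tiny, where both sides are $\Theta(1)$ and there is nothing to prove), but rather $\snorm{\vecn}=cK$ with $0<c<\gamma\snorm{\vecxf}/\zeta$: there the exponent $\zeta(\snorm{\vecn}/K\wedge\alpha)-\gamma\snorm{\vecxf}$ delivered by Lemma \ref{sortpas} is negative, so the only available bound on $\proba_{\vecn}(\tau_\gamma\le u)$ is the trivial $1$, whereas the target $\e^{-\delta'(\zeta'\snorm{\vecn}/K\wedge\,\alpha')K}=\e^{-\delta'\zeta'cK}$ is exponentially small in $K$ for \emph{every} choice of positive primed constants. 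No adjustment of $\delta',\zeta',\alpha'$ closes this. The correct device --- and the one the paper uses, both in its event $\mathcal{E}_K$ and in its choice $\gamma'=\gamma'(\vecn)=\frac{1}{2}\big(\zeta\snorm{\vecn}/K\wedge\alpha\big)$ in the estimate of $W_2$ --- is to make the threshold depend on the initial state, taking $\gamma$ proportional to $\zeta\snorm{\vecn}/K\wedge\alpha$ (suitably normalized by $\snorm{\vecxf}$). With that choice the leakage term $\gamma\snorm{\vecxf}$ in Lemma \ref{sortpas} is always a fixed fraction of the main exponent, and both the ``on the event'' and ``off the event'' contributions come out of the form $\e^{-\delta'(\zeta'\snorm{\vecn}/K\wedge\,\alpha')K}$ uniformly in $\vecn$. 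Once this substitution is made, the rest of your argument goes through.
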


One can use Chebyshev inequality to bound $\proba_{\!\vecn}\big(\big|S_{f}(T,K)-\nuk(f)\big|>\delta\big)$,
for any $\delta>0$.

The proof of Proposition \ref{varcorn} is postponed to Appendix \ref{proof-variances}.
The previous estimate, as well as all the estimates we will give below, have the same behaviour in their dependence in $K$, $\vecn$ and
$T$. They display the qualitative behaviour that we met several times:
\begin{enumerate}
\item The bounds are not useful for $K$ too small.
\item If $K$ is large, the bounds are  not useful if $\vecn$ is small (order one) because the process can be absorbed at
$\veczer$ in a time of order one with a sizeable probability.
\item Finally, for $K$ large and $\vecn$ of order $K$,  the time $T$ must be large enough (polynomial in $K$ in
our bounds) but not too large (less than an exponential in $K$ because the process can reach the origin with high probability in 
such large times).  
\end{enumerate}
Integrating the previous estimate with respect to the qsd, we get the following control.

\begin{corollary}\label{varcornu}
There exist two positive constants $C''>0$ and $\theta''$ such that  
for all $K\geq K_0$, for all $f\in\mathscr{F}\!\!_{\sK\!,q}$ and for all $T\geq 0$, we have
\begin{align*}
\MoveEqLeft \E_{\snuk}\!\Big[\big(S_{f}(T,K)-\E_{_{\snuk}}\big(f\big)\big)^{\scaleto{2}{4pt}}\Big]\leq C''  \|f\|_{\sK\!,q}^{\scaleto{2}{4pt}}\, K^{\scaleto{2}{4pt}q}\\
& \times \left((1+C_{2q})(1+c_q)\frac{\log K}{T\vee 1}+ (1+C_q)(1+T) \e^{-\theta'' \! K} \right)
\end{align*}
where $K_0$ is as in the previous proposition, $c_q$ is defined in Proposition \ref{tempscourts}, and $C_q$ is defined in
Corollary \ref{bornepuissance}.
\end{corollary}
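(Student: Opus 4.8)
The plan is to integrate the pointwise bound of Proposition~\ref{varcorn} against the qsd. Since $\E_{\snuk}(f)=\nuk(f)$ is a constant, conditioning on the initial state gives
\[
\E_{\snuk}\!\Big[\big(S_{f}(T,K)-\E_{\snuk}(f)\big)^{2}\Big]
=\int_{\domaine}\E_{\vecn}\!\Big[\big(S_{f}(T,K)-\nuk(f)\big)^{2}\Big]\dd\nuk(\vecn),
\]
so it remains to integrate, term by term, the right-hand side of Proposition~\ref{varcorn}. Expanding the product $(c_{q}\norm{\vecn}^{q}+K^{q})(\cdots)$ there, one meets three kinds of $\nuk$-integrals: (i) purely polynomial ones, $\int\norm{\vecn}^{2q}$, $\int\norm{\vecn}^{q}$, $\int 1$, coming from the $\frac{1}{T\vee 1}$ factor; (ii) the same, weighted by the $T\e^{-\theta' K}$ factor; and (iii) one integral in which $\vecn$ appears inside an exponential.

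For (i) and (ii) I would invoke Corollary~\ref{bornepuissance}: $\int\norm{\vecn}^{q}\dd\nuk\le C_{q}K^{q}$ and $\int\norm{\vecn}^{2q}\dd\nuk\le C_{2q}K^{2q}$, taking the sequences $(c_{q}),(C_{q})$ non-decreasing in $q$ so that $C_{q}\le C_{2q}$. The $\frac{1}{T\vee 1}$ block then contributes $\Oun K^{2q}\big(c_{q}C_{2q}+c_{q}C_{2q}\log K+C_{2q}+\log K\big)/(T\vee 1)$, which, using $\log K\ge\log K_{0}\ge\log 2>0$ to absorb the non-logarithmic terms, is $\Oun K^{2q}(1+c_{q})(1+C_{2q})\,\frac{\log K}{T\vee 1}$ --- the first term of the claim. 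The $T\e^{-\theta' K}$ block contributes $\Oun (1+c_{q})(1+C_{q})K^{2q}(1+T)\e^{-\theta'K}$ at once.

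The only non-routine point is (iii), the term $\int_{\domaine}(c_{q}\norm{\vecn}^{q}+K^{q})K^{q}\e^{-\delta'(\zeta'\|\vecn\|_{1}/K\wedge\alpha')K}\dd\nuk$, where the dependence of the integrand on $\vecn$ inside the exponential must be controlled via the concentration of $\nuk$. I would split $\int_{\domaine}=\int_{\mathcal{D}}+\int_{\mathcal{D}^{c}}$ with $\mathcal{D}$ as in \eqref{boule}. On $\mathcal{D}$, since $\vecnf=\lfloor K\vecxf\rfloor$ with $\vecxf$ in the interior of $\rpd$, each coordinate of $\vecn\in\mathcal{D}$ is at least $\tfrac12\min_{j}n_{j}^{*}=\Theta(K)$, so $\|\vecn\|_{1}\ge c_{0}K$ for some $c_{0}>0$ and $K$ large; hence $\zeta'\|\vecn\|_{1}/K\wedge\alpha'\ge\kappa:=(\zeta'c_{0})\wedge\alpha'>0$, the exponential is $\le\e^{-\delta'\kappa K}$, and (using $\norm{\vecn}\le\Oun K$ there) $\int_{\mathcal{D}}\le\Oun(1+c_{q})K^{2q}\e^{-\delta'\kappa K}$. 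On $\mathcal{D}^{c}$, bound the exponential by $1$ and use Corollary~\ref{bornepuissance} and Proposition~\ref{borneexterieur}: $\int_{\mathcal{D}^{c}}\norm{\vecn}^{q}\dd\nuk\le C_{q}K^{q}\e^{-\pcte{borneexterieur}K}$ and $\nuk(\mathcal{D}^{c})\le\e^{-\pcte{borneexterieur}K}$, whence $\int_{\mathcal{D}^{c}}\le\Oun(1+c_{q})(1+C_{q})K^{2q}\e^{-\pcte{borneexterieur}K}$. Collecting the three blocks, setting $\theta''=\theta'\wedge\delta'\kappa\wedge\pcte{borneexterieur}$ and absorbing universal factors (and the harmless $1+c_{q}$ multiplying the exponentially small terms) into $C''$ gives the asserted estimate. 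The main --- and quite mild --- obstacle is precisely step (iii): one must know that the qsd charges states whose size is not $\Theta(K)$ with $\nuk$-probability only $\e^{-\Theta(K)}$, which is exactly Proposition~\ref{borneexterieur}; everything else is bookkeeping with the moment bounds of Corollary~\ref{bornepuissance}.
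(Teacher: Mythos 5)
Your proposal is correct and follows essentially the same route as the paper: integrate the pointwise variance bound of Proposition \ref{varcorn} against $\nuk$, handle the polynomial terms with Corollary \ref{bornepuissance}, and control the term with $\e^{-\delta'(\zeta'\|\vecn\|_1/K\wedge\alpha')K}$ by exploiting that on $\mathcal{D}$ one has $\norm{\vecn}=\Theta(K)$ while $\nuk(\mathcal{D}^c)$ is exponentially small (Proposition \ref{borneexterieur}). The paper uses a three-way split (further separating the region $\norm{\vecn}\ge\beta'K/\zeta'$ where the minimum in the exponent saturates) where you use a two-way split, but this is purely cosmetic.
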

Observe that the previous inequality is only useful in the range $0\leq T \le \e^{\theta'' \! K}$. 
The proofs of the two previous estimates are postponed to Appendix \ref{proof-variances}.

We now apply the previous results to our estimators.

\begin{proposition}
For all $1\leq p\leq d$, we have
\begin{align*}
\MoveEqLeft[6]
\E_{\vecn}\Big[\big(S^{{\scriptscriptstyle\vmu}}_p(T,K)-\mu_p^{\sK}\big)^{\scaleto{2}{4pt}}\Big]\le  \Oun(c_1\norm{\vecn} +K)\\
& \times
\left(\frac{\norm{\vecn} +K \log K}{T\vee 1}+ K\e^{-\delta'\big(\zeta' \frac{\|\vecn\|_{\scaleto{1}{3pt}}}{K}\wedge\, \beta'\big)K} + T K\e^{-\theta K}\right)
\end{align*}
and
\[
\E_{\snuk}\Big[\big|S^{{\scriptscriptstyle\vmu}}_p(T,K)-\mu_p^{\sK}\big|^{\scaleto{2}{4pt}}\Big]\le
\Oun K^{\scaleto{2}{4pt}}\!\left(\frac{1+\log K}{T\vee 1}+(1+T) \e^{-\theta''\! K} \right).
\]
\end{proposition}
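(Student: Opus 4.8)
The plan is to recognize $S^{{\scriptscriptstyle\vmu}}_p(T,K)$ as the time average $S_f(T,K)$ of \eqref{statf} for a single, very simple observable, and then read off the two claimed bounds from the general variance estimates already established, namely Proposition \ref{varcorn} and Corollary \ref{varcornu}.

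Concretely, I would take $f(\vecn)=n_p=\langle\vecn,\vece{p}\rangle$. The first step is to check that $f\in\mathscr{F}\!\!_{\sK\!,1}$ with $\|f\|_{\sK\!,1}\le 1$, which is immediate since $n_p\le\normdeux{\vecn}\le K+\normdeux{\vecn}$ for every $\vecn\in\domaine\setminus\{\veczer\}$, so that $|f(\vecn)|/(K+\normdeux{\vecn})\le 1$. Next, by \eqref{stat-mu} and \eqref{statf} one has $S^{{\scriptscriptstyle\vmu}}_p(T,K)=S_f(T,K)$, and by \eqref{def-mup} one has $\mu_p^{\sK}=\nuk(f)=\E_{\snuk}(f)$; hence $S^{{\scriptscriptstyle\vmu}}_p(T,K)-\mu_p^{\sK}=S_f(T,K)-\nuk(f)$, and likewise with $\E_{\snuk}$ in place of $\E_{\vecn}$.

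The third step, for the first inequality, is to apply Proposition \ref{varcorn} with $q=1$: since $\|f\|_{\sK\!,1}^{\,2}\le 1$ and, for $q=1$, $c_q=c_1$, $C_q=C_1$, the right-hand side of that proposition collapses to $\Oun\,(c_1\norm{\vecn}+K)$ times the displayed bracket, the constants $\alpha'$ and $\theta'$ of Proposition \ref{varcorn} being relabelled $\beta'$ and $\theta$. For the second inequality I would apply Corollary \ref{varcornu} with $q=1$, again using $\|f\|_{\sK\!,1}^{\,2}\le 1$: the factors $(1+C_2)(1+c_1)$ and $(1+C_1)$ are absolute constants absorbed into $\Oun$, and since $\log K\le 1+\log K$ the term $\log K/(T\vee 1)$ may be replaced by $(1+\log K)/(T\vee 1)$. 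This gives exactly the stated estimate.

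I do not expect any genuine obstacle here: all the analytical work sits in Proposition \ref{varcorn} and Corollary \ref{varcornu}, which were themselves reduced (in Appendix \ref{proof-variances}) to Theorem \ref{pseudoergod}, the decorrelation estimate \eqref{decor}, and the moment bounds of Section \ref{moment-process}. The only mild points of attention are the verification that the linear coordinate $n_p$ indeed lies in $\mathscr{F}\!\!_{\sK\!,1}$ with norm $\le 1$, and the bookkeeping of which positive constants are being renamed when matching the general statements to the present display.
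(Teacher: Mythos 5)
Your proposal is correct and follows exactly the paper's own proof, which consists of applying Proposition \ref{varcorn} and Corollary \ref{varcornu} to the coordinate functions $f(\vecn)=n_p$, observing that they belong to $\mathscr{F}\!\!_{\sK\!,1}$. The additional detail you supply (the norm bound $\|f\|_{\sK\!,1}\le 1$ and the renaming of constants) is just the routine bookkeeping the paper leaves implicit.
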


\begin{proof}
The proof follows by applying Proposition \ref{varcorn} and Corollary \ref{varcornu} to the functions $f(\vecn)=n_j$, $1\leq j\leq d$, which belong to $\mathscr{F}\!\!_{\sK\!,\scaleto{1}{4pt}}$.
\end{proof}

\begin{proposition}
\label{vs2}
For $1\leq p,p'\leq d$ and for all $\vecn\neq \veczer$, we have 
\begin{align*}
\MoveEqLeft[6]
\E_{\vecn}\Big[\big(S^{{\scriptscriptstyle \Sigma}}_{p,p'}(T,K)-\Sigma_{p,p'}^{\sK}\big)^{\scaleto{2}{4pt}}\Big]\le  
\Oun \big(c_{\scaleto{2}{4pt}}\normdeux{\vecn}^{\scaleto{2}{3.5pt}} +K^{\scaleto{2}{4pt}}\big)^{\scaleto{2}{4pt}}\\
& \times
\left(
\frac{1+\log K}{T\vee 1}+ \e^{-\delta'\big(\zeta' \frac{\|\vecn\|_{\scaleto{1}{3pt}}}{K}\wedge\, \beta'\big)K} + T  \e^{-\theta K} 
\right)
\end{align*}
and
\[
\E_{\snuk}\!\left[\big(S^{{\scriptscriptstyle \Sigma}}_{p,p'}(T,K)-\Sigma_{p,p'}^{\sK}\big)^{\scaleto{2}{4pt}}\right]
\le \Oun K^{\scaleto{4}{4pt}} \left(\frac{1+\log K}{T\vee 1}+(1+T) \e^{-\theta'' \!K} \right).
\]
\end{proposition}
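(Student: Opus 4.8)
The plan is to express $S^{{\scriptscriptstyle\Sigma}}_{p,p'}(T,K)$ as a genuine time average $S_g(T,K)$ of a function $g\in\mathscr{F}_{\sK,2}$ plus a ``product of means'' correction, then bound the first piece by Proposition \ref{varcorn} and the correction by a fourth-moment estimate for the averages $S^{{\scriptscriptstyle\vmu}}_p(T,K)$.

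First I would set $g_{p,p'}(\vecn)=(n_p-\mu_p^{\sK})(n_{p'}-\mu_{p'}^{\sK})$. Since $\vmu^{\sK}=\Theta(K)$ by Proposition \ref{moyenne}, one has $|g_{p,p'}(\vecn)|\le\normdeux{\vecn-\vmu^{\sK}}^{2}\le\Oun(K^{2}+\normdeux{\vecn}^{2})$, so $g_{p,p'}\in\mathscr{F}_{\sK,2}$ with $\|g_{p,p'}\|_{\sK,2}=\Oun$, and $\nuk(g_{p,p'})=\Sigma^{\sK}_{p,p'}$ by definition. Writing $b_p:=S^{{\scriptscriptstyle\vmu}}_p(T,K)-\mu_p^{\sK}=\frac{1}{T}\int_0^T(N_p^{\sK}(s)-\mu_p^{\sK})\,\dd s$, the elementary identity ``sample covariance $=$ average of products $-$ product of averages'' gives
\[
S^{{\scriptscriptstyle\Sigma}}_{p,p'}(T,K)-\Sigma^{\sK}_{p,p'}=\big(S_{g_{p,p'}}(T,K)-\nuk(g_{p,p'})\big)-b_p\,b_{p'},
\]
hence $\E_{\vecn}[(S^{{\scriptscriptstyle\Sigma}}_{p,p'}(T,K)-\Sigma^{\sK}_{p,p'})^{2}]\le 2\,\E_{\vecn}[(S_{g_{p,p'}}(T,K)-\nuk(g_{p,p'}))^{2}]+2\,\E_{\vecn}[b_p^{2}b_{p'}^{2}]$. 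For the first summand I would apply Proposition \ref{varcorn} with $q=2$ to $g_{p,p'}$ and then tidy the bound using only $\normdeux{\vecn}^{2}\le\Oun(c_{2}\normdeux{\vecn}^{2}+K^{2})$, $K^{2}\le c_{2}\normdeux{\vecn}^{2}+K^{2}$ and $K^{2}\log K\le(c_{2}\normdeux{\vecn}^{2}+K^{2})(1+\log K)$: these turn the prefactor $\|g_{p,p'}\|_{\sK,2}^{2}\,(c_{2}\normdeux{\vecn}^{2}+K^{2})$ times the bracket of Proposition \ref{varcorn} into $\Oun(c_{2}\normdeux{\vecn}^{2}+K^{2})^{2}$ times the bracket displayed in the statement, the constants being inherited from Proposition \ref{varcorn} and Lemma \ref{sortpas}.

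For the correction term, Cauchy-Schwarz gives $\E_{\vecn}[b_p^{2}b_{p'}^{2}]\le\E_{\vecn}[b_p^{4}]^{1/2}\E_{\vecn}[b_{p'}^{4}]^{1/2}$, so I need a \emph{fourth-moment} version of Proposition \ref{varcorn} for $f_p(\vecn)=n_p\in\mathscr{F}_{\sK,1}$, namely a bound $\E_{\vecn}[b_p^{4}]\le\Oun(c_{1}\snorm{\vecn}+K)^{2}\mathcal E^{2}$ with $\mathcal E$ the $q=1$ bracket of Proposition \ref{varcorn}, so that $\mathcal E^{2}$ carries a genuine $1/T^{2}$ decay. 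This I would obtain by rerunning the proof of Proposition \ref{varcorn} (postponed to Appendix \ref{proof-variances}) at fourth order: partition $[0,T]$ into blocks of length $\Oun\log K$, order the four time arguments $s_1\le s_2\le s_3\le s_4$, condition at the largest gap, and combine the decorrelation bound \eqref{decor} (equivalently Proposition \ref{pre-theorem-ergodic}) with the uniform moment bounds of Corollary \ref{auxbornees} and Proposition \ref{tempscourts} and the absorption estimate \eqref{probasort}; the non-centered absorption contributions are of the usual $(1+T)\e^{-\theta K}$ type. Feeding this into Cauchy-Schwarz and using $\mathcal E\le\Oun(c_{2}\normdeux{\vecn}^{2}+K^{2})^{1/2}\mathcal E'$, where $\mathcal E'$ is the bracket of the present statement, yields $\E_{\vecn}[b_p^{2}b_{p'}^{2}]\le\Oun(c_{2}\normdeux{\vecn}^{2}+K^{2})^{2}\mathcal E'$; the only thing to verify is that a leftover factor $\mathcal E'$ (or $\mathcal E'^{1/2}$) is absorbed into one more $(c_{2}\normdeux{\vecn}^{2}+K^{2})^{1/2}$, which I would do by distinguishing $T<1+\log K$ — where already the crude a priori bound $\E_{\vecn}[b_p^{4}]\le\Oun(K^{4}+\normdeux{\vecn}^{4})$ coming from Proposition \ref{tempscourts} and Corollary \ref{auxbornees} suffices — from the range $1+\log K\le T\ll\exp(\Thetaun K)$, where $\mathcal E'=\Oun$.

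The estimate started from the qsd then follows by integrating the $\vecn$-bound against $\nuk$, exactly as Corollary \ref{varcornu} is deduced from Proposition \ref{varcorn}: on $\mathcal D$ one has $\snorm{\vecn}=\Oun K$, while by Proposition \ref{borneexterieur} and the moment bounds of Corollary \ref{bornepuissance} the contribution of $\mathcal D^{c}$ is exponentially small, which replaces $(c_{2}\normdeux{\vecn}^{2}+K^{2})^{2}$ by $\Oun K^{4}$ and the $\vecn$-dependent exponential by $(1+T)\e^{-\theta''K}$. I expect the main obstacle to be precisely the fourth-moment control of $b_p$: Proposition \ref{varcorn} only gives its variance, the correction $b_pb_{p'}$ is genuinely of the order of the square of $b_p$, and no Jensen-type shortcut works — bounding $b_p^{2}$ by $\frac{1}{T}\int_0^T(N_p^{\sK}(s)-\mu_p^{\sK})^{2}\,\dd s$ destroys the cancellation and leaves a term of order $K$ that does not decay in $T$ — so one has to redo the appendix's decorrelation bookkeeping at order four.
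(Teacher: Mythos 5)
Your decomposition $S^{{\scriptscriptstyle\Sigma}}_{p,p'}(T,K)-\Sigma^{\sK}_{p,p'}=\big(S_{g_{p,p'}}(T,K)-\nuk(g_{p,p'})\big)-b_pb_{p'}$ is correct, and your treatment of the first term is exactly the paper's proof: the paper's entire argument consists of invoking Proposition \ref{varcorn} and Corollary \ref{varcornu} for the degree-two observable $n_pn_{p'}$ (your centered $g_{p,p'}\in\mathscr{F}\!\!_{\sK\!,2}$ is an equivalent choice), and your bookkeeping that turns the $q=2$ bracket of Proposition \ref{varcorn} into the prefactor $\Oun(c_2\normdeux{\vecn}^{2}+K^{2})^{2}$ times the stated bracket is sound. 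Where you go beyond the paper is in isolating the recentering term $b_pb_{p'}$ produced by the empirical mean $S^{{\scriptscriptstyle\vmu}}(T,K)$ in the definition of $S^{{\scriptscriptstyle\Sigma}}$: the paper's two-line proof does not mention it, and you are right both that it is genuinely there and that no Jensen-type shortcut works, since $\frac1T\int_0^T(N_p^{\sK}(s)-\mu_p^{\sK})^{2}\dd s$ is of order $K$ uniformly in $T$.

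The one unproven ingredient in your write-up is therefore the fourth-moment bound $\E_{\vecn}[b_p^{4}]\le\Oun(c_1\norm{\vecn}+K)^{2}\mathcal{E}^{2}$. Your plan for it (order the four time arguments, decorrelate at the largest gap via Theorem \ref{decor-thm}, control absorption by Lemma \ref{sortpas}, use Proposition \ref{tempscourts} and Corollary \ref{auxbornees} for the short-time and a priori bounds) is the right one, but it is asserted rather than executed, and nothing in the paper supplies it — Appendix \ref{proof-variances} only carries out the second-order computation. So this is a real gap in your proof, though it is precisely the gap the paper itself leaves open; your version is the more honest of the two. A cheaper route that avoids fourth-order decorrelation bookkeeping is to truncate on the event that $\sup_{s\le T}\|\leproc(s)\|_{\scaleto{1}{4pt}}\le\Oun(K+\norm{\vecn})$: on that event $|b_{p'}|\le\Oun(K+\norm{\vecn})$, so $\E_{\vecn}[b_p^{2}b_{p'}^{2}]\le\Oun(K+\norm{\vecn})^{2}\,\E_{\vecn}[b_p^{2}]$ and Proposition \ref{varcorn} with $q=1$ already gives the stated prefactor and bracket, while the complementary event is handled by Cauchy--Schwarz and an exit estimate in the spirit of \eqref{res2} — though that upward exit bound for a general starting point is likewise not stated in the paper and would need to be extracted from the Lyapunov function of Theorem \ref{borneexp}. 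The deduction of the qsd-started bound by integrating against $\nuk$, as in Corollary \ref{varcornu}, is fine.
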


\begin{proof}
The proof follows by applying Proposition \ref{varcorn} and Corollary \ref{varcornu} to the functions
$f(\vecn)=n_p n_{p'}$, $1\leq p,p' \leq d$,  which belong to $\mathscr{F}\!\!_{\sK\!,\scaleto{2}{4pt}}$.
\end{proof}

\begin{proposition}
There exist positive constants $\widetilde{C}, \tilde{\theta}$, $\tilde{\delta}$, $\tilde{\zeta}$ and $\tilde{\beta}$ such that for all $K\geq 2$, $T>0$ and $1\leq \ell\leq d$,
\begin{align*}
\MoveEqLeft \E_{\vecn}\big[\big(S^{{\scriptscriptstyle \EuScript{D}}}_{\ell}(T,K)-K\,B_{\ell}(\vecxf)\big)^{\scaleto{2}{4pt}}\Big]\le\\
& \widetilde{C} \Big(
K + \frac{A_\ell (1+C_{q_\ell}) K}{T} + K^{1-q_\ell} \frac{A_\ell}{T} (K+\norm{\vecn})^{q_\ell} \mathfrak{R}_\ell \\
&  \qquad \qquad +  K^{{\scaleto{2}{4pt}}-{\scaleto{2}{4pt}}q_\ell} A_\ell^{\scaleto{2}{4pt}}
(K+\norm{\vecn})^{\scaleto{2}{4pt}q_\ell}(\mathfrak{R}_\ell^{\scaleto{2}{4pt}}+\mathfrak{R}_\ell)
\Big)
\end{align*}
where 
\[
\mathfrak{R}_\ell=
(1+c_{q_\ell}) \left(\frac{1+\log K}{T} + T \e^{-\tilde{\theta} K}
+ \e^{-\tilde{\delta}\left(\tilde{\zeta}\frac{\|\vecn\|_{\scaleto{1}{3pt}}}{K}\wedge\, \tilde{\beta}\right)K}\right)
\]
and $A_\ell>0$, $q_\ell\in\integers$, are such that, for all $\vecx \in \rpd$, 
\[
|B_\ell(\vecx)|\leq A_\ell (1+ \norm{\vecx}^{\scaleto{q_\ell}{3.6pt}}).
\]
The existence of $A_\ell$ and $q_\ell$ follows from the assumptions on $B$. The constants $C_{q_\ell}$ and $c_{q_\ell}$ are 
defined in Corollary \ref{bornepuissance} and Lemma \ref{tempscourts}, respectively.\newline
We also have
\[
\E_{\snuk}\left[\big(S^{{\scriptscriptstyle \EuScript{D}}}_{\ell}(T,K)-K\,B_{\ell}(\vecxf)\big)^{\scaleto{2}{4pt}}\right]\le 
\]
\[
\widetilde{C} \left(K + \frac{A_\ell (1+C_{q_\ell}) K}{T} + \frac{A_\ell}{T} K \widetilde{\mathfrak{R}}_\ell +
K^{\scaleto{2}{4pt}} A_\ell^{\scaleto{2}{4pt}} ( \widetilde{\mathfrak{R}}_\ell^{\scaleto{2}{4pt}}+\widetilde{\mathfrak{R}}_\ell)\right)
\]
where
\[
\widetilde{\mathfrak{R}}_\ell=(1+C_{q_\ell}) \left((1+C_{\scaleto{2}{4pt}q_\ell})(1+c_{q_\ell})\frac{\log K}{T} +(1+T) \e^{-\tilde{\theta} K} \right)\,.
\]
\end{proposition}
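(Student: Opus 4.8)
The plan is to reduce the estimate to the time-average bounds of Section~\ref{sec:process} (Proposition~\ref{varcorn} and Corollary~\ref{varcornu}) together with the moment estimates for the qsd of Section~\ref{moment-process}, via the martingale decomposition of the birth counting process. Let $\mathcal{M}_\ell(t)$ denote the compensated counting process of the births of the $\ell$-th species up to time $t$: since births of species $\ell$ occur at rate $KB_\ell(\leproc(s)/K)$, it is a square-integrable martingale (integrability coming from Corollary~\ref{auxbornees} and Proposition~\ref{tempscourts}) with predictable quadratic variation $\langle\mathcal{M}_\ell\rangle(t)=\int_0^t KB_\ell(\leproc(s)/K)\dd s$. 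Setting $f_\ell(\vecn)=KB_\ell(\vecn/K)$, which by the polynomial bound $|B_\ell(\vecx)|\le A_\ell(1+\norm{\vecx}^{q_\ell})$ lies in $\mathscr{F}_{\sK,q_\ell}$ with $\|f_\ell\|_{\sK,q_\ell}=\Oun\, A_\ell K^{1-q_\ell}$, one has
\[
S^{{\scriptscriptstyle\EuScript{D}}}_{\ell}(T,K)=S_{f_\ell}(T,K)+\tfrac1T\mathcal{M}_\ell(T),
\qquad
\E_{\vecn}\!\Big[\big(\tfrac1T\mathcal{M}_\ell(T)\big)^{2}\Big]=\frac1{T^2}\,\E_{\vecn}\big[\langle\mathcal{M}_\ell\rangle(T)\big]=\frac1T\,\E_{\vecn}\big[S_{f_\ell}(T,K)\big].
\]
I would then write
\[
S^{{\scriptscriptstyle\EuScript{D}}}_{\ell}(T,K)-KB_\ell(\vecxf)=\big(S_{f_\ell}(T,K)-\nuk(f_\ell)\big)+\big(\nuk(f_\ell)-KB_\ell(\vecxf)\big)+\tfrac1T\mathcal{M}_\ell(T)
\]
and bound the square of the left-hand side by three times the sum of the squares of the three summands.

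For the stochastic term $S_{f_\ell}(T,K)-\nuk(f_\ell)$ I would apply Proposition~\ref{varcorn} with $f=f_\ell$, $q=q_\ell$ for the $\E_{\vecn}$-statement and Corollary~\ref{varcornu} for the $\E_{\snuk}$-statement; substituting $\|f_\ell\|_{\sK,q_\ell}=\Oun A_\ell K^{1-q_\ell}$ and using elementary inequalities such as $c_q\norm{\vecn}^q+K^q\le(1+c_q)(K+\norm{\vecn})^q$ produces precisely the terms $K^{2-2q_\ell}A_\ell^2(K+\norm{\vecn})^{2q_\ell}(\mathfrak{R}_\ell^2+\mathfrak{R}_\ell)$ and $K^2A_\ell^2(\widetilde{\mathfrak{R}}_\ell^2+\widetilde{\mathfrak{R}}_\ell)$ (the squares $\mathfrak{R}_\ell^2,\widetilde{\mathfrak{R}}_\ell^2$ being harmless overestimates, relevant only where the bound is vacuous). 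For the martingale term I would use the displayed identity and split $\E_{\vecn}[S_{f_\ell}(T,K)]=\nuk(f_\ell)+\big(\E_{\vecn}[S_{f_\ell}(T,K)]-\nuk(f_\ell)\big)$: the crude bound $\nuk(f_\ell)\le A_\ell(1+C_{q_\ell})K$ from Corollary~\ref{bornepuissance} accounts for the term $\tfrac{A_\ell(1+C_{q_\ell})K}{T}$, and Theorem~\ref{pseudoergod} applied to $f_\ell$ controls the remainder, producing the term $K^{1-q_\ell}\tfrac{A_\ell}{T}(K+\norm{\vecn})^{q_\ell}\mathfrak{R}_\ell$ (the exponentially small factor $\big(1-\e^{-\lambda_0(K)}\big)^{1/2}$ from Theorem~\ref{pseudoergod} being absorbed into the $\e^{-\Thetaun K}$ contributions inside $\mathfrak{R}_\ell$). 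For the $\E_{\snuk}$-version, since $\veczer$ is absorbing and $B_\ell(\veczer)=0$ by Assumption~\ref{H1}, relation \eqref{pte-qsd} gives $\E_{\snuk}[KB_\ell(\leproc(s)/K)]=\e^{-\lambda_0(K)s}\nuk(f_\ell)\le\nuk(f_\ell)$, hence $\E_{\snuk}\!\big[(\tfrac1T\mathcal{M}_\ell(T))^2\big]\le\tfrac1T\nuk(f_\ell)\le\tfrac{A_\ell(1+C_{q_\ell})K}{T}$.

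It remains to bound the deterministic bias $\nuk(f_\ell)-KB_\ell(\vecxf)$. I would Taylor-expand $B_\ell$ to second order around $\vecxf$, split $\nuk=\int_{\mathcal{D}}+\int_{\mathcal{D}^c}$, bound the $\mathcal{D}^c$-contribution by Corollary~\ref{bornepuissance} (exponentially small in $K$), and bound the $\mathcal{D}$-contribution using $\vecnf/K-\vecxf=\Oun/K$, the first-moment estimate $\vmu^{\sK}-K\vecxf=\Oun$ of Proposition~\ref{moyenne} and the centered second-moment estimate $\int\|\vecn-K\vecxf\|_2^2\,\dd\nuk=\Oun K$ of Theorem~\ref{bornemom}; this is exactly the computation already carried out in the proof of Theorem~\ref{thm-Kubo}, and it yields $\nuk(f_\ell)-KB_\ell(\vecxf)=\Oun$, whose square is $\le\Oun\le\widetilde{C}K$ for $K\ge1$ and is absorbed into the leading $\widetilde{C}K$. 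Summing the three contributions — and keeping the split $T\le1$ versus $T>1$ that Proposition~\ref{varcorn} already incorporates through Proposition~\ref{tempscourts} — gives both displayed inequalities. The one genuinely delicate ingredient is the bias estimate just described, whose error control rests entirely on the qsd moment machinery of Section~\ref{moment-process}; everything else is bookkeeping of the several error scales ($1/T$, $\log K/T$, $\e^{-\Thetaun K}$, $T\e^{-\Thetaun K}$) and of the dependence on $\norm{\vecn}$, $A_\ell$ and $q_\ell$.
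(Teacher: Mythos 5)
Your proposal is correct and follows essentially the same route as the paper: the compensated-martingale identity for the birth counting process is exactly the $\mathds{L}^2$-isometry of Proposition \ref{prop-rentree}, and the remaining pieces (Proposition \ref{varcorn} / Corollary \ref{varcornu} for the variance of $S_{f_\ell}$, Theorem \ref{pseudoergod} for its bias, and the qsd moment estimates for $\nuk(f_\ell)-KB_\ell(\vecxf)$) are the same as in the paper's proof, merely reorganized as a three-term decomposition around $KB_\ell(\vecxf)$ instead of around $\E_{\mathfrak{m}}[S^{\scriptscriptstyle\EuScript{D}}_\ell]$. Your bias estimate is in fact slightly sharper ($\Oun$ via the second-order Taylor expansion and Proposition \ref{moyenne}, versus the paper's $\Oun\sqrt{K}$ from a first-order expansion), but both are absorbed into the leading $\widetilde{C}K$ term.
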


\begin{proof}
First observe that
\[
S^{{\scriptscriptstyle \EuScript{D}}}_{\ell}(T,K)=\frac{\mathcal{N}^{\sK}_\ell(0,T)}{T}
\]
where $\mathcal{N}^{\sK}_\ell(0,T)$ is defined in Appendix \ref{nbsauts}. 
By assumption, the function $f_\ell(\vecn)=K^{q_\ell}\, B_\ell\left( \frac{\vecn}{K}\right) \in \mathscr{F}\!\!_{\sK\!,q_\ell}$.
Let $\mathfrak{m}$ be any probability measure on $\domaine$ having all its moments finite. We apply Theorem \ref{pseudoergod} to the function $f_\ell$, and then using integration against $\mathfrak{m}$ we get
\[
\left| \E_{\mathfrak{m}}\big[S_{f_\ell}(T,K)\big] - \nuk(f_\ell)\right|
\leq \Oun \|f_\ell\|_{\sK\!,q_\ell}
\]
\[
\times \int \left((K+\normdeux{\vecn})^{q_\ell} 
\left(\e^{-\delta\left(\zeta\frac{\|\vecn\|_{\scaleto{1}{3pt}}}{K}\wedge\,\beta\right) K}
+T\e^{-\delta\,\beta\,K}+\frac{1+\log K}{T}\right)\right) \dd \mathfrak{m}(\vecn).
\]
We now apply the identity in Proposition \ref{prop-rentree} and divide by $K^{q_\ell -1}$. We obtain
\begin{align}
\label{boborne}
\MoveEqLeft \left| \E_{\mathfrak{m}}\big[S^{{\scriptscriptstyle\EuScript{D}}}_{\ell}(T,K)\big] - \nuk\left(K B_\ell\Big(\frac{\vecn}{K}\Big)\right) \right|
\leq \Oun\|f_\ell\|_{\sK\!,q_\ell}\, K^{1-q_\ell}\\
& \times \mathlarger{\int} \left((K+\normdeux{\vecn})^{q_\ell} \left(\e^{-\delta\left(\zeta\frac{\|\vecn\|_{\scaleto{1}{3pt}}}{K}\wedge\,\beta\right)K}+T\e^{-\delta \beta K}+\frac{1+\log K}{T}\right)\right) \dd\mathfrak{m}(\vecn).
\nonumber
\end{align}
We now estimate 
\[
\int B_\ell\Big(\frac{\vecn}{K}\Big)\dd \nuk(\vecn)= \int_{\mathcal{D}} B_\ell\Big(\frac{\vecn}{K}\Big) \dd \nuk(\vecn)+
\int_{\mathcal{D}^c} B_\ell\Big(\frac{\vecn}{K}\Big) \dd \nuk(\vecn).
\]
The second integral is bounded from above by $\Oun/K$ using the polynomial bound on $B_\ell$ and the first estimate in Corollary \ref{bornepuissance}.
For the first integral we use Taylor expansion around $\vecxf$ to first order, then Cauchy-Schwarz inequality, and finally Theorem \ref{bornemom} for $q=1$. Therefore we obtain
\begin{align}
\nonumber
& \left| 
\E_{\mathfrak{m}}\big[S^{{\scriptscriptstyle \EuScript{D}}}_{\ell}(T,K)\big] - K B_\ell(\vecxf)\right|
\leq
\Oun \sqrt{K} +\Oun\, \|f_\ell\|_{\sK\!,q_\ell} K^{1-q_\ell}\\
& \times \int (K+\normdeux{\vecn})^{q_\ell} \left(\e^{-\delta\left(\zeta\frac{\|\vecn\|_{\scaleto{1}{3pt}}}{K}\wedge\,\beta\right)K}
+T\e^{-\delta\beta K}+\frac{1+\log K}{T}\right) \dd\mathfrak{m}(\vecn).
\label{pasbeau}
\end{align}
Now we apply the estimate in Proposition \ref{prop-rentree} to obtain
\begin{align*}
& \E_{\mathfrak{m}} \Big[\big(S^{{\scriptscriptstyle\EuScript{D}}}_{\ell}(T,K) -\E_{\mathfrak{m}}\big[S^{\EuScript{D}}_{\ell}(T,K)\big]\big)^{\scaleto{2}{4pt}}\Big]\\
& =\frac{1}{T^2} \, \E_{\mathfrak{m}} \Big[\big(\mathcal{N}^{\sK}_\ell(0,T) -
\E_{\mathfrak{m}}\big[\mathcal{N}^{\sK}_\ell(0,T)\big]\big)^{\scaleto{2}{4pt}}\Big]\\
& \leq \E_{\mathfrak{m}} \left[\left( 
\frac{1}{T} \int_0^T K^{1-q_\ell}\,  f_\ell(\leproc(s)) \dd s -\E_{\mathfrak{m}}\!\left[\frac{\mathcal{N}^{\sK}_\ell(0,T)}{T}\right]
\right)^{\scaleto{2}{4pt}}\,\right]\\
& \qquad + \frac{2}{T} \, \E_{\mathfrak{m}}\! \left[
\frac{1}{T} \int_0^T K^{1-q_\ell} \, f_\ell(\leproc(s)) \dd s 
\right] \\
& \leq 2\,\E_{\mathfrak{m}}\! \left[\left( 
\frac{1}{T} \int_0^T K^{1-q_\ell}\,  f_\ell(\leproc(s)) \dd s - K^{1-q_\ell} \nuk(f_\ell)
\right)^{\scaleto{2}{4pt}}\,\right] \\
& \qquad + 2\,\E_{\mathfrak{m}}\! \bigg[\left(K^{1-q_\ell} \nuk(f_\ell)-\E_{\mathfrak{m}}\!\left[S^{\EuScript{D}}_{\ell}(T,K)\right]
\right)^{\scaleto{2}{4pt}}\bigg]\\
& \qquad + \frac{2}{T} \, \E_{\mathfrak{m}}\! \left[\frac{1}{T} \int_0^T K^{1-q_\ell} \, f_\ell\big(\leproc(s)\big) \dd s 
\right].
\end{align*}
For the first term we use either Corollary \ref{varcornu} or Proposition \ref{varcorn}.  For the second term we use
\eqref{boborne}. For the third and last term we apply Theorem \ref{pseudoergod}, integrate with respect to $\mathfrak{m}$ and use
\eqref{pasbeau}. To finish the proof, we replace $\mathfrak{m}$ by either $\delta_{\vecn}$ or $\nuk$.
\end{proof}

Recall that $B_{p}(\vecxf)=D_{p}(\vecxf)$, $1\leq p\leq d$.
\begin{proposition}
\label{vs4}
Under the assumptions of Proposition \ref{varcorn} and Corollary \ref{varcornu}, we have, for all $1\leq p,p'\leq d$, and $\tau\geq 0$, 
\begin{align*}
\MoveEqLeft \E_{\vecn}\Big[\big(S^{{\scriptscriptstyle C}}_{p,p'}(T,\tau,K)-\Sigma^{\sK}_{p,p'}(\tau)\big)^{\scaleto{2}{4pt}}\Big]
\leq \Oun \big(c_2\norm{\vecn}^{\scaleto{2}{3.6pt}} +K^{\scaleto{2}{4pt}}\big)^{\scaleto{2}{4pt}}\times\\
&
\quad \left(\frac{1+\tau+\log K}{T\vee 1}+ \e^{-\delta'\big(\zeta' \frac{\|\vecn\|_{\scaleto{1}{3pt}}}{K}\wedge\, \beta'\big)K}
+(T+\tau)  \e^{-\theta K} 
\right).
\end{align*}
and
\begin{align*}
\MoveEqLeft
\E_{\snuk}\Big[\big(S^{{\scriptscriptstyle C}}_{p,p'}(T,\tau,K)-\Sigma^{\sK}_{p,p'}(\tau)\big)^{\scaleto{2}{4pt}}\Big] \\
& \leq \Oun K^{\scaleto{4}{4pt}} \!\left(\frac{1+\tau+\log K}{T}+ (1+T+\tau)\e^{-\theta''\! K} \right)
\end{align*}
\end{proposition}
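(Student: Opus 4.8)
The plan is to reduce $S^{{\scriptscriptstyle C}}_{p,p'}(T,\tau,K)$ to objects already under control — time averages of functions in $\mathscr{F}\!\!_{\sK\!,2}$, the estimator $S^{{\scriptscriptstyle\vmu}}$, and a single genuinely new two-time average that the Markov property handles. First I would expand the product in \eqref{stat-sigma}:
\[
S^{{\scriptscriptstyle C}}_{p,p'}(T,\tau,K)=A_{p,p'}-S^{{\scriptscriptstyle\vmu}}_{p}(T,K)\,B^{0}_{p'}-S^{{\scriptscriptstyle\vmu}}_{p'}(T,K)\,B^{\tau}_{p}+S^{{\scriptscriptstyle\vmu}}_{p}(T,K)\,S^{{\scriptscriptstyle\vmu}}_{p'}(T,K),
\]
where $A_{p,p'}=\frac{1}{T-\tau}\int_{0}^{T-\tau}N_{p}^{\sK}(s+\tau)N_{p'}^{\sK}(s)\dd s$, $B^{0}_{p'}=\frac{1}{T-\tau}\int_{0}^{T-\tau}N_{p'}^{\sK}(s)\dd s$, and $B^{\tau}_{p}=\frac{1}{T-\tau}\int_{\tau}^{T}N_{p}^{\sK}(s)\dd s$. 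The quantities $S^{{\scriptscriptstyle\vmu}}_{p}(T,K),B^{0}_{p'},B^{\tau}_{p}$ are time averages of the coordinate functions $\vecn\mapsto n_{p}\in\mathscr{F}\!\!_{\sK\!,1}$ over windows of length $T$ or $T-\tau$, possibly shifted by $\tau$; by Proposition \ref{moyenne}, Proposition \ref{varcorn} and Corollary \ref{varcornu} each equals $\mu_{p}^{\sK}=\Oun K$ up to an error whose $L^{2}$-norm obeys the bound of the variance estimate for $S^{{\scriptscriptstyle\vmu}}$ proved above, with $\log K$ replaced by $1+\tau+\log K$ because of the shift. Combining these with the moment bounds of Corollaries \ref{auxbornees} and \ref{bornepuissance} and Proposition \ref{tempscourts} — and with the straightforward $L^{4}$ analogues of those deviation bounds, obtained by the same method — handles the three products in the display exactly as in the proof of Proposition \ref{vs2}, each product being $\Oun K$ times an $\Oun K$-quantity with controlled deviation, which is what produces the prefactor $(c_{2}\normdeux{\vecn}^{2}+K^{2})^{2}$.

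It remains to treat $A_{p,p'}$. Using the Markov property I would write
\[
N_{p}^{\sK}(s+\tau)\,N_{p'}^{\sK}(s)=\phi\big(\leproc(s)\big)+N_{p'}^{\sK}(s)\Big(N_{p}^{\sK}(s+\tau)-\E_{\sleproc(s)}\!\big[N_{p}^{\sK}(\tau)\big]\Big),
\]
with $\phi(\vecm):=m_{p'}\,\E_{\vecm}[N_{p}^{\sK}(\tau)]$. By Corollary \ref{auxbornees} (for $\tau\ge1$) and Proposition \ref{tempscourts} (for $0\le\tau<1$) one has $\E_{\vecm}[N_{p}^{\sK}(\tau)]\le\Oun(K+\normdeux{\vecm})$ uniformly in $\vecm$ and in $\tau\ge0$, hence $\phi\in\mathscr{F}\!\!_{\sK\!,2}$ with $\|\phi\|_{\sK\!,2}=\Oun$ uniformly in $\tau$, and $\nuk(\phi)=\E_{\snuk}[N_{p}^{\sK}(\tau)N_{p'}^{\sK}(0)]$ by the tower property. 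Consequently $\frac{1}{T-\tau}\int_{0}^{T-\tau}\phi(\leproc(s))\dd s=S_{\phi}(T-\tau,K)$ is a plain time average, and Proposition \ref{varcorn} (started from $\vecn$) and Corollary \ref{varcornu} (started from $\nuk$) apply to it verbatim, giving the $L^{2}$ estimate around $\nuk(\phi)$. For the remaining term, set $Z(s)=N_{p'}^{\sK}(s)\big(N_{p}^{\sK}(s+\tau)-\E_{\sleproc(s)}[N_{p}^{\sK}(\tau)]\big)$; conditioning, for $s\le s'$, on $\mathcal{F}_{s'}$ and using $\E[N_{p}^{\sK}(s'+\tau)\mid\mathcal{F}_{s'}]=\E_{\sleproc(s')}[N_{p}^{\sK}(\tau)]$ shows that $\E[Z(s)Z(s')]=0$ as soon as $s+\tau\le s'$ (then $Z(s)$ is $\mathcal{F}_{s'}$-measurable while $\E[Z(s')\mid\mathcal{F}_{s'}]=0$), and symmetrically when $s'+\tau\le s$. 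Hence in the second moment of $\frac{1}{T-\tau}\int_{0}^{T-\tau}Z(s)\dd s$ only the diagonal band $\{|s-s'|<\tau\}$, of area $\le 2\tau(T-\tau)$, survives; bounding $\E_{\vecn}[Z(s)Z(s')]$ there by the Cauchy--Schwarz inequality and fourth-moment estimates (Corollary \ref{auxbornees}, Proposition \ref{tempscourts}) gives a contribution of order $\frac{\tau}{T\vee1}(c_{2}\normdeux{\vecn}^{2}+K^{2})^{2}$, absorbed in the stated bound.

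Finally I would assemble the pieces and check that the deterministic mismatch collapses. Since $\nuk$ is a qsd, \eqref{pte-qsd} gives $\E_{\snuk}[N_{p}^{\sK}(\tau)]=\e^{-\lambda_{0}(K)\tau}\mu_{p}^{\sK}$, hence $\Sigma^{\sK}_{p,p'}(\tau)=\nuk(\phi)-\mu_{p}^{\sK}\mu_{p'}^{\sK}\e^{-\lambda_{0}(K)\tau}$, so that the target of the expanded estimator differs from $\Sigma^{\sK}_{p,p'}(\tau)$ by
\[
\big(\nuk(\phi)-\mu_{p}^{\sK}\mu_{p'}^{\sK}\big)-\Sigma^{\sK}_{p,p'}(\tau)=\mu_{p}^{\sK}\mu_{p'}^{\sK}\big(\e^{-\lambda_{0}(K)\tau}-1\big),
\]
which by Proposition \ref{moyenne} and \eqref{eq:lambda0} is of order $K^{2}\tau\,\e^{-\theta K}$ and is absorbed in the $(T+\tau)\e^{-\theta K}$ term. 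Taking the initial law to be $\delta_{\vecn}$ (and invoking Proposition \ref{varcorn}) or $\nuk$ (and invoking Corollary \ref{varcornu}, together with the $\nuk$-moment bounds of Corollary \ref{bornepuissance}) then yields the two inequalities after elementary majorations. I expect the two-time term $\int_{0}^{T-\tau}Z(s)\dd s$ to be the main obstacle: one has to make sure the off-band covariances genuinely vanish for \emph{both} orderings of $s$ and $s'$ through the correct conditioning, and that on the width-$\tau$ diagonal band the moment bounds — which force a split into $\tau<1$ and $\tau\ge1$ and appeal to Corollary \ref{auxbornees} and Proposition \ref{tempscourts} — are uniform in $\tau\ge0$, so that the error carries precisely the factor $\tau/(T\vee1)$ and the power $(c_{2}\normdeux{\vecn}^{2}+K^{2})^{2}$ of the statement. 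The accompanying reshuffling of the $S^{{\scriptscriptstyle\vmu}}$-product terms is, as the paper notes, routine but tedious.
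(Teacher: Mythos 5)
Your argument is correct in substance, and it is organized rather differently from what the paper intends: the paper leaves Proposition \ref{vs4} to the reader as ``simple modifications'' of the proofs of Proposition \ref{varcorn} and Corollary \ref{varcornu}, i.e.\ one is meant to redo the double time integral directly for the lagged two-time observable, invoking Theorem \ref{decor-thm} on the gap $|s-s'|$. You instead reduce the lagged average to the single-time case already proved: the conditional expectation $\phi(\vecm)=m_{p'}\,\E_{\vecm}[N_p^{\sK}(\tau)]$ lies in $\mathscr{F}\!\!_{\sK\!,2}$ with norm $\Oun$ uniformly in $\tau$ (Corollary \ref{auxbornees} for $\tau\ge1$, Proposition \ref{tempscourts} for $\tau<1$), satisfies $\nuk(\phi)=\E_{\snuk}[N_p^{\sK}(\tau)N_{p'}^{\sK}(0)]$ by the tower property, and Proposition \ref{varcorn} then applies to $S_\phi$ verbatim; the residual $Z(s)$ has exactly vanishing covariances off the band $|s-s'|<\tau$ by martingale orthogonality, which is precisely where the $\tau$ in the rate $\frac{1+\tau+\log K}{T\vee 1}$ comes from. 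This is a clean and arguably more transparent route, and your reconciliation $\Sigma^{\sK}_{p,p'}(\tau)=\nuk(\phi)-\mu_p^{\sK}\mu_{p'}^{\sK}\e^{-\lambda_0(K)\tau}$ via \eqref{pte-qsd} is correct (just bound $(\e^{-\lambda_0(K)\tau}-1)^2\le\lambda_0(K)\tau\wedge1$ rather than by $(\lambda_0(K)\tau)^2$ so the absorption into $(T+\tau)\e^{-\theta K}$ is immediate for all $\tau$).

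Two points deserve more care than you give them. First, the cross terms: controlling $\E[(S^{{\scriptscriptstyle\vmu}}_pB^0_{p'}-\mu_p^{\sK}\mu_{p'}^{\sK})^2]$ through Cauchy--Schwarz genuinely requires a fourth-moment deviation bound $\E_{\vecn}[(S^{{\scriptscriptstyle\vmu}}_p-\mu_p^{\sK})^4]$, which is nowhere in the paper; obtaining it ``by the same method'' means a four-time correlation integral with decorrelation applied at the largest gap --- routine for a process satisfying \eqref{decor}, but not a one-line modification, and you should either prove it or restructure to avoid it. (The paper's own proof of Proposition \ref{vs2} has the identical silent gap, since $S^{{\scriptscriptstyle\Sigma}}_{p,p'}$ is not the plain time average of $n_pn_{p'}$.) Second, your denominators are really $T-\tau$, not $T\vee1$: the band contribution is $\tau/(T-\tau)$ and Proposition \ref{varcorn} applied to $S_\phi(T-\tau,K)$ yields $1/((T-\tau)\vee1)$. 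The stated bound is recovered only under something like $\tau\le T/2$ --- automatic in the regime $\tau=\Theta(1)$, $T$ large, used in Theorem \ref{rhoemp} --- and this restriction should be made explicit.
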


\begin{proof}
The proof requires some simple modifications of the proofs of Propostions \ref{varcorn} and  \ref{varcornu}. This is left to the reader.
\end{proof}

\begin{remark}
If one modifies slightly the definition of the estimator by integrating from time $1$,
then, in the four previous propositions, one can replace the factor $(\norm{\vecn}+K)$ by $K$, and the factor
$(\norm{\vecn}^{\scaleto{2}{3.6pt}}+K^{\scaleto{2}{4pt}})$ by $K^{\scaleto{2}{4pt}}$.
\end{remark}

Recall that we defined in Section \ref{sec:intro} an empirical matrix $M^{*}_{\scriptscriptstyle{\mathrm{emp}}}(T,\tau,K)$ by
\[
\e^{\tau M^{*}_{\scriptscriptstyle{\mathrm{emp}}}(T,\tau,K)}=S^{{\scriptscriptstyle C}}(T,\tau,K)\,S^{{\scriptscriptstyle\Sigma}}(T,K)^{-1}
\]
and an empirical resilience by
\[
\rho^{\scaleto{*}{3pt}}_{\scriptscriptstyle{\mathrm{emp}}}(T,\tau,K)=- \sup\{ \operatorname{Re} (z): z\in\mathrm{Sp}\big(M^{*}_{\scriptscriptstyle{\mathrm{emp}}}(T,\tau,K)\big)\}.
\]
From the above results one can derive various statistical estimates for the difference between $\rho^{\scaleto{*}{3pt}}_{\scriptscriptstyle{\mathrm{emp}}}(T,\tau,K)$ 
and $\res$. We have the following result which was stated at the end of Section \ref{subsec:main-results}. As already mentioned, we 
use the symbol $\ll$ which is not rigorously defined to formulate a more transparent bound. 
The reader can easily step through the proof to get a more precise, but rather cumbersome bound.
Let us also note that the dependence on the initial state $\vecn$ is related to the part ``with a probability larger than 
$1-1/K$'' of the statement. Indeed, the estimate of this probability results from Chebychev inequality and variance estimates in which the process is 
started in $\vecn$.

\begin{theorem}\label{rhoemp}
For $\tau=\Theta(1)$, $\vecn=\Theta(K)$ (initial state) and $0<T\ll \exp(\Thetaun K)$,
and $K$ large enough, we have
\[
\big|\rho^{\scaleto{*}{3pt}}_{\scriptscriptstyle{\mathrm{emp}}}(T,\tau,K)-\res\big|\le
\Oun \left( \frac{K^{\scaleto{2}{4.5pt}}}{\sqrt{T}} +\frac{1}{\sqrt{K}}\right)
\]
with a probability higher than $1-1/K$.
In particular, if $T\geq K^{\scaleto{5}{4.5pt}} $, we have
\[
\big|\rho^{\scaleto{*}{3pt}}_{\scriptscriptstyle{\mathrm{emp}}}(T,\tau,K)-\res\big|\leq \Oun/\sqrt{K}.
\]
\end{theorem}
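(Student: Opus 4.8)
The plan is to chain together three ingredients: the deterministic algebraic identity coming from the Onsager relation (Theorem~\ref{thm-onsager-relation}), the statistical control of the empirical covariances by Chebyshev's inequality (Propositions~\ref{vs2} and~\ref{vs4}), and a finite-dimensional perturbation argument turning closeness of matrices into closeness of the dominant real part of the spectrum. First I would record the deterministic identity: by Proposition~\ref{borninf}, $\Sigma^{\sK}(0)\ge \pcte{borninf}K\,\mathrm{Id}$, so $\Sigma^{\sK}(0)$ is invertible with $\|\Sigma^{\sK}(0)^{-1}\|=\Oun/K$; by Corollary~\ref{variance}, $\|\Sigma^{\sK}(0)\|=\Oun K$, and since $\tau=\Theta(1)$ and $M^*$ does not depend on $K$ we have $\|\e^{\tau M^*}\|=\Oun$, hence $\|\Sigma^{\sK}(\tau)\|=\Oun K$ by \eqref{onsager}. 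Multiplying \eqref{onsager} on the right by $\Sigma^{\sK}(0)^{-1}$ then gives the identity \eqref{matrix-onsager}, $\e^{\tau M^*}=\Sigma^{\sK}(\tau)\,\Sigma^{\sK}(0)^{-1}+\Oun/\sqrt{K}$.

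Next I would control the empirical covariance matrices. For $\vecn=\Theta(K)$ and $0<T\ll\exp(\Thetaun K)$ the exponential-in-$K$ terms in Propositions~\ref{vs2} and~\ref{vs4} are negligible, so those estimates read $\E_{\vecn}\big[(S^{{\scriptscriptstyle C}}_{p,p'}(T,\tau,K)-\Sigma^{\sK}_{p,p'}(\tau))^{2}\big]=\Oun K^{4}(1+\log K)/T$ for $\tau=\Theta(1)$, and similarly for $S^{{\scriptscriptstyle\Sigma}}(T,K)=S^{{\scriptscriptstyle C}}(T,0,K)$. Applying Markov's inequality to each squared entry and a union bound over the finitely many entries, I would get that, with probability at least $1-1/K$,
\[
\big\|S^{{\scriptscriptstyle C}}(T,\tau,K)-\Sigma^{\sK}(\tau)\big\|+\big\|S^{{\scriptscriptstyle\Sigma}}(T,K)-\Sigma^{\sK}(0)\big\|\le \Oun\,\frac{K^{5/2}\sqrt{\log K}}{\sqrt{T}}\le \Oun\,\frac{K^{3}}{\sqrt{T}},
\]
the extra $\sqrt{K}$ over the standard deviation coming from the target probability $1/K$ in Chebyshev. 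On this event, the displayed error being $\ll K$ and $\Sigma^{\sK}(0)\ge\pcte{borninf}K\,\mathrm{Id}$, the matrix $S^{{\scriptscriptstyle\Sigma}}(T,K)$ is invertible with $\|S^{{\scriptscriptstyle\Sigma}}(T,K)^{-1}\|=\Oun/K$ and $\|S^{{\scriptscriptstyle C}}(T,\tau,K)\|=\Oun K$, so $M^{*}_{\mathrm{emp}}(T,\tau,K)$ is well defined.

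Then I would pass from matrices to the resilience. Writing $A=\Sigma^{\sK}(\tau)$, $B=\Sigma^{\sK}(0)$, $\hat A=S^{{\scriptscriptstyle C}}(T,\tau,K)$, $\hat B=S^{{\scriptscriptstyle\Sigma}}(T,K)$, I would use the decomposition
\[
\hat A\hat B^{-1}-AB^{-1}=(\hat A-A)\,\hat B^{-1}+A\,\hat B^{-1}(B-\hat B)\,B^{-1},
\]
and bound each term with the estimates above ($\|\hat A-A\|,\|B-\hat B\|\le\Oun K^{3}/\sqrt{T}$, $\|A\|,\|\hat A\|=\Oun K$, $\|B^{-1}\|,\|\hat B^{-1}\|=\Oun/K$); combined with the identity of the first step this yields $\|\e^{\tau M^{*}_{\mathrm{emp}}}-\e^{\tau M^*}\|=\Oun(1/\sqrt{K}+K^{2}/\sqrt{T})$. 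Since $\tau=\Theta(1)$ and $M^*$ is a fixed matrix, $\e^{\tau M^*}$ is a fixed invertible matrix and the principal matrix logarithm is defined and Lipschitz, with an $\Oun$ constant, on a fixed neighbourhood of it (the Fr\'echet derivative of $\exp$ at $\tau M^*$ is invertible); for $K$ large the error just obtained lies in that neighbourhood, so $M^{*}_{\mathrm{emp}}=\tau^{-1}\log(\hat A\hat B^{-1})$ and $\|M^{*}_{\mathrm{emp}}-M^*\|=\Oun(1/\sqrt{K}+K^{2}/\sqrt{T})$. Finally, $\res$ and $\rho^{*}_{\mathrm{emp}}$ are minus the largest real part of the spectra of $M^*$ and $M^{*}_{\mathrm{emp}}$; since $M^*$ is independent of $K$, standard spectral perturbation theory (for instance the Bauer--Fike theorem, using diagonalisability of $M^*$, in particular when its dominant eigenvalue is simple, as holds in the numerical example) shows that the functional ``largest real part of the spectrum'' is Lipschitz, with an $\Oun$ constant, near $M^*$, whence $|\rho^{*}_{\mathrm{emp}}-\res|\le\Oun\|M^{*}_{\mathrm{emp}}-M^*\|=\Oun(1/\sqrt{K}+K^{2}/\sqrt{T})$. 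Setting $T\ge K^{5}$ gives the stated corollary.

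The hardest part will be, besides bookkeeping the powers of $K$ through the matrix inversion (which forces $T$ to be large enough relative to $K$ for $\hat B$ to be provably well-conditioned — harmless, as otherwise $K^{2}/\sqrt{T}\gg\res=\Theta(1)$ and the statement is vacuous), the last spectral step: one must ensure the passage from closeness of matrices to closeness of the largest real part of the spectrum is Lipschitz and not merely H\"older, since a nontrivial Jordan block in the dominant eigenvalue of $M^*$ would cost a root and degrade $1/\sqrt{K}$ to $K^{-1/(2d)}$. This is exactly where it is crucial that $M^*$ is fixed, independent of $K$, and that its relevant eigenvalue is non-degenerate, so that all perturbation constants stay bounded as $K\to\infty$.
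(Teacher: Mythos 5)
Your proposal is correct and follows essentially the same route as the paper: Chebyshev applied to Propositions \ref{vs2} and \ref{vs4} to obtain $\Oun K^{3}/\sqrt{T}$ control of the empirical covariances with probability at least $1-1/K$, then the decomposition of $\hat A\hat B^{-1}-AB^{-1}$ combined with Theorem \ref{thm-onsager-relation} and Proposition \ref{borninf} to bound $\big\|\e^{\tau M^{*}_{\mathrm{emp}}}-\e^{\tau M^{*}}\big\|$ by $\Oun\big(1/\sqrt{K}+K^{2}/\sqrt{T}\big)$. The paper then concludes in one line (``the result follows since $\tau$ is of order one''), so your explicit treatment of the matrix logarithm and of the spectral perturbation step is the only place where you go beyond it; the caveat you correctly raise there --- that a Lipschitz, rather than merely H\"older, dependence of the dominant real part of the spectrum on the matrix requires non-degeneracy (e.g.\ simplicity, or diagonalisability) of the dominant eigenvalue of $M^{*}$, which the standing assumptions do not obviously guarantee --- applies equally to the paper's own terse conclusion and is not a defect specific to your argument.
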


\begin{proof}
It follows from Propositions \ref{vs2} and \ref{vs4} and the standing assumptions that, with a probability higher that $1-1/K$, we have
\[
\|S^{{\scriptscriptstyle C}}(T,\tau,K)-\Sigma^{\sK}(\tau)\|\leq \Oun\, \frac{K^{\scaleto{3}{4pt}}}{\sqrt{T}}
\]
and
\[
\|S^{{\scriptscriptstyle \Sigma}}(T,K)-\Sigma^{\sK}\|\leq \Oun \, \frac{K^{\scaleto{3}{4.5pt}}}{\sqrt{T}}\,.
\]
($\|\cdot\|$ stands for any matrix norm on $\mathds{R}^{d\times d}$ since they are all equivalent.)
We now use Theorem \ref{thm-onsager-relation} and Proposition \ref{borninf} to obtain
\[
\Big\| \e^{\tau M^{*}_{\scriptscriptstyle{\mathrm{emp}}}(T,\tau,K)}-\e^{\tau M^{*}}\Big\|\leq \Oun \left( \frac{1}{\sqrt{K}}+ \frac{K^{\scaleto{2}{4pt}}}{\sqrt{T}}\right)\,.
\]
The result follows since $\tau$ is of order one.
\end{proof}

\appendix

\section{Proof of the two variance estimates}\label{proof-variances}

\subsection{Starting from anywhere: proof of Proposition \ref{varcorn}}

It is enough to prove the result for $\|f\|_{\sK\!,q}=1$.
We have
\[
\E_{\vecn}\!\left[\left(\frac{1}{T}
\int_{0}^{T}f\big(\leproc(t)\big)\dd t\right)^{\!\scaleto{2}{4pt}}\,\right] 
=\frac{\scaleto{2}{6pt}}{T^{\scaleto{2}{4pt}}}\int_{0}^{T}\!\!\!\dd t_{2}\int_{0}^{t_{2}}\!\E_{\vecn}\Big[
f\big(\leproc(t_{1})\big) f\big(\leproc(t_{2})\big)\!\Big]\!\dd t_{1}.
\]
\textbf{Step 1} is to estimate the contribution of the range $0\le t_{1}\le t_{2}\le 1$. Using Cauchy-Schwarz inequality and Proposition \ref{tempscourts} we get
\[
\left|\int_{0}^{1}\dd t_{2}\int_{0}^{t_{2}}\E_{\vecn}\Big[
f\big(\leproc(t_{1})\big) f\big(\leproc(t_{2})\big)\Big]\dd t_{1}\right|
\le \Oun \big(\norm{\vecn}^{\scaleto{q}{4pt}}+K^{q}\big)^{\scaleto{2}{4pt}}.
\]
\textbf{Step 2} is to estimate the contribution in the range $0\le
t_{2}-1\leq t_{1}\leq t_{2}$. This implies that $T>1$.
We have using again Proposition \ref{tempscourts}
\begin{align*}
\MoveEqLeft 
\left|\int_{1}^{T} \dd t_{2}\int_{t_{2}-1}^{t_{2}}\E_{\vecn}
\Big[f\big(\leproc(t_{1})\big) f\big(\leproc(t_{2})\big)\Big]\dd t_{1}\right|\\
& \le \int_{1}^{T} \dd t_{2}\int_{t_{2}-1}^{t_{2}}\left(\E_{\vecn}\Big[
f\big(\leproc(t_{1})\big)^{\scaleto{2}{4pt}}\Big]+ \E_{\vecn}\Big[f\big(\leproc(t_{2})\big)^{\scaleto{2}{4pt}}\Big]\right)\dd t_{1}\\
& \le \Oun \,T\,\Big(\norm{\vecn}^{\scaleto{q}{4pt}}+K^{q}\Big)^{\scaleto{2}{4pt}}.
\end{align*}
\textbf{Step 3}\\
(1) Using the Markov property and the definition of $g$ (see \eqref{def-g}) we have
\begin{align*}
\MoveEqLeft \int_{1}^{T} \dd t_{2}\int_{0}^{t_{2}-1}\E_{\vecn}\Big[
f\big(\leproc(t_{1})\big) f\big(\leproc(t_{2})\big)\Big]\dd t_{1}\\
& =\int_{0}^{T-1}  \dd s\int_{0}^{s}\E_{\vecn}\Big[
f\big(\leproc(t_{1})\big) g\big(\leproc(s)\big)\Big]\dd t_{1}\\
& = \int_{0}^{T-1}  \dd s\int_{0}^{s}
\E_{\vecn}\Big[f\big(\leproc(t_{1})\big) \E_{\sleproc(t_{1}) } \big[ g\big(\leproc(s-t_1)\big)\big]\Big]\dd t_{1}\,.
\end{align*}
Let us first write
\[
\E_{\vecn}\Big[f\big(\leproc(t_{1})\big) \E_{\sleproc(t_{1}) } \big[ g\big(\leproc(s-t_1)\big)\big]\Big]
\]
as the sum of $J_{1}(\vecn)$ and $J_{2}(\vecn)$ where
\[
J_1(\vecn)=\E_{\vecn}\Big[f\big(\leproc(t_{1})\big)\E_{\sleproc(t_{1}) } \big[ \un_{\{T_{\veczer} >s-t_1\}}g\big(\leproc(s-t_1)\big)\big]\Big]
\]
and
\[
J_2(\vecn)=\E_{\vecn}\Big[
f\big(\leproc(t_{1})\big) \E_{\sleproc(t_{1}) } \big[ 
\un_{\{T_{\veczer} \leq s-t_1\}}g\big(\leproc(s-t_1)\big)\big]\Big].
\]
We further decompose $J_1(\vecn)$ as $J_{1,1}(\vecn)+J_{1,2}(\vecn)$ where
\[
J_{1,1}(\vecn)=\E_{\vecn}\Big[
f\big(\leproc(t_{1})\big) \,\un_{\{T_{\veczer}\leq t_1\}}
 \E_{\sleproc(t_{1}) } 
\big[ \un_{\{T_{\veczer} >s-t_1\}}g\big(\leproc(s-t_1)\big)\big]\Big]
\]
and
\[
J_{1,2}(\vecn)=\E_{\vecn}\Big[
f\big(\leproc(t_{1})\big)\,\un_{\{T_{\veczer}> t_1\}}
\E_{\sleproc(t_{1}) } \big[ \un_{\{T_{\veczer} >s-t_1\}}g\big(\leproc(s-t_1)\big)\big]\Big].
\]
Since $\veczer$ is an absorbing state, we have for all $\vecn\neq \veczer$ that
\[
J_{1,1}(\vecn)=0.
\]
(2) We start by estimating $J_2(\vecn)$.
Since $\veczer$ is an absorbing state, we have
\[
J_2(\vecn)= g(\veczer)\, \E_{\vecn}\Big[
f\big(\leproc(t_{1})\big)\proba_{\!\sleproc(t_{1}) } \big( 
T_{\veczer} \leq s-t_1\big)\Big].
\]
Note that $g(\veczer)= \E_{\veczer}[f(\leproc(1))]= f(\veczer)$. 
Since we are going to use Lemma \ref{sortpas},  we write $J_2(\vecn)=J_{2,1}(\vecn)+J_{2,2}(\vecn)$
where
\[
J_{2,1}(\vecn)=
f(\veczer)\,\E_{\vecn}\Big[
f\big(\leproc(t_{1})\big) \un_{\{\|\leproc(t_1)\|_{\scaleto{1}{4pt}}> K\alpha/\zeta\}} \proba_{\!\leproc(t_{1}) } \big( 
T_{\veczer} \leq s-t_1\big)\Big].
\]
and
\[
J_{2,2}(\vecn)= f(\veczer)\, 
\E_{\vecn}\Big[
f\big(\leproc(t_{1})\big) \un_{\{\|\leproc(t_1)\|_{\scaleto{1}{4pt}}\leq K\alpha/\zeta\}} \proba_{\!\leproc(t_{1}) } \big( 
T_{\veczer} \leq s-t_1\big)\Big].
\]
We first estimate $J_{2,1}(\vecn)$. 
Using \eqref{ginfini}, Lemma \ref{sortpas} with $\gamma=0$, 
and since $f$ belongs to $\mathscr{F}\!\!_{\sK\!,q}$ (see \eqref{observables}), we have 
\begin{align*}
|J_{2,1}(\vecn)|
& \leq \Oun  \E_{\vecn}\big[|f(\leproc(t_1))|\big]  \e^{- \alpha \delta K}(1+C {(s-t_1)})\\
& \leq \Oun   (\norm{\vecn}^{\scaleto{q}{4pt}} + K^q)  \e^{- \alpha \delta K}(1+C {(s-t_1)})
\end{align*}
where we used Proposition \ref{tempscourts} for the second inequality. 

We now estimate $J_{2,2}(\vecn)$ by splitting it as
$J_{2,2,1}(\vecn)+J_{2,2,2}(\vecn)$ where
\[
J_{2,2,1}(\vecn)=f(\veczer)\,\E_{\vecn}\Big[f\big(\leproc(t_{1})\big) \un_{\left\{\|\leproc(t_1)\|_{\scaleto{1}{4pt}}\leq K\alpha/\zeta\right\}} \, 
\un_{\mathcal{E}_K}\,\proba_{\!\sleproc(t_{1}) } \big(T_{\veczer} \leq s-t_1\big)\Big].
\]
and
\[
J_{2,2,2}(\vecn)= f(\veczer)\,
\E_{\vecn}\!\left[
f\big(\leproc(t_{1})\big) \un_{\left\{\|\leproc(t_1)\|_{\scaleto{1}{4pt}}\leq K\alpha/\zeta\right\}}
\un_{\mathcal{E}_K^c}\proba_{\!\sleproc(t_{1}) } \big(T_{\veczer} \leq s-t_1\big)\right]
\]
where 
\[
\mathcal{E}_K:=\left\{\|\leproc(t_1)\|_{\scaleto{1}{4pt}} > \left(\left[\frac{1}{2\|\vecxf\!\|_{\scaleto{1}{4pt}}}
\left(\zeta\frac{\norm{\vecn}}{K}\wedge \alpha\right)\right]\wedge 1\right){K}\right\}.
\]
Proceeding as before we get
\begin{align*}
|J_{2,2,1}(\vecn)|
& \leq \Oun K^{q}\E_{\vecn}\left[
\un_{\mathcal{E}_K}\,\proba_{\!\sleproc(t_{1}) } \big( \un_{\{T_{\veczer} \leq s-t_1\}}\big)\right]\\
& \leq \Oun  K^{q} 
\Big(\e^{-\delta K\left( \Big(\big[\frac{1}{2\|\svecxf\!\|_{\scaleto{1}{3pt}}} \big(\zeta\frac{\|\vecn\!\|_{\scaleto{1}{3pt}}}{K}\wedge\, \alpha\big)\big]\wedge 1\Big)\wedge\, \alpha\right)}+(s-t_1) \e^{-\alpha\delta K}\Big).
\end{align*}
We used Lemma \ref{sortpas} with $\gamma=0$.\newline
We now handle $J_{2,2,2}(\vecn)$. 

Note that $\gamma\leq 1\wedge \frac{\alpha}{\|\vecxf\|_{\scaleto{1}{3pt}}}$.
We proceed as before with $f$ and $g$, and we use Lemma \ref{sortpas} with 
\[
\gamma=\left(\frac{1}{2\|\vecxf\|_{\scaleto{1}{4pt}}} \left(\zeta\frac{\|\vecn\|_{\scaleto{1}{4pt}}}{K}\wedge \alpha\right)\right)\wedge 1.
\]
to get
\begin{align*}
|J_{2,2,2}(\vecn)|
& \leq \Oun\, K^{q}\,\proba_{\!\vecn}\!\left(
\|\leproc(t_1)\|_{\scaleto{1}{4pt}} \le \left(\bigg(\frac{1}{2\|\vecxf\!\|_{\scaleto{1}{4pt}}}
\Big(\zeta\frac{\norm{\vecn}}{K}\wedge \alpha\Big)\bigg)\wedge 1\right)\,{K}\right)\\
& \leq \Oun K^q
\left( \e^{-{\delta} \big(\frac{1}{2} \big(\zeta\frac{\|\vecn\|_{\scaleto{1}{3pt}}}{K}\wedge\, \alpha\big)\wedge \|\vecxf\!\|_{\scaleto{1}{4pt}}\big)K}
+\, C\, t_1 \e^{-\frac{\alpha\delta K}{2}}\right).
\end{align*}
\noindent (3) 
Let us now estimate $\left| J_{1,2}(\vecn) -\nuk(f)^{\scaleto{2}{4pt}}\right|$ for all $\vecn\neq \veczer$.
We have 
\begin{align*}
& \left|J_{1,2}(\vecn) -\nuk(f)^{\scaleto{2}{4pt}}\right| \leq  \left|J_{1,2}(\vecn)
-\nuk(g)\, \E_{\vecn}\,\big[ f(\leproc(t_1)) \, \un_{\{T_{\veczer} >t_1\}} \, \proba_{\!\sleproc(t_{1})}\big( T_{\veczer} >s-t_1\big)\big]\right|\\
& + \left|\nuk(g) \E_{\vecn}\Big[f\big(\leproc(t_{1})\big)\un_{\{T_{\veczer}>t_{1}\}}
\proba_{\!\sleproc(t_{1})}\big(T_{\veczer}>s-t_{1}\big)\Big]-\nuk(g) \,\E_{\vecn}\Big[f\big(\leproc(t_{1})\big)\un_{\{T_{\veczer}>t_{1}\}}\Big]\right|\\
& +\left| \nuk(g)\,\E_{\vecn}\Big[f\big(\leproc(t_{1})\big)
\un_{\{T_{\veczer}>t_{1}\}}\Big]-\nuk(g)\nuk(f)\right|+ 
\left|\nuk(g)\nuk(f) - \nuk(f)^{\scaleto{2}{4pt}}\right|\\
&= W_{1}(\vecn)+W_{2}(\vecn)+W_{3}(\vecn)+W_{4}.
\end{align*}

(3)-(i) By Theorem \ref{decor-thm} and since $\leproc(t_1)\neq \veczer$, we have
\[
\Big| 
\E_{\sleproc(t_{1})} \big[ \un_{\{T_{\veczer} >s-t_1\}}g\big(\leproc(s-t_1)\big)\big]
- \proba_{\!\leproc(t_{1})}\big( T_{\veczer} >s-t_1\big)\, \nu(g)
\Big|
\leq \Oun\, K^q  \e^{-\frac{a(s-t_{\scaleto{1}{3pt}})}{\log K}}.
\]
Hence, using Proposition \ref{tempscourts}, we get for all $\vecn\neq \veczer$
\[
 W_{1}(\vecn)\leq
\Oun \, K^q \big(c_q \norm{n}^{\scaleto{q}{4pt}} + K^q\big) \e^{- \frac{a(s-t_{\scaleto{1}{3pt}})}{\log K}}.
\]

(3)-(ii) We have 
\[
\left|\nuk(g) \E_{\vecn}\Big[
f\big(\leproc(t_{1})\big)\un_{\{T_{\veczer}>t_{1}\}}
\proba_{\!\sleproc(t_{1})}\big(T_{\veczer}>s-t_{1}\big)\Big]-
\nuk(g) \E_{\vecn}\Big[f\big(\leproc(t_{1})\big)\un_{\{T_{\veczer}>t_{1}\}}\Big]\right|
\]
\[
\le \big|\nuk(g)\big| \E_{\vecn}\Big(
\big|f\big(\leproc(t_{1})\big)\big|\;\un_{\{T_{\veczer}>t_{1}\}}
\proba_{\!\sleproc(t_{1})}\big(T_{\veczer}\le s-t_{1}\big)\Big)
\]
Define $0<\gamma'<\beta$ by
\[
\gamma'=\gamma'(\vecn)=\frac{1}{2} \left(\frac{\zeta \norm{\vecn}}{K} \wedge \alpha\right).
\]
We split the right hand side in two terms:
\begin{align*}
\MoveEqLeft 
\E_{\vecn}\Big[
\big|f\big(\leproc(t_{1})\big)\big|\un_{\{T_{\veczer}>t_{1}\}}
\proba_{\!\sleproc(t_{1})}\big(T_{\veczer}\le s-t_{1}\big)\Big]\\
& =\E_{\vecn}\Big[\un_{\{\|\sleproc(t_{1})\|_{\scaleto{1}{4pt}}\le \gamma' K\}}
\big|f\big(\leproc(t_{1})\big)\big|\un_{\{T_{\veczer}>t_{1}\}}
\proba_{\!\sleproc(t_{1})}\big(T_{\veczer}\le s-t_{1}\big)\Big] \\
& \quad +\E_{\vecn}\Big[\un_{\{\|\sleproc(t_{1})\|_{\scaleto{1}{4pt}}> \gamma' K\}}
\big|f\big(\leproc(t_{1})\big)\big|\un_{\{T_{\veczer}>t_{1}\}}
\proba_{\!\sleproc(t_{1})}\big(T_{\veczer}\le s-t_{1}\big)\Big].
\end{align*}
The  first term is estimated using the growth property of $f$, Lemma \ref{sortpas}, 
and Cauchy-Schwarz inequality, namely 
\begin{align*}
\MoveEqLeft \E_{\vecn}\Big[\un_{\{\|\sleproc(t_{1})\|_{\scaleto{1}{4pt}}\le \gamma' K\}}\big|f\big(\leproc(t_{1})\big)\big|\,
\un_{\{T_{\veczer}>t_{1}\}}\,\proba_{\!\sleproc(t_{1})}\big(T_{\veczer}\le s-t_{1}\big)\Big]\\
& \le \E_{\vecn}\Big[\un_{\{\|\sleproc(t_{1})\|_{\scaleto{1}{4pt}}\le \gamma' K\}}
\big|f\big(\leproc(t_{1})\big)\big|^{\scaleto{2}{4pt}}\Big]^{\frac{1}{2}}\proba_{\!\vecn}\big(\|\leproc(t_{1})\|_{\scaleto{1}{4pt}}\le \gamma'K\big)^{\frac{1}{2}}\\
& \le \Oun K^{q}\,\proba_{\!\vecn}\big(\|\leproc(t_{1})\|_{\scaleto{1}{4pt}}\le\gamma'K\big)^{\frac{1}{2}} \\
& \le \Oun K^{q}\Big( \e^{-\frac{\delta}{2}\left(\zeta\frac{\|\vecn\|_{\scaleto{1}{3pt}}}{K}\wedge\,\alpha\right)K}
+\,\Oun\, t_1 \e^{-\frac{\alpha\delta K}{2}} \Big)^{\frac{1}{2}}. 
\end{align*}
To deal with the second term, we observe using Lemma \ref{sortpas} and Proposition \ref{tempscourts} that, 
if $\|\leproc(t_{1})\|_{\scaleto{1}{4pt}}> \gamma' K$, then
\begin{align*}
\proba_{\!\sleproc(t_{1})}\big(T_{\veczer}\le s-t_{1}\big)
& \le  \e^{-\delta\left(\zeta\frac{\|\sleproc(t_{\scaleto{1}{3pt}})\|_{\scaleto{1}{3pt}}}{K}\wedge\,\alpha\right)K}+\,\Oun (s-t_{1})\e^{-\alpha\delta K}\\
& \leq \e^{-\delta\left(\zeta\gamma'\wedge\,\alpha\right)K}+\,\Oun (s-t_{1})\e^{-\alpha\delta K}\\
& =\e^{-\delta\left(\zeta\left(\frac{1}{2} \left(\frac{\zeta \|\vecn\|_{\scaleto{1}{3pt}}}{K} \wedge\, \alpha \right)\right)\wedge\,\alpha\right)K}+\,\Oun (s-t_{1})\e^{-\alpha\delta K}.
\end{align*}
Now
\begin{align*}
\MoveEqLeft \E_{\vecn}\Big[\un_{\{\|\sleproc(t_{1})\|_{\scaleto{1}{4pt}}> \gamma' K\}}
\big|f\big(\leproc(t_{1})\big)\big|\un_{\{T_{\veczer}>t_{1}\}}
\proba_{\!\sleproc(t_{1})}\big(T_{\veczer}\le s-t_{1}\big)\Big]\\
& \leq \Oun \E_{\vecn}\Big[\un_{\{\|\sleproc(t_{1})\|_{\scaleto{1}{4pt}}> \gamma' K\}}
\big|f\big(\leproc(t_{1})\big)\big|\un_{\{T_{\veczer}>t_{1}\}}
\Big]\\
&  \quad \;\times \Big( \e^{-\delta\left(\zeta\left(\frac{1}{2} \left(\frac{\zeta \|\vecn\|_{\scaleto{1}{3pt}}}{K} \wedge\, \alpha\right)\right)
\wedge\,\alpha\right)K}+\,\Oun (s-t_{1})\e^{-\alpha\delta K}\Big)\\
& \leq \Oun \Big(\norm{\vecn}^{\scaleto{q}{4pt}}+K^{q}\Big) \Big( \e^{-\delta\left(\zeta\left(\frac{1}{2} \left(\frac{\zeta \|\vecn\|_{\scaleto{1}{3pt}}}{K} 
\wedge\, \alpha\right)\right)\wedge\,\alpha\right)K}+\,\Oun (s-t_{1})\e^{-\alpha\delta K}\Big)\\
&
\quad \; \times \Big( \e^{-\delta \frac{(1\wedge \zeta)}{2} \left(\frac{\zeta^2 \|\vecn\|_{\scaleto{1}{3pt}}}{2K} \wedge\, \alpha\right)K}+\,
\Oun (s-t_{1})\e^{-\alpha\delta K}\Big).
\end{align*}

(3)-(iii) Let us now prove that for all $\vecn\neq \veczer$,
\begin{align}
\nonumber
W_{3}(\vecn)
&= \left|\E_{\vecn}\Big(f\big(\leproc(t_{1})\big)\un_{\{T_{\veczer}>t_{1}\}}\Big)-\nuk(f)\right| \\
\nonumber
& \leq \Oun (c_q \norm{\vecn}^{\scaleto{q}{4pt}} + K^q)\Big(\e^{-\frac{a (t_{\scaleto{1}{3pt}}-1)}{\log K}} +  \e^{-\lambda_0(K)}
\e^{-\delta\big(\zeta\frac{\|\vecn\|_{\scaleto{1}{3pt}}}{K}\wedge\, \alpha \big)K} \\
\label{A1}
&  \hskip 4cm + \,C(t_1-1)\e^{-\alpha\delta K}+1-\e^{-\lambda_0(K)}\Big).
\end{align}
For $0\leq t_1\leq 1$, using Proposition \ref{tempscourts} we obtain 
\[
\left|
\E_{\vecn}\Big[f\big(\leproc(t_{1})\big)\un_{\{T_{\veczer}>t_{1}\}}\Big]\right|
\leq \Oun (c_q\norm{\vecn}^{\scaleto{q}{4pt}} + K^q).
\]
We now deal with $t_1>1$.
The Markov property gives
\begin{align*}
\E_{\vecn}\Big[f\big(\leproc(t_{1})\big)\un_{\{T_{\veczer}>t_{1}\}}\Big]
& = \E_{\vecn}\Big[\un_{\{T_{\veczer}>t_{1}-1\}}
\E_{\sleproc(t_{1}-1)}\big[f\big(\leproc(1)) \un_{\{T_{\veczer}>1\}}\big]\Big]\\
& = \E_{\vecn}\Big[\un_{\{T_{\veczer}>t_{1}-1\}}
\tilde g(\leproc(t_{1}-1))\Big]
\end{align*}
where
\[
\tilde g (\vecn) = \E_{\vecn}\big[\leproc(1) \un_{\{T_{\veczer}>1\}}\big] \leq g(\vecn)
\] 
is a function bounded by $\Oun  K^q$. 
For $\vecn\neq \veczer$, we use Theorem \ref{decor-thm} and Corollary \ref{auxbornees} to get
\begin{align*}
&\Big|
\E_{\vecn}\Big[\un_{\{T_{\veczer}>t_{1}-1\}}\, 
\E_{\sleproc(t_{1}-1)}\big[f\big(\leproc(1)) \un_{\{T_{\veczer}>1\}}\big]
\Big] \\
& \quad\quad-\proba_{\!\vecn}\big( T_{\veczer}>t_1-1\big) \E_{\snuk}\big[ f\big(\leproc(1)) \un_{\{T_{\veczer}>1\}}\big]
\Big| \\
& \leq\Oun\, K^q\, \e^{\frac{a(t_{\scaleto{1}{3pt}}-1)}{\log K}}.
\end{align*}
Since $\nuk$ is the qsd, we have
\[
 \E_{\snuk}\big[ f\big(\leproc(1)) \un_{\{T_{\veczer}>1\}}\big]= \e^{-\lambda_0(K)} \nuk(f).
\]
Using Corollary \ref{bornepuissance}, Lemma \ref{sortpas} and the properties of $f$ we obtain
\begin{align*}
\MoveEqLeft 
\left|\proba_{\!\vecn}\big( T_{\veczer}>t_1-1\big)\, \E_{\snuk}\big[ f\big(\leproc(1)) \un_{\{T_{\veczer}>1\}}\big]
-\nuk(f)\right|\\
&
\leq \Oun K^q\Big( \e^{-\lambda_0(K)}\e^{-\delta\big(\zeta\frac{\|\vecn\|_{\scaleto{1}{3pt}}}{K}\wedge\, \alpha \big)K} +\, C(t_1-1)\e^{-\beta\delta K}+
1-\e^{-\lambda_0(K)}\Big)
\end{align*}
and \eqref{A1} is proved. 

(3)-(iv) Let us note that
\[
W_{4} \leq |\nuk(f)| \,|\nuk(f)-\nuk(g)|.
\]
Proposition \ref{tempscourts} and  Lemma \ref{f-et-g} give
\[
W_{4} \leq  \Oun K^{\scaleto{2q}{6pt}}\big(1-\e^{-\lambda_0(K)}\big)^{\frac{1}{2}}.
\]

(3)-(v) Collecting the informations given in the four previous estimates, we obtain a precise estimation 
of $\left|J_{1,2}(\vecn) -\nuk(f)^{\scaleto{2}{4pt}}\right|$ for all $\vecn\neq \veczer$.

(3)-(vi)
We have 
\begin{align*}
\MoveEqLeft
\left|
\E_{\vecn}\Big[f\big(\leproc(t_{1})\big) \E_{\sleproc(t_{1}) } \big[ g\big(\leproc(s-t_1)\big)\big]\Big]
-\nuk(f)^{\scaleto{2}{4pt}}
\right|\\
& \leq |J_{2}(\vecn)| + \left|J_{1,2}(\vecn)  -\nuk(f)^{\scaleto{2}{4pt}}
\right|.
\end{align*}
Collecting the above relevant estimates we obtain that there exist $\delta',\zeta', \beta', \theta'$ (all being positive and independent
of $K$) such that
\begin{align*}
\MoveEqLeft
\left|
\E_{\vecn}\Big[f\big(\leproc(t_{1})\big) \E_{\sleproc(t_{1}) } \big[ g\big(\leproc(s-t_1)\big)\big]\Big]-\nuk(f)^{\scaleto{2}{4pt}}
\right| \leq \Oun K^q (c_q\norm{\vecn}^{\scaleto{q}{4pt}} +K^q)\\
&
\times \left(
\un_{\{t_1\leq 1\}}+ \e^{-\delta'\big(\zeta' \frac{\|\vecn\|_{\scaleto{1}{3pt}}}{K}\wedge\, \beta'\big)K} + (s+t_1+1)\e^{-\theta' K}
+ \e^{-\frac{a(s-t_{\scaleto{1}{3pt}})}{\log K}} + \e^{-\frac{a t_{\scaleto{1}{3pt}}}{\log K}}
\right).
\end{align*}
Now we have
\begin{align*}
\MoveEqLeft \frac{\scaleto{2}{6pt}}{T^{\scaleto{2}{4pt}}} \left|\int_{0}^{T-1}  \dd s\int_{0}^{s}\E_{\vecn}
\Big[f\big(\leproc(t_{1})\big)\, \E_{\leproc(t_{1}) } \big[ g\big(\leproc(s-t_1)\big)\big]\Big]\dd t_{1}- \nuk(f)^{\scaleto{2}{4pt}}\right|\\
& \leq \Oun K^q (c_q\norm{\vecn}^{\scaleto{q}{4pt}} +K^q) 
\left( 
\frac{1}{T}+ \e^{-\delta'\big(\zeta' \frac{\|\vecn\|_{\scaleto{1}{3pt}}}{K}\wedge\, \beta'\big)K} + T \e^{-\theta' K} + \frac{\log K}{T}
\right).
\end{align*}
The final result for $T\geq 1$ follows by collecting all estimates. For $T<1$ the bound follows directly from Proposition \ref{tempscourts}.

\subsection{Starting from the qsd: proof of Corollary \ref{varcornu}}

The result follows from Proposition \ref{varcorn} by integrating over $\vecn$ with respect to the qsd.
More precisely, we have
\begin{align*}
\MoveEqLeft \E_{\vecn}\Big[\big|S_{f}(T,K)-\nuk(f)\big|^{\scaleto{2}{4pt}}\Big]\leq
C'\|f\|_{\sK\!,q}^{\scaleto{2}{4pt}} \bigg(\frac{(c_q\norm{\vecn}^{\scaleto{q}{4pt}} +K^q) \norm{\vecn}^{\scaleto{q}{4pt}} +K^q \log K}{T\vee 1}\\
& +(c_q\norm{\vecn}^{\scaleto{q}{4pt}} +K^q) K^q \e^{-\delta'\big(\zeta' \frac{\|\vecn\|_{\scaleto{1}{3pt}}}{K}\wedge\, \beta'\big)K} 
+ (c_q\norm{\vecn}^{\scaleto{q}{4pt}} +K^q)T K^q \e^{-\theta'K} 
\bigg).
\end{align*}
The integrals of the first and third terms with respect to the q.s.d are estimated using Corollary \ref{bornepuissance}.
We deal with second term:
\begin{align*}
\MoveEqLeft \int  (c_q\norm{\vecn}^{\scaleto{q}{4pt}} +K^q) K^q  \e^{-\delta'\big(\zeta' \frac{\|\vecn\|_{\scaleto{1}{3pt}}}{K}\wedge\, \beta'\big)K} \dd\nuk(\vecn)=\\
& 
\int \un_{\{\{\norm{\vecn} < \beta'K/\zeta'\}\}\cap\, \mathcal{D}\}} (c_q\norm{\vecn}^{\scaleto{q}{4pt}} +K^q) K^q
\e^{-\delta'\big(\zeta' \frac{\|\vecn\|_{\scaleto{1}{3pt}}}{K}\wedge\, \beta'\big)K} \dd\nuk(\vecn)\\
& + 
\int  \un_{\{\{\norm{\vecn} < \beta'K/\zeta'\}\}\cap\, \mathcal{D}^c\}} (c_q\norm{\vecn}^{\scaleto{q}{4pt}} +K^q) K^q
\e^{-\delta'\big(\zeta' \frac{\|\vecn\|_{\scaleto{1}{3pt}}}{K}\wedge\, \beta'\big)K} \dd\nuk(\vecn)\\
& +
\int  \un_{\{\norm{\vecn} \geq \beta'K/\zeta'\}} (c_q\norm{\vecn}^{\scaleto{q}{4pt}} +K^q) K^q
\e^{-\delta'\big(\zeta' \frac{\|\vecn\|_{\scaleto{1}{3pt}}}{K}\wedge\, \beta'\big)K} \dd\nuk(\vecn).
\end{align*}
The third integral is estimated using the fact that the integrand is exponentially small in $K$.
The second integral is estimated using the first estimate in Corollary \ref{bornepuissance}.
We finally deal with the first integral. If $\vecn\in \mathcal{D}$ then
$\norm{\vecn}\geq \normdeux{\vecn}\geq \normdeux{\vecnf}\scaleto{/2}{9pt}$.
If $\{\|\vecn\|_{\scaleto{1}{4pt}} < \beta'K/\zeta'\}\cap \mathcal{D} \neq \emptyset$, on this set we have
$\e^{-\delta'\big(\zeta' \frac{\|\vecn\|_{\scaleto{1}{3pt}}}{K}\wedge\, \beta'\big)K}\leq \e^{-\delta'\zeta' \frac{\|\svecnf\|_{\scaleto{2}{3pt}}}{2}}$
(exponentially small in $K$). The estimate follows.

\section{Counting the number of births}\label{nbsauts}

Denote by $\mathcal{N}^{\sK}_\ell(t_{1},t_{2})$ the number of births of species of type $\ell$ between the times $t_{1}$ and $t_{2}$ ($1\leq \ell \leq d$, $0\leq t_1\leq t_2$). 

\begin{proposition}\label{prop-rentree}
For any probability measure $\mathfrak{m}$ on $\domaine$, we have
\[
\E_{\mathfrak{m}}\big[\mathcal{N}^{\sK}_\ell(t_{1},t_{2}) \big]=
K \int_{t_{1}}^{t_{2}} \E_{\mathfrak{m}}\!\left[B_{\ell}\left(\frac{\leproc(s)}{K}\right)\right] \dd s
\] 
and
\begin{align*}
\MoveEqLeft \E_{\mathfrak{m}}\bigg[\Big(\mathcal{N}^{\sK}_\ell(t_{1},t_{2})
 - \E_{\mathfrak{m}}\big[\mathcal{N}^{\sK}_\ell(t_{1},t_{2})\big] \Big)^{\scaleto{2}{4pt}}\bigg]
\leq  2K\E_{\mathfrak{m}}\!\left[ \,\int_{t_{1}}^{t_{2}} B_{\ell}\left(\frac{\leproc(s)}{K}\right) \dd s \right]\\
& \qquad\qquad+ \E_{\mathfrak{m}} \!\left[\bigg(\int_{t_{1}}^{t_{2}}  K B_{\ell}\left(\frac{\leproc(s)}{K}\right)  \dd s
- \E_{\mathfrak{m}}\big[\,\mathcal{N}^{\sK}_\ell(t_{1},t_{2})\big] \Big)^{\scaleto{2}{4pt}}\right].
\end{align*}
\end{proposition}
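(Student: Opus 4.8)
The plan is to treat $t\mapsto\mathcal{N}^{\sK}_\ell(0,t)$ as the counting process of the jumps of $\leproc$ in the direction $\vece{\ell}$ and to use its compensator. From the form of the generator \eqref{gene}, this counting process has $(\mathcal{F}_t)$-intensity $s\mapsto KB_\ell(\leproc(s)/K)$, so that (by It\^o's formula for the pure-jump process $\leproc$, as in the proof of Theorem~\ref{borneexp}), under $\proba_{\mathfrak{m}}$ for any initial law $\mathfrak{m}$ on $\domaine$,
\[
M_\ell(t):=\mathcal{N}^{\sK}_\ell(0,t)-K\int_0^t B_\ell\!\Big(\frac{\leproc(s)}{K}\Big)\dd s
\]
is a local martingale for the natural filtration $(\mathcal{F}_t)$. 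Since $\vecB$ is polynomially bounded and, by Corollary~\ref{auxbornees} and Proposition~\ref{tempscourts}, $\E_{\mathfrak{m}}[\norm{\leproc(s)}^{q}]$ is finite for every $q$ and locally bounded in $s$, the compensator $K\int_0^{t_2}B_\ell(\leproc(s)/K)\dd s$ has finite moments of every order; localizing at the stopping times $\mathcal{T}_A$ of \eqref{leTA}, using these bounds for uniform integrability and letting $A\to+\infty$, $M_\ell$ is a genuine square-integrable martingale on $[0,t_2]$ and $\mathcal{N}^{\sK}_\ell(t_1,t_2)$ is square-integrable. Taking $\E_{\mathfrak{m}}$ of $M_\ell(t_2)-M_\ell(t_1)$ and applying Fubini gives the first identity.

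For the variance estimate the extra ingredient is the bracket of $M_\ell$: all of its jumps have size $+1$ and its compensator is continuous, so It\^o applied to $M_\ell^2$ shows that $M_\ell(t)^2-K\int_0^t B_\ell(\leproc(s)/K)\dd s$ is again a local martingale, whence, localizing as above,
\[
\E_{\mathfrak{m}}\!\big[(M_\ell(t_2)-M_\ell(t_1))^2\big]=K\,\E_{\mathfrak{m}}\!\Big[\int_{t_1}^{t_2} B_\ell\!\Big(\frac{\leproc(s)}{K}\Big)\dd s\Big].
\]
Now put $A:=K\int_{t_1}^{t_2}B_\ell(\leproc(s)/K)\dd s$ and $\Delta M:=M_\ell(t_2)-M_\ell(t_1)=\mathcal{N}^{\sK}_\ell(t_1,t_2)-A$; the first identity gives $\E_{\mathfrak{m}}[\mathcal{N}^{\sK}_\ell(t_1,t_2)]=\E_{\mathfrak{m}}[A]$, hence
\[
\mathcal{N}^{\sK}_\ell(t_1,t_2)-\E_{\mathfrak{m}}\big[\mathcal{N}^{\sK}_\ell(t_1,t_2)\big]=\Delta M+\big(A-\E_{\mathfrak{m}}[A]\big).
\]
Squaring and taking $\E_{\mathfrak{m}}$ produces $\E_{\mathfrak{m}}[(\Delta M)^2]$, which the bracket identity equals $K\E_{\mathfrak{m}}[\int_{t_1}^{t_2}B_\ell(\leproc(s)/K)\dd s]$ and which is the first term of the bound; the compensator-fluctuation term $\E_{\mathfrak{m}}[(A-\E_{\mathfrak{m}}[A])^2]=\E_{\mathfrak{m}}[(A-\E_{\mathfrak{m}}[\mathcal{N}^{\sK}_\ell(t_1,t_2)])^2]$, the second term; and a cross term $2\E_{\mathfrak{m}}[\Delta M\,(A-\E_{\mathfrak{m}}[A])]=2\E_{\mathfrak{m}}[\Delta M\,A]$. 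The cross term may be disposed of either crudely by $2ab\le a^2+b^2$ (losing only a constant in front of the second term) or, to get the precise constants, through the integration-by-parts identity $\Delta M\cdot A=\int_{t_1}^{t_2}A_s\,\dd M_\ell(s)+\int_{t_1}^{t_2}(M_\ell(s)-M_\ell(t_1))\,\dd A_s$, with $A_s:=K\int_{t_1}^{s}B_\ell(\leproc(v)/K)\dd v$ continuous and adapted and $\int_{t_1}^{\cdot}A_s\,\dd M_\ell(s)$ a true martingale by the integrability above, together with Cauchy--Schwarz, the bracket identity and the moment bounds. Collecting the three contributions yields the announced inequality.

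The main obstacle is not conceptual but one of rigor: the jump rates $KB_\ell(\leproc(s)/K)$ are a priori unbounded, so one cannot invoke directly the elementary theory of counting processes with bounded intensity. This is handled by localizing at the stopping times $\mathcal{T}_A$ of \eqref{leTA} and passing to the limit with the uniform (in the starting state, over $s\in[0,t_2]$) moment bounds of Corollary~\ref{auxbornees} and Proposition~\ref{tempscourts} and the polynomial growth of $\vecB$; the same ingredients also give the square-integrability of $\mathcal{N}^{\sK}_\ell(t_1,t_2)$, without which the variance estimate is not even well posed, and the control of the cross term.
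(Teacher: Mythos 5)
Your proposal is correct and follows essentially the same route as the paper: the paper realizes $\mathcal{N}^{\sK}_\ell$ as a stochastic integral against a Poisson point measure and invokes the compensation formula and the $\mathds{L}^2$-isometry for jump processes, which is exactly your compensator/bracket identity $\E_{\mathfrak{m}}[(M_\ell(t_2)-M_\ell(t_1))^2]=K\,\E_{\mathfrak{m}}[\int_{t_1}^{t_2}B_\ell(\leproc(s)/K)\dd s]$, and then splits $\mathcal{N}^{\sK}_\ell-\E_{\mathfrak{m}}[\mathcal{N}^{\sK}_\ell]$ into the martingale part plus the compensator fluctuation just as you do (the paper simply uses $(a+b)^2\le 2a^2+2b^2$ for the cross term). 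Your extra care with localization at $\mathcal{T}_A$ and the moment bounds to justify square-integrability is a welcome addition that the paper leaves implicit.
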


\begin{proof}
Recall that the generator of the process is given in \eqref{gene}. Let us now give a pathwise representation of the process.  We introduce $d$ 
independent point Poisson measures $M_{\ell}(\!\dd s, \!\dd\theta)$ on $\mathds{R}_{\scaleto{+}{4.5pt}}^{\scaleto{2}{4.5pt}}$ with intensity
$\dd s \dd\theta$. We define the $d$-dimensionnal c\`ad-l\`ag process $(N_{t}, t\in\mathds{R}_{\scaleto{+}{4.5pt}})$
\begin{align*}
\MoveEqLeft  N_{t} = N_{0} + \sum_{\ell = 1}^d \int_{0}^t \int M_{\ell}(\!\dd s,\! \dd\theta) \\
&  \times \left(\un_{\left\{\theta\,\,\leq K B_{\ell}\left(\frac{\sleproc(s)}{K}\right)\right\}}  -
\un_{\left\{K B_{\ell}\left(\frac{\sleproc(s)}{K}\right)\leq\, \theta\,\leq K \left(B_{\ell}\left(\frac{\sleproc(s)}{K}\right) + 
D_{\ell}\left(\frac{\sleproc(s)}{\sK}\right)\right)\right\}} \right) .
\end{align*}
Then the number of births of species of type $\ell$ occuring between the times $t_{1}$ and $t_{2}$ is given by
\[
\mathcal{N}^{\sK}_\ell(t_{1},t_{2}) =  \int_{t_{1}}^{t_{2}} \int \un_{\left\{\theta\,\leq K B_{\ell}\left(\frac{\sleproc(s)}{K}\right)\right\}} 
M_{\ell}(\!\dd s, \!\dd\theta).
\]
Using the Markov property we get at once the first identity.\newline 
We now establish the estimate. Indeed
\begin{align*}
&\E_{\mathfrak{m}}\left[\Big(\mathcal{N}^{\sK}_\ell(t_{1},t_{2}) - \E_{\mathfrak{m}}\big(\mathcal{N}^{\sK}_\ell(t_{1},t_{2}) \Big)^{\scaleto{2}{4pt}}\right]
\\ 
& \leq 2\E_{\mathfrak{m}}\!\left[\Big(\mathcal{N}^{\sK}_\ell(t_{1},t_{2}) - \int_{t_{1}}^{t_{2}} 
K B_{\ell}\left(\frac{\leproc(s)}{K}\right) \dd s\Big)^{\scaleto{2}{4pt}}\right]\\
&  \quad + 2\E_{\mathfrak{m}}\! \left[\Big(\int_{t_{1}}^{t_{2}}  K B_{\ell}\left(\frac{\leproc(s)}{K}\right) \dd s -
\E_{\mathfrak{m}}\big[\,\mathcal{N}^{\sK}_\ell(t_{1},t_{2})\big] \Big)^{\scaleto{2}{4pt}} \right].
\end{align*}
By the $\mathds{L}^2$-isometry for jump processes (see \cite[Formula (3.9) p.62]{Ikeda}), we have 
\begin{align*} 
\MoveEqLeft \E_{\mathfrak{m}}\!\left[\Big(\mathcal{N}^{\sK}_\ell(t_{1},t_{2}) - \int_{t_{1}}^{t_{2}}  K B_{\ell}\left(\frac{\leproc(s)}{K}\right) \dd s\Big)^{\scaleto{2}{4pt}}\right]\\
&=  \int_{t_{1}}^{t_{2}} \int \E_{\mathfrak{m}}\bigg[\bigg(\un_{\left\{\theta\leq K B_{\ell}\big(\frac{\sleproc(s)}{K}\big)\right\}}\bigg)^{\scaleto{2}{4pt}} \bigg]\dd s \dd \theta \\
&= \int_{t_{1}}^{t_{2}} \E_{\mathfrak{m}}\left[ K B_{\ell}\left(\frac{\leproc(s)}{K}\right) \right]\dd s.
\end{align*}
This finishes the proof.
\end{proof}

\section{Gaussian limit for the rescaled qsd}

We have the following theorem of independent interest. A part of this theorem partially generalizes a result obtained
in \cite{ccm1} for models involving a single species ($d=1$). Recall that $\vecnf = \lfloor K\vecxf \rfloor$.
\begin{theorem}
\label{confinement}
For all $K>1$, define the measure $\mathfrak{a}_K$ on the Borel $\sigma$-algebra of $\real^d$ by
\[
\mathfrak{a}_{\sK}(\cdot)=\nuk\left(\left\{ \vecn\in \domaine : \frac{\vecn-\vecnf}{\sqrt{K}}\in \cdot\right\}\right).
\]
Then $(\mathfrak{a}_{\sK})_{\sK}$ converges weakly to the centered Gaussian measure with covariance matrix 
\[
\EuScript{S}=\int_{0}^{\infty}\e^{\tau M^*}\EuScript{B}^*\e^{\tau {M^*}^{\intercal}}\dd\tau.
\]
where $\EuScript{B}^*$ is the diagonal matrix with entries $B_{\ell}(\vecxf)=D_{\ell}(\vecxf)$.
The matrix $\EuScript{S}$ is also the unique symmetric solution of the (Lyapunov)
equation (fluctuation-dissipation relation)
\begin{equation}\label{pierre}
M^* \EuScript{S}+\EuScript{S} {M^*}^{\intercal}=-\EuScript{B}^*\,.
\end{equation}
\end{theorem}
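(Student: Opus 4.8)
The plan is to prove the weak convergence through characteristic functions, using the eigen-identity \eqref{eq:miracle} as a substitute for stationarity. For $\theta\in\real^d$ set
\[
f_\theta(\vecn)=\exp\!\Big(\ii\,\big\langle\theta,(\vecn-\vecnf)/\sqrt{K}\,\big\rangle\Big),\qquad \phi_K(\theta)=\nuk(f_\theta)=\int f_\theta\,\dd\nuk .
\]
First I would record the tightness of $(\mathfrak{a}_{\sK})_{\sK}$: by Theorem~\ref{bornemom} (with $q=1$) and \eqref{nfvsxf},
\[
\int \|(\vecn-\vecnf)/\sqrt{K}\|_{2}^{2}\,\dd\mathfrak{a}_{\sK}=\frac1K\int\|\vecn-\vecnf\|_{2}^{2}\,\dd\nuk=\Oun ,
\]
and in fact every moment of $\mathfrak{a}_{\sK}$ is bounded uniformly in $K$; hence the family is tight and carries uniformly integrable polynomials, which also legitimizes differentiating $\phi_K$ under the integral sign and passing $\nabla\phi_K$ to the limit.

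The core step is to apply $\gen$ to $f_\theta$ and integrate against $\nuk$. Since $f_\theta(\vecn\pm\vece{\ell})=\e^{\pm\ii\theta_\ell/\sqrt{K}}\,f_\theta(\vecn)$, a second-order expansion of $\e^{\pm\ii\theta_\ell/\sqrt{K}}$ gives
\[
\gen f_\theta(\vecn)=f_\theta(\vecn)\Big[\ii\sqrt{K}\sum_{\ell}\theta_\ell X_\ell\big(\vecn/K\big)-\tfrac12\sum_{\ell}\theta_\ell^2\big(B_\ell(\vecn/K)+D_\ell(\vecn/K)\big)\Big]+\mathcal{E}_K(\vecn,\theta),
\]
where $\mathcal{E}_K$ is a third-order remainder, bounded by $\Oun\,\|\theta\|^3 K^{-1/2}$ times a fixed polynomial in $\norm{\vecn}/K$ thanks to the polynomial bounds on $\vecB$ and $\vecD$. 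On $\mathcal{D}$ I would expand $X_\ell(\vecn/K)$ to first order around $\vecxf$, using $\vecX(\vecxf)=\veczer$, $D\vecX(\vecxf)=M^*$ and $\vecnf/K-\vecxf=\Oun/K$, so that $\sqrt{K}\,X_\ell(\vecn/K)=\sum_m M^*_{\ell m}(n_m-n^*_m)/\sqrt{K}+\Oun\,K^{-1/2}\big(1+\|\vecn-K\vecxf\|_{2}^{2}/K\big)$, and likewise $B_\ell(\vecn/K)+D_\ell(\vecn/K)=2B_\ell(\vecxf)+\Oun\,\|\vecn-K\vecxf\|_{2}/K$. Integrating against $\nuk$, using $\nuk(\gen f_\theta)=-\lambda_0(K)\phi_K$ (exponentially small by \eqref{eq:lambda0}), Theorem~\ref{bornemom} to absorb the centered moments that appear, Proposition~\ref{borneexterieur} together with Corollary~\ref{bornepuissance} to discard the contributions coming from $\mathcal{D}^c$, and observing that $\int f_\theta\,(n_m-n^*_m)/\sqrt{K}\,\dd\nuk=-\ii\,\partial_{\theta_m}\phi_K(\theta)$, one arrives at
\[
\big\langle {M^*}^{\intercal}\theta,\nabla_\theta\phi_K(\theta)\big\rangle-\big\langle\theta,\EuScript{B}^*\theta\big\rangle\,\phi_K(\theta)=\Oun\,(1+\|\theta\|^3)\,K^{-1/2}.
\]

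Then I would pass to the limit. By tightness, any subsequence has a sub-subsequence along which $\mathfrak{a}_{\sK}$ converges weakly to some $\mathfrak{a}$; by the uniform moment bounds $\phi_K\to\widehat{\mathfrak{a}}$ and $\nabla\phi_K\to\nabla\widehat{\mathfrak{a}}$ locally uniformly, so $\widehat{\mathfrak{a}}$ is a $C^1$ solution of the first-order linear PDE $\langle {M^*}^{\intercal}\theta,\nabla_\theta\phi\rangle=\langle\theta,\EuScript{B}^*\theta\rangle\,\phi$ with $\phi(0)=1$. Integrating this equation along the characteristics $t\mapsto\e^{t{M^*}^{\intercal}}\theta$, which tend to $0$ as $t\to+\infty$ because $M^*$, hence ${M^*}^{\intercal}$, is Hurwitz by \ref{H3}, determines $\widehat{\mathfrak{a}}$ uniquely as the characteristic function of the centered Gaussian law announced in Theorem~\ref{confinement}, the matrix $\EuScript{S}=\int_0^\infty\e^{\tau M^*}\EuScript{B}^*\e^{\tau{M^*}^{\intercal}}\dd\tau$ dropping out of the $t$-integration. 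That this $\EuScript{S}$ satisfies the Lyapunov relation \eqref{pierre} and is its only symmetric solution follows from $\frac{\dd}{\dd\tau}\big(\e^{\tau M^*}\EuScript{B}^*\e^{\tau{M^*}^{\intercal}}\big)=M^*\e^{\tau M^*}\EuScript{B}^*\e^{\tau{M^*}^{\intercal}}+\e^{\tau M^*}\EuScript{B}^*\e^{\tau{M^*}^{\intercal}}{M^*}^{\intercal}$, integrated over $[0,\infty)$, and from the fact that $M^*Z+Z{M^*}^{\intercal}=0$ forces $\e^{\tau M^*}Z\e^{\tau{M^*}^{\intercal}}\equiv Z$, which tends to $0$. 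Since the limit does not depend on the subsequence, the whole family $(\mathfrak{a}_{\sK})_{\sK}$ converges.

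The main obstacle is the uniform-in-$K$ control of the error terms in the generator expansion once integrated against $\nuk$: one must check that the first-order Taylor remainder of $\vecX$ contributes only $\Oun\,K^{-1/2}$, which rests on the centered bound $\int\|\vecn-K\vecxf\|_{2}^{2}\,\dd\nuk=\Oun\,K$ of Theorem~\ref{bornemom}, and that the polynomially growing terms restricted to $\mathcal{D}^c$ are negligible, which rests on $\nuk(\mathcal{D}^c)\le\e^{-\pcte{borneexterieur}K}$ (Proposition~\ref{borneexterieur}) and Corollary~\ref{bornepuissance}; a secondary point is the convergence $\nabla\phi_K\to\nabla\widehat{\mathfrak{a}}$, which uses the uniform integrability of $(\vecn-\vecnf)/\sqrt{K}$ under $\mathfrak{a}_{\sK}$. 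As a consistency check, dividing \eqref{fluceq} by $2K$ and letting $K\to\infty$ already exhibits the limiting covariance as a solution of \eqref{pierre}; that argument alone, however, does not yield Gaussianity, for which the characteristic-function route (or, alternatively, a more cumbersome recursive method-of-moments computation based on \eqref{eq:miracle}, in the spirit of the proof of Theorem~\ref{bornemom}) seems to be needed.
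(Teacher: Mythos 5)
Your proposal is correct and follows essentially the same route as the paper: tightness from Theorem \ref{bornemom}, applying the generator to the rescaled characteristic function and using \eqref{eq:miracle}--\eqref{eq:lambda0}, Taylor expansion controlled by the centered moment bounds and by Proposition \ref{borneexterieur}/Corollary \ref{bornepuissance} on $\mathcal{D}^c$, and identification of the limit as the unique $C^1$ solution of the first-order PDE via the characteristics $t\mapsto \e^{t{M^*}^{\intercal}}\theta$ (the paper's Lemma \ref{uniquesol}), with the Lyapunov identity obtained by the same differentiation-and-integration argument. The only differences are cosmetic (you make the error bookkeeping and the uniqueness of the symmetric Lyapunov solution slightly more explicit).
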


\begin{remark}
We have 
\[
\lim_{\sK\to+\infty} \frac{\Sigma^{\sK}}{K}=\EuScript{S}.
\]
This follows by dividing out equation \eqref{kuboeq} by $K$, letting $K$ tend to infinity, and using the uniqueness
of the (symmetric) solution of \eqref{pierre}.
\end{remark}

\begin{proof}
By Theorem \ref{bornemom}, the family of measures $(\mathfrak{a}_{\sK})_{\sK}$ is tight.
For $\vp\in \real^d$ define
\[
H_{\sK}(\hspace{1pt}\vp)=\int \e^{\ii \frac{\langle\, \vp,(\vecn-\svecnf)\rangle}{\sqrt{K}}}\dd \nuk(\vecn).
\]
It follows also from Theorem \ref{bornemom} that the family of functions $(H_{\sK})$ is uniformly bounded in $C^{2}$.
We will prove that 
\begin{equation}\label{pelouse}
\lim_{\sK\to\infty}H_{\sK}(\hspace{1pt}\vp)=\e^{-\langle\, \vp,\hspace{1pt}\EuScript{S}\vp\rangle},\;\forall \vp\in \mathds{R}^d.
\end{equation}
This will entail that there is only one weak accumulation point for $(\mathfrak{a}_{\sK})_{\sK}$.
The proof will be the consequence of Prokhorov Theorem \cite{patrick}.
Using \eqref{eq:miracle} and \eqref{eq:lambda0}, we have
\[
\lim_{\sK\to\infty}\nuk\!\left(\gen \e^{\ii\frac{\langle\, \vp,(\, \cdot-\svecnf)\rangle}{\sqrt{K}}}\right)=0\,.
\]
We also have
\begin{align*}
\MoveEqLeft[4] \nuk\!\left(\gen \e^{\ii \frac{\langle\, \vp,(\,\cdot\,-\svecnf)\rangle}{\sqrt{K}}}\right)
=
K\sum_{\ell=1}^{d}\int\dd\nuk(\vecn)\e^{\ii\frac{\langle\, \vp,(\vecn-\svecnf)\rangle}{\sqrt{K}}} \\
& \times \left(B_{\ell}\left(\frac{\vecn}{K}\right)\left(\e^{\ii\frac{p_{\ell}}{\sqrt{K}}}-1\right)
+ D_{\ell}\left(\frac{\vecn}{K}\right)\left(\e^{-\ii\frac{p_{\ell}}{\sqrt{K}}}-1\right)\right).
\end{align*}
Using Taylor expansion, and  the moments estimates and the polynomial bounds on $B_{\ell}$ and $D_{\ell}$
(and $B_{\ell}(\vecxf)=D_{\ell}(\vecxf)$) we obtain
\begin{align*}
\MoveEqLeft \nuk\!\left(\gen \e^{\ii\frac{\langle\, \vp,(\,\ushort{\cdot}\,-\svecnf)\rangle}{\sqrt{K}}}\right)
\\
& =-\sum_{\ell=1}^{d}B_{\ell}\left(\frac{\vecnf}{K}\right) p_{\ell}^{\scaleto{2}{4pt}}\,H_{\sK}(\vp)+ \ii\sum_{\ell=1}^{d} p_{\ell}\sum_{j=1}^{d}
\left(\partial_{j}B_{\ell}\left(\frac{\vecnf}{K}\right)-\partial_{j}D_{\ell}\left(\frac{\vecnf}{K}\right)\right)\\
& \qquad \times \int \e^{\ii\frac{\langle\, \vp, (\vecn-\svecnf)\rangle}{\sqrt{K}}}
\frac{n_{j}^{\scaleto{*}{2.5pt}}-n_{j}^{\scaleto{*}{2.5pt}}}{\sqrt{K}} \dd\nuk(\vecn)
+\mathcal{O}\left(\frac{1}{\sqrt{K}}\right)\\
& = -\sum_{\ell=1}^{d}B_{\ell}\left(\frac{\vecnf}{K}\right) p_{\ell}^{\scaleto{2}{4pt}}\, H_{\sK}(\vp)\\
& \qquad +\sum_{\ell=1}^{d}p_{\ell}
\sum_{j=1}^{d}\left(\partial_{j}B_{\ell}\left(\frac{\vecnf}{K}\right)-\partial_{j}D_{\ell}\left(\frac{\vecnf}{K}\right)\right)\partial_{p_{j}}H_{\sK}(\vp)+\mathcal{O}\left(\frac{1}{\sqrt{K}}\right)\\
& =-\sum_{\ell=1}^{d}B_{\ell}(\vecxf)\,p_{\ell}^{\scaleto{2}{4pt}}\,H_{\sK}(\vp)+\sum_{\ell=1}^{d} p_{\ell}\sum_{j=1}^{d}
M_{\ell,j}^*\,\partial_{p_{j}}H_{\sK}(\vp)+\mathcal{O}\left(\frac{1}{\sqrt{K}}\right).
\end{align*}
We conclude that every accumulation point $\breve{H}$ of $(H_{\sK})_{\sK}$ is bounded in
$C^{1}$, satisfies $\breve{H}(\ushort{0})=1$, and is a solution of the equation
\[
-\sum_{\ell=1}^{d}B_{\ell}(\vecxf)\,p_{\ell}^{\scaleto{2}{4pt}}\,\breve{H}(\vp)+\sum_{\ell=1}^{d} p_{\ell}\sum_{j=1}^{d}
M_{\ell,j}^*\,\partial_{p_{j}}\breve{H}(\vp)=0.
\]
Then \eqref{pelouse} follows from Lemma \ref{uniquesol} (stated and proved right after this proof) with $A=M^*$. 
\end{proof}

\begin{lemma}\label{uniquesol}
Let $(B_{j})$ be $d$ strictly positive numbers and $A$ a real $d\times
d$ matrix
such that $\mathrm{Sp}(A)\subset\{z\in\mathds{C}: \operatorname{Re}(z)<0\}$. Then 
there exists a unique $C^{1}(\mathds{R}^d,\mathds{R})$ function $H$ satisfying
$H(\ushort{0})=1$ and
\begin{equation}
\label{pde}
-\sum_{\ell=1}^{d}B_{\ell}\, p_{\ell}^{\scaleto{2}{4pt}}\,H(\hspace{1pt}\vp)+\sum_{\ell=1}^{d} p_{\ell}\sum_{j=1}^{d}A_{\ell,\,j}\,\partial_{p_{j}}H(\hspace{1pt}\vp)=0,\; \vp\in \mathds{R}^d.
\end{equation}
This function is given by
\[
H(\hspace{1pt}\vp)=\e^{-\langle\, \vp,\EuScript{S}\vp \rangle}
\]
where
\[
\EuScript{S}=\int_{0}^{\infty}\e^{\tau A}\EuScript{B}\e^{\tau A^{\intercal}}\dd\tau
\]
where $\EuScript{B}$ is the diagonal matrix with entries $(B_{j})$.
The matrix $S$ is also the unique symmetric solution of the
equation 
\[
A \EuScript{S}+\EuScript{S} A^{\intercal}=-\EuScript{B}.
\]
\end{lemma}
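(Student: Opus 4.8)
The plan is to establish, in order, the convergence of the integral defining $\EuScript{S}$, the Lyapunov identity $A\EuScript{S}+\EuScript{S}A^{\intercal}=-\EuScript{B}$ together with its uniqueness, the fact that $H(\vp)=\e^{-\langle\vp,\EuScript{S}\vp\rangle}$ solves \eqref{pde}, and finally uniqueness of the $C^{1}$ solution of \eqref{pde} with the normalization $H(\ushort{0})=1$; only this last step carries real content.

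First I would invoke the standard fact that $\mathrm{Sp}(A)\subset\{\operatorname{Re}(z)<0\}$ yields constants $C>0$ and $\lambda>0$ with $\|\e^{\tau A}\|\le C\,\e^{-\lambda\tau}$ for all $\tau\ge 0$ (the same for $A^{\intercal}$, which has the same spectrum). Hence $\tau\mapsto\e^{\tau A}\EuScript{B}\,\e^{\tau A^{\intercal}}$ is absolutely integrable and $\EuScript{S}$ is well defined; it is symmetric because each integrand is (as $\EuScript{B}^{\intercal}=\EuScript{B}$), and $\langle\vp,\EuScript{S}\vp\rangle=\int_{0}^{\infty}\langle\e^{\tau A^{\intercal}}\vp,\EuScript{B}\,\e^{\tau A^{\intercal}}\vp\rangle\,\dd\tau>0$ for $\vp\neq\ushort{0}$, so $\EuScript{S}$ is positive definite. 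The Lyapunov identity follows by integrating $A\,\e^{\tau A}\EuScript{B}\,\e^{\tau A^{\intercal}}+\e^{\tau A}\EuScript{B}\,\e^{\tau A^{\intercal}}A^{\intercal}=\frac{\dd}{\dd\tau}\big(\e^{\tau A}\EuScript{B}\,\e^{\tau A^{\intercal}}\big)$ from $0$ to $\infty$, the boundary term at $\infty$ vanishing by the decay estimate. Uniqueness — in fact among all, not merely symmetric, solutions — is the same computation run backwards: if $A\Delta+\Delta A^{\intercal}=0$ then $\frac{\dd}{\dd\tau}\big(\e^{\tau A}\Delta\,\e^{\tau A^{\intercal}}\big)=0$, so $\e^{\tau A}\Delta\,\e^{\tau A^{\intercal}}=\Delta$ for all $\tau\ge 0$, and letting $\tau\to\infty$ forces $\Delta=0$.

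Next I would check that $H(\vp)=\e^{-\langle\vp,\EuScript{S}\vp\rangle}$ satisfies \eqref{pde}. Since $\EuScript{S}$ is symmetric, $\partial_{p_{j}}H(\vp)=-2(\EuScript{S}\vp)_{j}\,H(\vp)$, so the left-hand side of \eqref{pde} equals $H(\vp)\big(-\langle\vp,\EuScript{B}\vp\rangle-2\langle\vp,A\EuScript{S}\vp\rangle\big)$; using symmetry of $\EuScript{S}$ once more, $2\langle\vp,A\EuScript{S}\vp\rangle=\langle\vp,(A\EuScript{S}+\EuScript{S}A^{\intercal})\vp\rangle=-\langle\vp,\EuScript{B}\vp\rangle$, so the bracket vanishes, while $H(\ushort{0})=1$ is immediate.

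The crux is uniqueness of the $C^{1}$ solution, which I would obtain by the method of characteristics. Rewrite \eqref{pde} as $\langle A^{\intercal}\vp,\nabla H(\vp)\rangle=\langle\vp,\EuScript{B}\vp\rangle\,H(\vp)$. Fix $\vp_{0}\in\mathds{R}^{d}$ and set $\vp(s)=\e^{sA^{\intercal}}\vp_{0}$; then $\phi(s):=H(\vp(s))$ is $C^{1}$ and, by the chain rule and the equation, solves the scalar linear ODE $\phi'(s)=\langle\vp(s),\EuScript{B}\vp(s)\rangle\,\phi(s)$, hence $H(\vp_{0})=\phi(s)\,\exp\!\left(-\int_{0}^{s}\langle\vp(u),\EuScript{B}\vp(u)\rangle\,\dd u\right)$ for every $s\ge 0$. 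Since $\mathrm{Sp}(A^{\intercal})\subset\{\operatorname{Re}(z)<0\}$ we have $\vp(s)\to\ushort{0}$ as $s\to+\infty$, so $\phi(s)=H(\vp(s))\to H(\ushort{0})=1$ by continuity of $H$, while $\int_{0}^{s}\langle\vp(u),\EuScript{B}\vp(u)\rangle\,\dd u=\int_{0}^{s}\langle\vp_{0},\e^{uA}\EuScript{B}\,\e^{uA^{\intercal}}\vp_{0}\rangle\,\dd u\to\langle\vp_{0},\EuScript{S}\vp_{0}\rangle$. Letting $s\to\infty$ gives $H(\vp_{0})=\e^{-\langle\vp_{0},\EuScript{S}\vp_{0}\rangle}$, which simultaneously proves uniqueness and recovers the formula. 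The only ingredient requiring care is the exponential decay $\|\e^{\tau A}\|\le C\,\e^{-\lambda\tau}$, which is what makes $\EuScript{S}$ converge, kills the boundary term in the Lyapunov computation, and drives the characteristic $\vp(s)$ to the origin; everything else is bookkeeping.
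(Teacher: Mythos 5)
Your proof is correct and follows essentially the same route as the paper: the method of characteristics along $\vp(s)=\e^{sA^{\intercal}}\vp_{0}$, letting $s\to\infty$ to pin down $H$, and integrating $\frac{\dd}{\dd\tau}\big(\e^{\tau A}\EuScript{B}\e^{\tau A^{\intercal}}\big)$ for the Lyapunov identity. The only differences are additions on your side — you explicitly verify that $\e^{-\langle\vp,\EuScript{S}\vp\rangle}$ does solve \eqref{pde} and you prove uniqueness of the Lyapunov solution rather than citing it — both of which the paper leaves implicit.
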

\begin{proof} We use the method of characteristics.  For all $\vp\in \mathds{R}^d$, we define the function
$\vp(s), s\geq 0$ as the solution of 
\[
\frac{\dd\vp}{\dd s}(s)= A^{\intercal}\vp(s),\;\;\vp(0)=\vp.
\]
Let
\[
\mathfrak{b}(s)=-\sum_{\ell=1}^d B_{\ell} \int_{0}^{s} p_{\ell}(\tau)^{\scaleto{2}{4pt}}\dd\tau.
\]
Let $H$ be a solution of \eqref{pde}. It is easy to check that for all $\vp\in\rpd$ and $s\in\real$ 
\[
\frac{\dd}{\dd s}\left(H\!\left(\hspace{1pt}\vp(s)\right)\e^{\mathfrak{b}(s)}\right)=0.
\]
Integrating from $0$ to $u$ yields
\[
H(\hspace{1pt}\vp)=H\!\left(\hspace{1pt}\vp(u)\right)  \e^{\mathfrak{b}(u)}.
\]
From the spectral properties of $A$ we get
\[
\lim_{u\to+\infty}H\!\left(\hspace{1pt}\vp(u)\right) =H(\ushort{0})=1.
\]
Therefore
\[
H(\hspace{1pt}\vp)=\e^{\mathfrak{b}(\infty)}
\]
and
\[
\mathfrak{b}(\infty)=-\int_{0}^{\infty}\left\langle\,\vp\hspace{1pt},\e^{\tau A}\EuScript{B}\e^{\tau A^{\intercal}}\vp\right\rangle\dd\tau
=-\langle\, \vp\hspace{1pt},\EuScript{S}\vp\rangle.
\]
Finally we get from the spectral properties of $A$
\begin{align*}
A \EuScript{S}+\EuScript{S} A^{\intercal}
&= \int_{0}^{\infty}\left(A\e^{\tau A}\EuScript{B}\e^{\tau A^{\intercal}}+\e^{\tau A}\EuScript{B}\e^{\tau A^{\intercal}} A^{\intercal}\right)
\dd\tau\\
&= \int_{0}^{\infty}\frac{\dd}{\dd\tau}\left(\e^{\tau A}\EuScript{B}\e^{\tau A^{\intercal}}\right)\dd\tau
=-\EuScript{B}.
\end{align*}
This finishes the proof of the lemma.
\end{proof}

\bigskip \noindent {\bf Acknowledgements:}  We thank the two anonymous referees for fruitful comments and suggestions.

\bigskip \noindent {\bf Funding:} 
S. M. has been supported by the Chair ``Mod\'elisation Math\'ematique et Biodiversit\'e'' of Veolia Environnement-Ecole Polytechnique-Museum national d'Histoire naturelle-Fondation X. P. C. and S. M. warmly thank the Basal Conicyt CMM AFD170001 
project.  J.-R. C. and P. C.  also acknowledge the hospitality of the Instituto de F\'{\i}sica de San Luis Potos\'{\i}.


 \end{document}